\documentclass[10pt]{article}
\pdfoutput=1
\usepackage{xcolor}
\usepackage{amsmath}
\usepackage{amssymb}
\usepackage{amsthm}
\usepackage{dsfont}
\usepackage{fullpage}
\usepackage{graphics}
\usepackage{graphicx}
\usepackage{subcaption}
\usepackage{caption}
\usepackage[font=footnotesize,labelfont=bf]{caption}
\usepackage{mathtools}
\usepackage{mathrsfs}
\usepackage{bbm}
\usepackage{multicol}
\usepackage[shortlabels]{enumitem}
\usepackage{pgfplots}
\pgfplotsset{compat=newest}
\usepackage{bm}
\usepackage{tikz}
\usepackage{comment}
\usetikzlibrary{patterns}
\usetikzlibrary{decorations.pathmorphing}
\usetikzlibrary{decorations.pathreplacing,decorations.markings}
\usetikzlibrary{shapes}
\usetikzlibrary{3d}
\usetikzlibrary{arrows.meta}
\usepackage{xcolor}
\usepackage{xifthen}
\usetikzlibrary{decorations}

\newtheorem{thm}{Theorem}
\newtheorem{lm}[thm]{Lemma}
\newtheorem{defn}[thm]{Definition}
\newtheorem{prop}[thm]{Proposition}
\newtheorem{rmk}[thm]{Remark}
\newtheorem{cor}[thm]{Corollary}

\newcommand{\ddbar}[2]{\frac{{\mathrm d}#1}{2\pi {\mathrm i}#2}}
\newcommand{\LL}{\mathrm{L}}
\newcommand{\RR}{\mathrm{R}}
\newcommand{\inn}{-}
\newcommand{\out}{+}
\newcommand{\complexC}{\mathbb{C}}
\newcommand{\intZ}{\mathbb{Z}}
\newcommand{\prob}{\mathbb{P}}
\newcommand{\realR}{\mathbb{R}}
\newcommand{\rC}{\mathrm{C}}
\newcommand{\rd}{\mathrm{d}}
\newcommand{\rI}{\mathrm{I}}
\newcommand{\rx}{\mathrm{x}}
\newcommand{\rf}{\mathrm{f}}
\newcommand{\ri}{\mathrm{i}}
\newcommand{\ch}{\mathrm{ch}}
\newcommand{\bn}{\mathbf{n}}
\newcommand{\1}{\mathbf{1}}
\newcommand{\rT}{\mathrm{T}}
\newcommand{\rS}{\mathrm{S}}
\newcommand{\hu}{\hat{u}}
\newcommand{\rF}{\mathrm{F}}
\newcommand{\E}{\mathbb{E}}
\newcommand{\rX}{\mathrm{X}}
\newcommand{\ftn}{F}
\newcommand{\lbd}{s}
\newcommand{\ubd}{t}
\newcommand{\cc}{p}
\newcommand{\ELPP}{\mathrm{L}}

\newcommand{\annu}{\Omega^{\circledcirc}}
\newcommand{\bbeta}{B}
\newcommand{\qq}{\phi}

\numberwithin{equation}{section} 
\numberwithin{thm}{section}
	
\author{
		Zhipeng Liu\footnote{
        Email: \texttt{zhipengliumath@gmail.com}}
        \and
        Tejaswi Tripathi\footnote{Department of Mathematics, University of Kansas, Lawrence, KS 66045. Email: \texttt{tejaswit@ku.edu}}}
	\title{A determinant identity for the sum of contour integral matrices}
	
\begin{document}
		\maketitle
        \begin{abstract}
        We derive an identity for the determinant of the sum of two $n\times n$ matrices, $U$ and $M$, whose entries are defined via contour integrals. Specifically, we consider $U(i,j)=\frac{1}{2\pi\ri}\oint_\rC \frac{\prod_{\ell=1}^{i-1} (z-\beta_\ell)}{\prod_{\ell=1}^{j} (z-\beta_\ell)} p_i(z)f_j(z)\rd z$ and $M(i,j)= \frac{1}{2\pi\ri}\int_{\Gamma} q_i(z)g_j(z) \rd z$. Under suitable assumptions on the functions $p,q,f,g$, we show that $\det(U+M)$ can be expressed as a Fredholm determinant $\det(\rI +K)$, where $K$ is an integral kernel acting on the contour $\Gamma$.       The kernel $K$ depends on a function $H$ that solves a system of integral equations. When $f_i$ and $g_i$ are specialized to certain rational functions depending on two sets of parameters $(\alpha_\ell)_{\ell\in\intZ}$ and $(\beta_\ell)_{\ell\in\intZ}$, $H$ becomes the characteristic function associated with inhomogeneous directed last passage percolation (DLPP) and inhomogeneous totally asymmetric simple exclusion process (TASEP) models. Furthermore, we obtain an explicit random walk hitting expectation representation of this characteristic function.

        Our work generalizes a recent identity by Baik, Liao, and Liu (2026), which plays an important role in finding the multipoint distribution formula of the periodic KPZ fixed point. Finally, we demonstrate three applications of our general formulas in integrable probability: a new Fredholm determinant formula for the distribution of the path-to-point last passage time in the inhomogeneous DLPP, an indirect proof of a new path-to-line joint distribution formula in the homogeneous DLPP, and a novel proof of the TASEP path-integral formula previously obtained by Matetski, Quastel, and Remenik (2021). 
        \end{abstract}

\section{Introduction}

Very recently, Baik, Liao, and Liu \cite{Baik-Liao-Liu26} derivevd a determinant identity that plays an important role in analyzing the energy function of the periodic totally asymmetric simple exclusion process with a general initial condition. The identity is as follows, modulo a simple conjugation.
\begin{prop}[\cite{Baik-Liao-Liu26}]
\label{prop:BLL}
    Let $f_1,\ldots,f_n$ be analytic functions on a neighborhood $\Omega_0$ of $0$ satisfying $f_i(0)=1$ for all $i$. Let $\Gamma$ be a simple contour contained in the region $\{u\in\complexC: |u+1|<1\}\setminus\{u: |u|<\epsilon\}$ for some positive constant $\epsilon$, and let $g_1,\ldots,g_n$ be bounded continuous functions on $\Gamma$. Suppose there exists a function $H(v,u)$ on $\Omega_0\times \Gamma$ that is analytic at $v=0$ and satisfies 
    \begin{equation}
        \oint_0 \frac{f_i(v)}{v^i} H(v,u) \ddbar{v}{} = g_i(u),\quad \text{for all }1\le i\le n.
    \end{equation}
    Then,
    \begin{equation}
        \det\left[\oint_0 v^{i-j-1}f_j(v) \ddbar{v}{} +\int_{\Gamma} u^{i-1}g_j(u) \ddbar{u}{}\right]_{i,j=1}^n =\det (\rI+K),
    \end{equation}
    where $K$ is an integral kernel acting on $L^2(\Gamma,\rd u/2\pi\ri)$ given by
    \begin{equation}
    \label{eq:specialK}
        K(u_1,u_2) = \oint_0 \frac{u_2^n H(v,u_1)}{v^n(u_2-v)}\ddbar{v}{}.
    \end{equation}
\end{prop}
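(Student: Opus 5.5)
Since $f_j$ is analytic at $0$ with $f_j(0)=1$, the plan is to factor the finite matrix $[\int_\Gamma u^{i-1}g_j(u)\,\ddbar{u}{}]$ through $L^2(\Gamma)$ and apply $\det(\rI+XY)=\det(\rI+YX)$. Write $A(i,j)=\oint_0 v^{i-j-1}f_j(v)\ddbar{v}{}$. This equals the coefficient of $v^{j-i}$ in $f_j$, so $A(i,j)=0$ for $i>j$ and $A(j,j)=f_j(0)=1$. Thus $A$ is upper triangular with unit diagonal, $\det A=1$, and $A$ is invertible. Next define $B:L^2(\Gamma)\to\complexC^n$ by the kernel $B(i,u)=u^{i-1}$, and $C:\complexC^n\to L^2(\Gamma)$ by $C(u,j)=g_j(u)$, both with respect to $\ddbar{u}{}$. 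Then the matrix in the statement is $A+BC$. Hence
\begin{equation*}
\det(A+BC)=\det A\cdot\det(\rI_n+A^{-1}BC)=\det(\rI+CA^{-1}B),
\end{equation*}
where the last determinant is a Fredholm determinant of a finite-rank operator on $L^2(\Gamma,\rd u/2\pi\ri)$. It is well defined because $g_j$ is bounded and $\Gamma$ is compact. The operator $CA^{-1}B$ has kernel $\sum_j g_j(u_1)w_j(u_2)$, where $w(u)=A^{-1}B(\cdot,u)$ is the unique solution of $\sum_j A(i,j)w_j(u)=u^{i-1}$ for $1\le i\le n$.

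The key step is to identify this kernel with \eqref{eq:specialK}. I substitute the hypothesis $g_j(u_1)=\oint_0 f_j(v)v^{-j}H(v,u_1)\ddbar{v}{}$, which gives
\begin{equation*}
\sum_j g_j(u_1)w_j(u_2)=\oint_0 H(v,u_1)\,S(v,u_2)\,\ddbar{v}{},\qquad S(v,u):=\sum_{j=1}^n f_j(v)v^{-j}w_j(u).
\end{equation*}
The function $S(\cdot,u)$ is meromorphic at $0$ with a pole of order at most $n$. Its principal part is therefore $\sum_{i=1}^n c_i(u)v^{-i}$, and $c_i(u)=\oint_0 v^{i-1}S(v,u)\ddbar{v}{}=\sum_j A(i,j)w_j(u)=u^{i-1}$. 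So the defining equations for $w$ say exactly that the principal part of $S$ at $v=0$ is
\begin{equation*}
\sum_{i=1}^n u^{i-1}v^{-i}=\frac{1-(u/v)^n}{v-u}=\frac{u^n}{v^n(u-v)}-\frac{1}{u-v}.
\end{equation*}
We never need to compute $w$ explicitly.

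To finish, note that $H(\cdot,u_1)$ is analytic at $0$. Hence the analytic part of $S$ contributes nothing to $\oint_0 H\,S$. Also, for $u_2\in\Gamma$ we have $|u_2|\ge\epsilon$, so the integration contour around $0$ can be taken inside $|v|<\epsilon$. With that choice, $1/(u_2-v)$ is analytic there and its term integrates to zero as well. What remains is $K(u_1,u_2)=\oint_0 \frac{u_2^nH(v,u_1)}{v^n(u_2-v)}\ddbar{v}{}$, as claimed.

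The main point requiring care is this principal-part computation: recognizing that $A$ is precisely the map taking $w$ to the Laurent coefficients of $S$. The remaining points are routine bookkeeping. One must choose the $v$-contour small enough that it lies inside $\Omega_0$ and excludes $\Gamma$. One must also justify exchanging the finite sum with the integral, which is immediate since the sum is finite.
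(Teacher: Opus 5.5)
Your proof is correct. It reaches the result more directly than the paper does: the paper gives no separate proof of this proposition, but recovers it from Theorem~\ref{prop:main} through the specialization $p_i\equiv1$, $\beta_i=0$, $q_\ell(u)=u^{\ell-1}$ and the simplification \eqref{eq:simplified_kernel2} (with $Q\equiv1$) in Section~\ref{sec:check}.

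The general proof proceeds in three steps:
\begin{enumerate}[(i)]
\item it factors $U=U_0U_1$;
\item it inverts $U_0$ through the nested contour integrals $W$ of Proposition~\ref{lem:inverseofmatrix};
\item it computes the row side $K_2U^{-1}$ by expanding $H(\cdot,u)$ modulo $\prod_{\ell}(v-\beta_\ell)$ in the polynomials $B_m$ dual to $f_j/\prod_{\ell\le j}(v-\beta_\ell)$, whose coefficients turn out to be exactly $g_m(u)$.
\end{enumerate}

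In your setting $U_0=\rI$, since $\oint_0 v^{i-1-j}\,\ddbar{v}{}=\delta_i(j)$, so steps (i) and (ii) collapse. You also resolve $A^{-1}$ on the opposite side: you characterize $A^{-1}B$ through the principal part of $S$, instead of identifying $CA^{-1}$ through the expansion of $H$.

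Underneath, both arguments rest on one observation. The hypothesis reads $g=hA$, where $h_k(u_1)$ are the Taylor coefficients of $H(\cdot,u_1)$ at $0$, so the kernel is $\sum_{k=1}^n h_{k-1}(u_1)u_2^{k-1}$. Your telescoping sum and your removal of the $1/(u_2-v)$ term are then identical to the paper's.

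Your version is shorter and self-contained for this case. The paper's machinery is needed when the $p_i$ are nonconstant. Then the functionals $\oint\prod_{\ell<i}(v-\beta_\ell)p_i(v)(\cdot)\,\ddbar{v}{}$ no longer read off principal-part coefficients directly, and recovering those coefficients amounts to applying $U_0^{-1}=W$.
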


The main goal of this paper is to provide a generalization of the identity above. The main result is as follows. 

\begin{thm}
\label{prop:main}
   Let $n$ be a positive integer. Suppose that $\Omega$ is a simply connected subset of the complex plane containing the points $\beta_1, \ldots, \beta_n$, and $\rC$ is a simple closed contour in $\Omega$ that encloses the points $\beta_1, \ldots, \beta_n$. Let $\Gamma$ be an arbitrary simple contour in the complex plane. Assume that for each $1\le i\le n$, $p_i$ and $f_i$ are analytic functions in $\Omega$ satisfying $p_i(\beta_i)=f_i(\beta_i)=1$, and $p_i(v)\ne 0$ for all $v\in\Omega$. Let $q_i$ and $g_i$ be square integrable continuous functions on $(\Gamma,|\rd u|)$, for all $i$, i.e., $\int_\Gamma |q_i(u)|^2 |\rd u|<\infty$ and $\int_\Gamma |g_i(u)|^2 |\rd u|<\infty$ for all $i$. Suppose there exists a function $H(v,u)$ on $\Omega \times \Gamma$ that is analytic in $v$ and satisfies 
    \begin{equation}
    \label{eq:fgHrel}
        \oint_\rC \frac{f_i(v)}{\prod_{\ell=1}^{i} (v-\beta_\ell)} H(v,u) \ddbar{v}{} = g_i(u),\quad \text{for all }1\le i\le n.
    \end{equation}
     Then we have
    \begin{equation}
    \label{eq:findettoFred}
        \det\left[ \oint_\rC \frac{\prod_{\ell=1}^{i-1} (v-\beta_\ell)}{\prod_{\ell=1}^{j} (v-\beta_\ell)} p_i(v)f_j(v)\ddbar{v}{} + \int_{\Gamma}q_i(u)g_j(u)\ddbar{u}{} \right]_{i,j=1}^n =\det(\rI+K),
    \end{equation}
     where $K$ is an integral kernel acting on $L^2(\Gamma,\rd u/2\pi\ri)$ given by
    \begin{equation}
    \label{eq:def_Kernel_K}
        K(u_1,u_2) = \sum_{\ell=1}^n q_\ell(u_2) \oint_{\rC_1} \ddbar{v_1}{}\cdots \oint_{\rC_\ell}\ddbar{v_\ell}{} \frac{H(v_1,u_1)}{\left[\prod_{i=1}^{\ell-1}(v_i-v_{i+1})\right]\cdot \left[\prod_{i=1}^\ell (v_i-\beta_i)P_i(v_i)\right]},
    \end{equation}
    where $P_1(v):=p_1(v)$, and $P_i(v):=p_i(v)/p_{i-1}(v)$ for $2\le i\le n$. $\rC_1,\ldots,\rC_n$ are nested simple closed contours in $\Omega$ enclosing the points $\beta_1,\ldots,\beta_n$, ordered from outermost to innermost.
\end{thm}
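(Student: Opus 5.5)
The plan is to factor out the matrix $U$ with entries $U(i,j)=\oint_\rC \frac{\prod_{\ell=1}^{i-1}(v-\beta_\ell)}{\prod_{\ell=1}^{j}(v-\beta_\ell)}p_i(v)f_j(v)\ddbar{v}{}$, write the $\Gamma$-part $M$ as a product of two finite-rank operators, and then identify the resulting kernel through a Newton-type interpolation expansion.

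\textbf{Triangularity of $U$.} For $j<i$ the integrand is analytic inside $\rC$, so $U(i,j)=0$. For $j=i$ the integral is the residue of $p_if_i/(v-\beta_i)$ at $\beta_i$, which equals $p_i(\beta_i)f_i(\beta_i)=1$. Hence $U$ is upper unitriangular, $\det U=1$, and $\det(U+M)=\det(\rI_n+U^{-1}M)$.

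\textbf{Reduction to a Fredholm determinant.} Write $M=AB$ with $A:L^2(\Gamma)\to\complexC^n$, $(Ah)_i=\int_\Gamma q_i(u)h(u)\ddbar{u}{}$, and $B:\complexC^n\to L^2(\Gamma)$, $(Bx)(u)=\sum_j g_j(u)x_j$. The square integrability hypotheses make these bounded and of finite rank. Then $\det(\rI_n+U^{-1}AB)=\det(\rI+BU^{-1}A)$, and the operator on the right has kernel
\[
\tilde K(u_1,u_2)=\sum_{\ell=1}^n\Big[\sum_{j=1}^n g_j(u_1)(U^{-1})_{j\ell}\Big]q_\ell(u_2).
\]
So it suffices to show, for each $\ell$, that $\sum_j g_j(u_1)(U^{-1})_{j\ell}=\Phi_\ell[H(\cdot,u_1)]$. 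Here $\Phi_\ell$ is the linear functional on functions $h$ analytic in $\Omega$ given by the nested integral in \eqref{eq:def_Kernel_K} with $H(v_1,u_1)$ replaced by $h(v_1)$.

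\textbf{Identification of the kernel.} Set $L_k[h]=\oint_\rC f_k(v)h(v)\prod_{i\le k}(v-\beta_i)^{-1}\ddbar{v}{}$ and $\psi_\ell(v)=\prod_{i<\ell}(v-\beta_i)\,p_\ell(v)$. Then $g_k(u)=L_k[H(\cdot,u)]$ by \eqref{eq:fgHrel}, and $U(\ell,k)=L_k[\psi_\ell]$. Multiplying the desired identity on the right by $U$, it becomes $L_k[h]=\sum_\ell L_k[\psi_\ell]\,\Phi_\ell[h]$ for all $k\le n$ and all analytic $h$. This will follow from the interpolation expansion
\[
h(v)=\sum_{\ell=1}^n\Phi_\ell[h]\,\psi_\ell(v)+\prod_{i=1}^n(v-\beta_i)\,R(v),\qquad R\text{ analytic in }\Omega,
\]
because $L_k$ annihilates $\prod_{i\le n}(v-\beta_i)R$ for every $k\le n$.

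\textbf{Building the expansion.} Define $h_0=h$ and
\[
h_\ell(w)=\frac{h_{\ell-1}(w)/P_\ell(w)-h_{\ell-1}(\beta_\ell)/P_\ell(\beta_\ell)}{w-\beta_\ell},
\]
which is analytic in $\Omega$ since $p_i\neq0$. Write $c_\ell=h_{\ell-1}(\beta_\ell)/P_\ell(\beta_\ell)$. Unwinding the recursion gives
\[
h=\sum_{\ell} c_\ell\,P_1\cdots P_\ell\prod_{i<\ell}(v-\beta_i)+P_1\cdots P_n\prod_{i\le n}(v-\beta_i)\,h_n,
\]
and $P_1\cdots P_\ell=p_\ell$, so the $\ell$-th term is $c_\ell\psi_\ell$. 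By Cauchy's formula, for $w$ inside $\rC_\ell$,
\[
h_\ell(w)=\oint_{\rC_\ell}\frac{h_{\ell-1}(v)}{P_\ell(v)(v-\beta_\ell)(v-w)}\ddbar{v}{},
\]
and $c_\ell=\oint_{\rC_\ell}\frac{h_{\ell-1}(v)}{P_\ell(v)(v-\beta_\ell)}\ddbar{v}{}$. Iterating with the nested contours $\rC_1\supset\cdots\supset\rC_\ell$, so that each $v_{i+1}$ lies inside $\rC_i$, yields exactly $c_\ell=\Phi_\ell[h]$; the factors $(v_i-v_{i+1})^{-1}$ in \eqref{eq:def_Kernel_K} come from the Cauchy denominators. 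Applying this with $h=H(\cdot,u_1)$, and using the identity $\det(\rI+BU^{-1}A)=\det(\rI+AB\,U^{-1})$-type rearrangement above, gives $K=\tilde K$ and hence \eqref{eq:findettoFred}.

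The main obstacle is this last step: checking carefully that the nested contour representation of the divided-difference coefficients matches \eqref{eq:def_Kernel_K}, including the orientation, sign and order of the factors $(v_i-v_{i+1})$, and the placement of $P_i$. The remaining steps are routine finite-rank determinant manipulations.
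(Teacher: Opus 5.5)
Your proof is correct, and its central step follows a genuinely different route from the paper's.

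\textbf{Shared part.} Both arguments start from the same reduction $\det(U+M)=\det(\rI+K_2U^{-1}K_1)$. Both then expand $H(\cdot,u)$ in $n$ basis functions plus a remainder divisible by $\prod_{\ell=1}^n(v-\beta_\ell)$, which every functional $L_k$ annihilates.

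\textbf{The paper's route.} The paper computes $U^{-1}$ explicitly. It factors $U=U_0U_1$ and proves the nested-contour formula $U_0^{-1}=W$ (Proposition~\ref{lem:inverseofmatrix}), using the contour-reordering Lemma~\ref{lem:change_order_contour} and residue bookkeeping. It handles $U_1^{-1}$ through biorthogonal polynomials $B_i$, in whose basis the coefficients of $H$ are exactly $g_i(u)$.

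\textbf{Your route.} You never invert anything. Right-multiplying by $U$ reduces the claim to $L_k[h]=\sum_\ell L_k[\psi_\ell]\Phi_\ell[h]$. Your divided-difference recursion in the basis $\psi_\ell=p_\ell\prod_{i<\ell}(v-\beta_i)$ then produces the nested integrals directly from iterated Cauchy formulas.

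The step you flagged as the main obstacle checks out. Since $\rC_\ell$ lies inside $\rC_{\ell-1}$, you can substitute the Cauchy representation of $h_{\ell-1}(v_\ell)$ over $\rC_{\ell-1}$, and so on down to $h_0=h$. This gives exactly the denominator $\prod_i(v_i-v_{i+1})\prod_i(v_i-\beta_i)P_i(v_i)$ of \eqref{eq:def_Kernel_K}, with the correct sign and ordering.

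\textbf{Comparison.} Your route is shorter and explains why the kernel has its nested shape: the factors $(v_i-v_{i+1})^{-1}$ are the Cauchy kernels of successive divided differences. It also recovers Proposition~\ref{lem:inverseofmatrix} as the special case $f_j\equiv 1$, $h=\prod_{\ell<i}(v-\beta_\ell)$. The paper's route, in exchange, records explicit inverse formulas for $U_0$ and for $U$ (namely $U^{-1}=U_1^{-1}W$) as stand-alone matrix identities.
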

\begin{rmk}
    The conditions $p_i(\beta_i)=f_i(\beta_i)=1$ can be relaxed to $p_i(\beta_i)\ne 0$ and $f_i(\beta_i)\ne 0$. Under these relaxed conditions, applying Theorem \ref{prop:main} to the normalized functions $\hat p_i(v)=p_i(v)/p_i(\beta_i)$, $\hat f_j(v)=f_j(v)/f_j(\beta_j)$, $\hat q_i(u)=q_i(u)/p_i(\beta_i)$ and $\hat g_j(u)=g_j(u)/f_j(\beta_j)$  yields
    \begin{equation}
        \det\left[ \oint_\rC \frac{\prod_{\ell=1}^{i-1} (v-\beta_\ell)}{\prod_{\ell=1}^{j} (v-\beta_\ell)} p_i(v)f_j(v)\ddbar{v}{} + \int_{\Gamma}q_i(u)g_j(u)\ddbar{u}{} \right]_{i,j=1}^n =\prod_{i=1}^n p_i(\beta_i)f_i(\beta_i)\cdot \det(\rI+K),
    \end{equation}
    with $K$ defined in the same way as in \eqref{eq:def_Kernel_K}.    
\end{rmk}

\begin{rmk}
    The initial draft of this paper was restricted to the homogeneous case ($\beta_i=0$ for $i=1,\ldots,n$). Following the appearance of this identity, Leo Petrov recognized that  the left-hand side of the homogeneous version of \eqref{eq:findettoFred} can be viewed as a tilted Toeplitz minor. This observation led to the derivation of a distinct Fredholm determinant formula in \cite{Petrov2026}, which generalizes the well-known Borodin-Okounkov-Geronimo-Case identity \cite{Geronimo-Case79,Borodin-Okounkov00}. 
\end{rmk}

We further remark that we could replace $\Gamma$ by a general set and $\rd u$ by $\rd \mu(u)$ where $\mu$ is a measure on the set. More explicitly, we have 
\begin{prop}
    Let $\mu$ be a complex measure on a measurable space $(S,\Sigma)$. Theorem \ref{prop:main} holds if we replace $(\Gamma,\rd u)$ by $(S, \rd \mu(u))$.
\end{prop}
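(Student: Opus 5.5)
The idea is to rerun the proof of Theorem~\ref{prop:main} with every $\Gamma$-integral replaced by a $\mu$-integral, and to check that nothing in it uses the geometry of $\Gamma$. The only properties of $(\Gamma,\rd u/2\pi\ri)$ we need are: it is a measure space; the functions $q_i,g_i$ are square integrable, so that finite-rank operators and their Fredholm determinants make sense; and Fubini applies. We first fix the replacements: $q_i,g_i\in L^2(S,|\mu|)$, where $|\mu|$ is the total variation measure; $H(v,u)$ is analytic in $v\in\Omega$ for each $u\in S$ and measurable in $u$; and \eqref{eq:fgHrel} holds for $\mu$-a.e.\ $u$. The matrix entry $M(i,j)=\int_S q_i g_j\,\rd\mu$ is then finite by Cauchy--Schwarz against $|\mu|$.

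The proof of Theorem~\ref{prop:main} should rest on a factorization. Write $U+M=U(\rI+U^{-1}M)$. Here $U$ is triangular with unit diagonal: for $i>j$ the integrand is analytic inside $\rC$, so the entry vanishes, and the diagonal residue at $\beta_i$ is $p_i(\beta_i)f_i(\beta_i)=1$. Hence $\det U=1$. Also $M=QG$, where $G:\complexC^n\to L^2(S)$ is $(Gx)(u)=\sum_j g_j(u)x_j$ and $Q:L^2(S)\to\complexC^n$ is $(Qh)_i=\int_S q_i h\,\rd\mu$. The identity $\det(\rI_n+AB)=\det(\rI+BA)$ then gives $\det(U+M)=\det(\rI+GU^{-1}Q)$, with kernel $K(u_1,u_2)=\sum_{i,j} g_i(u_1)(U^{-1})_{ij}q_j(u_2)$ with respect to $\mu$.

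It remains to evaluate $U^{-1}$ and use \eqref{eq:fgHrel} to express $\sum_i g_i(u_1)(U^{-1})_{i\ell}$ as the nested contour integral in \eqref{eq:def_Kernel_K}. This step is purely algebraic and contour-integral manipulation in the variables $v_1,\ldots,v_\ell$. The only place the $u$-variable enters is pointwise, through $H(\cdot,u_1)$, so we would check that the computation never integrates over $\Gamma$ or deforms it.

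The main obstacle is making sure the operator-theoretic steps survive for a complex measure. The Sylvester identity holds for finite-rank operators on any Hilbert space. We apply it on $L^2(S,|\mu|)$ by writing $\rd\mu=\phi\,\rd|\mu|$ with $|\phi|=1$ and absorbing $\phi$ into $q_i$. The Fredholm determinant of the finite-rank kernel $K$ is then the same whichever of the two conventions is used. The other point to check is that exchanging the $v$-integrals with the $u$-integrals is justified. This follows from the continuity in $v$ on the compact contours together with $q_i,g_i\in L^2$, which bounds everything by Cauchy--Schwarz.
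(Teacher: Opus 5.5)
Your proposal is correct and follows the paper's route. The paper's own justification is the single remark that the proof of Theorem \ref{prop:main} never uses the particular space or measure: the reduction $\det(U+M)=\det(\rI+K_2U^{-1}K_1)$ is finite-rank algebra, and $K_2U^{-1}$ is identified with the nested $v$-integrals of $H(v_1,u)$ pointwise in $u$, via $\sum_m g_m(u)B_m(v)=H(v,u)-J(v,u)$. For the same reason, the interchange of $u$- and $v$-integrals that you list as a remaining check is never needed. That is just as well, since justifying it would require bounds on $H$ in $u$ that are not assumed.
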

The proof is the same as that for Theorem \ref{prop:main} since it does not rely on the explicit space or measure.

\begin{rmk}
    Our identity has the following connection to the well-known identity $\det (\rI +K_1K_2) =\det(\rI+K_2K_1)$. In fact, if $K_1$ and $K_2$ are two operators with kernels $K_1(i,u)=q_i(u)$ for $(i,u)\in \{1,\ldots,n\}\times S$, and $K_2(u,j)=g_j(u)$ for $(u,j)\in S\times \{1,\ldots,n\}$ respectively, then the identity $\det (\rI +K_1K_2) =\det(\rI+K_2K_1)$ says that
    \begin{equation}
        \det\left[ \delta_i(j)  + \int_{S} q_i(u)g_j(u)\rd \mu(u)\right]_{i,j=1}^n = \det (\rI + K_2K_1)
    \end{equation}
    where $K_2K_1$ is an integral kernel acting on $L^2(S,\rd\mu)$ given by
    \begin{equation}
    \label{eq:kernel_BA}
        (K_2K_1)(u_1,u_2) = \sum_{\ell=1}^n q_\ell(u_2)g_\ell(u_1).
    \end{equation}

    The identity \eqref{eq:findettoFred} can be viewed as
    \begin{equation}
        \det(U +K_1K_2) = \det(U)\cdot \det(\rI + K),
    \end{equation}
    where $U$ is an upper triangular matrix, and $K=K_2U^{-1}K_1$. It is generally hard to find the explicit formula for $U^{-1}$ or $K$. The main novelty of our identity is the explicit expression of the kernel $K$ in \eqref{eq:def_Kernel_K} when $U(i,j)=\oint_{\rC} \frac{\prod_{\ell=1}^{i-1} (v-\beta_\ell)}{\prod_{\ell=1}^{j} (v-\beta_\ell)}p_i(v)f_j(v)\frac{\rd v}{2\pi\ri}$.
\end{rmk}

\bigskip

Throughout this paper, we use the following general product notation,
\begin{equation}
    \prod_{i=\lbd}^{\ubd}a_i = \frac{\prod_{i=-L}^\ubd a_i}{\prod_{i=-L}^{\lbd-1}a_i},\quad \text{for all } \lbd,\ubd\in\intZ,
\end{equation}
where $L$ is an integer satisfying  $-L\le \ubd$ and $-L\le \lbd-1$.  Using this notation, we have the following identity
\begin{equation}
\label{eq:product_identity}
    \left(\prod_{i=\lbd}^\ubd a_i \right) \cdot \left(\prod_{i=\ubd+1}^{\lbd -1}a_i\right)=1.
\end{equation}

The following lemma will be used frequently in this paper.

\begin{lm}
\label{lm:telescope}
    Suppose $(a_i)_{i\in\intZ}$ is a sequence  of points, and $z,w$ are two distinct complex numbers in $\complexC\setminus\{a_i:i\in\intZ\}$. Let $\lbd,\ubd,A$, and $B$ be four integers. Assume that $A\le B$. We have 
    \begin{equation}
        \sum_{x=A}^B \frac{\prod_{i=\lbd}^x(z-a_i)}{\prod_{i=\lbd}^{x+1}(w-a_i)} = \frac{1}{z-w} \left[ \prod_{i=\lbd}^{B+1}\frac{ z-a_i}{w-a_i}-\prod_{i=\lbd}^{A}\frac{ z-a_i}{w-a_i}\right],
    \end{equation}
    and
    \begin{equation}
        \sum_{x=A}^B \frac{\prod_{i=x+1}^\ubd(z-a_i)}{\prod_{i=x}^{\ubd}(w-a_i)} = \frac{1}{z-w} \left[ \prod_{i=A}^{\ubd}\frac{ z-a_i}{w-a_i}-\prod_{i=B+1}^{\ubd}\frac{ z-a_i}{w-a_i}\right].
    \end{equation}
\end{lm}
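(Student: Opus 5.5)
We prove the lemma by telescoping, using a partial-fraction identity for each summand. Write $R_x:=\prod_{i=\lbd}^{x}\frac{z-a_i}{w-a_i}$, understood in the generalized product sense introduced above. Since $z\neq a_i$ and $w\neq a_i$, all of these are well defined and nonzero, and for every integer $x$
\begin{equation*}
R_{x+1}=R_x\cdot\frac{z-a_{x+1}}{w-a_{x+1}},
\end{equation*}
regardless of the relative position of $x$ and $\lbd$. This holds because the generalized product is defined as a ratio of products starting from a common index $-L$, and the recursion holds for those.

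\textbf{First identity.} The key step is the elementary computation
\begin{equation*}
R_{x+1}-R_x=R_x\left(\frac{z-a_{x+1}}{w-a_{x+1}}-1\right)=R_x\,\frac{z-w}{w-a_{x+1}}.
\end{equation*}
Also,
\begin{equation*}
\frac{\prod_{i=\lbd}^{x}(z-a_i)}{\prod_{i=\lbd}^{x+1}(w-a_i)}=\frac{R_x}{w-a_{x+1}},
\end{equation*}
because $\prod_{i=\lbd}^{x+1}(w-a_i)=\prod_{i=\lbd}^{x}(w-a_i)\cdot(w-a_{x+1})$ in the generalized notation. Hence each summand equals $(R_{x+1}-R_x)/(z-w)$, which uses $z\neq w$. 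Summing over $A\le x\le B$ telescopes to $(R_{B+1}-R_A)/(z-w)$, which is the claim.

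\textbf{Second identity.} Set $S_x:=\prod_{i=x}^{\ubd}\frac{z-a_i}{w-a_i}$. By the generalized-product convention,
\begin{equation*}
S_x=\frac{z-a_x}{w-a_x}\,S_{x+1}.
\end{equation*}
The summand is
\begin{equation*}
\frac{\prod_{i=x+1}^{\ubd}(z-a_i)}{\prod_{i=x}^{\ubd}(w-a_i)}=\frac{S_{x+1}}{w-a_x},
\end{equation*}
and
\begin{equation*}
S_x-S_{x+1}=S_{x+1}\,\frac{z-w}{w-a_x}.
\end{equation*}
Summing from $A$ to $B$ therefore gives $(S_A-S_{B+1})/(z-w)$, as stated.

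The only point requiring care is checking that the one-step recursions are valid for arbitrary $x$ relative to $\lbd$ and $\ubd$. This includes "empty" or "negative-length" products, which by \eqref{eq:product_identity} are reciprocals. The check is immediate from the definition as a ratio of products starting at a common $-L$, so there is no real obstacle.
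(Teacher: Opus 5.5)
Your proof is correct and follows the paper's approach: both write each summand as $(R_{x+1}-R_x)/(z-w)$ (respectively $(S_x-S_{x+1})/(z-w)$) and telescope. Your one-step recursions, valid for the generalized products because $z,w\notin\{a_i\}$, are exactly the paper's ``$A=B$ case.''
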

\begin{proof}
It is easy to verify both identities when $A=B$. The general identity follows from summing the telescoping sequences. 
\end{proof}

\subsection{Two specializations}
\label{sec:specializations}

In this subsection, we discuss two specializations of Theorem \ref{prop:main}, which are detailed in the following sections. We first show that when $P_i(v)=p_i(v)/p_{i-1}(v)$ is a constant function for $i\ge 2$, the kernel $K$ simplifies to a single contour integral. See the formula \eqref{eq:simplified_kernel}. Then for the second specialization, where $f_i$ and $g_i$ are certain rational functions depending on two sets of parameters $(\alpha_i)_{i\in\intZ},(\beta_i)_{i\in\intZ}$, we identify a function $H$ that satisfies the desired properties in Theorem \ref{prop:main} in terms of an explicit random walk hitting expectation. See Theorem \ref{thm:hitting}.

\subsubsection{Simplification of the kernel when $P_i=\,$constant for all $i\ge 2$}
\label{sec:check}

When $P_i(v)$ is a nonzero constant function for each $i\ge 2$, the $v_i$ integrals in the kernel  \eqref{eq:def_Kernel_K} can be simplified
\begin{equation}
\begin{split}
    \oint_{\rC_1} \ddbar{v_1}{}\cdots \oint_{\rC_\ell}\ddbar{v_\ell}{} \frac{H(v_1,u_1)}{\left[\prod_{i=1}^{\ell-1}(v_i-v_{i+1})\right]\cdot \left[\prod_{i=1}^\ell (v_i-\beta_i) P_i(v_i)\right]}
   &=\oint_{\rC} \ddbar{v_1}{} \frac{H(v_1,u_1)}{p_\ell(v_1)\prod_{i=1}^\ell (v_1-\beta_i)}
\end{split}
\end{equation}
by evaluating the $v_\ell,\ldots,v_2$ integrals consecutively (and deforming the contours to infinity). Then the kernel becomes
\begin{equation}
\label{eq:simplified_kernel}
    K(u_1,u_2) = \oint_\rC \ddbar{v}{} \left[\sum_{\ell=1}^n \frac{q_\ell(u_2)/p_\ell(v)}{\prod_{i=1}^\ell (v-\beta_i)}\right]\cdot H(v,u_1).
\end{equation}

The summation in the above integrand can be further simplified when
\begin{equation}
q_\ell(u)/p_\ell(v)=  Q(u,v)\cdot \prod_{i=1}^{\ell-1} (u-\beta_i),\quad \text{for all }1\le \ell \le n,
\end{equation}
where $Q(u,v)$ is a function independent of $\ell$. Note that $Q(u,v)$ has to be analytic in $v\in\Omega$ since $p_\ell$ is nonzero and analytic in $\Omega $. In such a case, we can use Lemma \ref{lm:telescope} to obtain 
\begin{equation}
\label{eq:simplified_kernel2}
    K(u_1,u_2) = \oint_{\rC} \frac{\rd v}{2\pi\ri} Q(u_2,v )\cdot \left[\prod_{\ell=1}^n \frac{u_2-\beta_\ell}{v-\beta_\ell}-1\right]\cdot \frac{H(v,u_1)}{u_2-v} = \oint_{\rC} \frac{\rd v}{2\pi\ri} \prod_{\ell=1}^n \frac{u_2-\beta_\ell}{v-\beta_\ell} \cdot \frac{Q(u_2,v )H(v,u_1)}{u_2-v}, 
\end{equation}
where we used the fact that $Q(u_2,v)\cdot H(v,u_1)/(v-u_2)$ is analytic for $v\in\Omega$ in the last equation.

In particular, under the assumptions in Proposition \ref{prop:BLL}, we have $p_i(v)\equiv 1$, $\beta_i=0$ for all $i$, and $q_\ell(u)=u^{\ell-1}$. This kernel matches \eqref{eq:specialK}.

\subsubsection{Random walk hitting expectation representation for the inhomogeneous characteristic function}

One important specialization of the function $H$ is the following inhomogeneous characteristic function which arises in the study of inhomogeneous directed last passage percolation (DLPP) \cite{Hong-Liu-Tripathi26} and equivalently, the  totally asymmetric simple exclusion process (TASEP) with inhomogeneous jumping rates. This function carries the information of the boundary condition in the inhomogeneous DLPP, or the initial condition in the inhomogeneous TASEP.

  \begin{defn}
  \label{def:inhomogeneous_ch}
      Let $\boldsymbol{\lambda}=(\lambda_1, \ldots,\lambda_n)\in\intZ^n$ satisfy $\lambda_1\ge  \cdots\ge \lambda_n$. Let $\Omega_\LL$ and $\Omega_\RR$ be two  simply connected and bounded domains on $\complexC$ such that $\mathrm{dist}(\Omega_\LL,\Omega_\RR):=\inf\{|u-v|: u\in\Omega_\LL, v\in\Omega_\RR\}>0$. Assume that $(\alpha_j)_{j\in\intZ}$ and $(\beta_j)_{j\in\intZ}$ are two sequences of points in $\Omega_\LL$ and $\Omega_\RR$ respectively, and the sequence $(\alpha_i)_{i\in\intZ}$ satisfies 
      \begin{equation}
      \label{eq:assumption_beta}
          \left|\frac{u-\alpha_i}{v-\alpha_i}\right|<1-\epsilon,\quad \text{ for all }u\in\Omega_\LL, v\in\Omega_\RR, \text{ and }i\in\intZ.
      \end{equation}
      Here $\epsilon>0$ is a fixed constant.   See Figure \ref{fig:contour_domains} for an illustration.   Suppose $\lbd$ is any fixed integer. 
      We call $\ch =\ch_{\boldsymbol{\lambda}} :\Omega_\RR\times\Omega_\LL\to\complexC$ an \emph{inhomogeneous characteristic function} of $\boldsymbol{\lambda}$ if it satisfies the following two conditions:
      \begin{enumerate}[(a)]
          \item $\ch(v,u)$ is an analytic function of $v$ in $\Omega_\RR$ for any fixed $u\in\Omega_\LL$.
          \item For each $m=1,\ldots,n$, the following integral equation holds
          \begin{equation}
          \label{eq:char_linear_equations}
              \oint_\rC \ftn_m(v) \ch(v,u)\frac{\rd v}{2\pi\ri} = -\ftn_m(u),
          \end{equation}
          where $\rC$ is a positively oriented contour in $\Omega_\RR$ that encloses all the points $\beta_1,\ldots,\beta_n$, and the functions $\ftn_m$ are defined by
          \begin{equation}
              \label{eq:def_ftn}
              \ftn_m(w):= \frac{\prod_{i=\lbd}^{\lambda_m}(w-\alpha_i)}{\prod_{i=1}^m(w-\beta_i)},\quad m=1,\ldots,n.
          \end{equation}
      \end{enumerate}
  \end{defn}

\begin{figure}
\centering
\begin{tikzpicture}[
    % Style for the directed integration contour
    directed/.style={postaction={decorate}, decoration={markings, mark=at position 0.65 with {\arrow{Latex[length=2mm]}}}},
    dot/.style={circle, fill=black, inner sep=1pt}
]

    % 1. Draw the complex plane axes
    \draw[->, thick, gray] (-7, 0) -- (2.5, 0) node[right, text=black] {$\mathrm{Re}$};
    \draw[->, thick, gray] (0, -2.5) -- (0, 2.5) node[above, text=black] {$\mathrm{Im}$};

    % Mark 0 and -1 reference centers
    \coordinate (Origin) at (0,0);
    \coordinate (MinusOne) at (-5,0); % Scaled to x=-3 for clear visual separation
    
    %\node[below right] at (Origin) {$0$};
    %\node[below right] at (MinusOne) {$-1$};

    % 2. Draw the bounded, simply connected domains
    % Domain \Omega_L around -1
    %\fill[blue!5] (MinusOne) circle (1.2);
    \draw[blue, thick] (MinusOne) circle (1.2);
    \node[blue, above left] at (-5.8, 0.8) {$\Omega_\LL$};

    % Domain \Omega_R around 0
    %\fill[red!5] (Origin) circle (1.4);
    \draw[red, thick] (Origin) circle (1.4);
    \node[red, above right] at (1.0, 1.0) {$\Omega_\RR$};

    % 3. Draw contour C inside \Omega_R enclosing the \beta points
    \draw[directed, thick, black] (Origin) circle (0.8);
    \node[black, right] at (0.6, 0.6) {$\rC$};

    % 4. Plot the variables
    % \alpha sequences in \Omega_L
    \node[dot, label={[scale=0.8]above:$\alpha_1$}] at (-4.8, 0.4) {};
    \node[dot, label={[scale=0.8]left:$\alpha_2$}] at (-5.4, 0.1) {};
    \node[dot, label={[scale=0.8]below:$\alpha_3$}] at (-5.1, -0.4) {};
    
    % Arbitrary point u in \Omega_L
    \node[dot, blue, label={[blue, scale=0.9]right:$u$}] at (-4.4, 0.5) {}; 

    % \beta sequences in \Omega_R (strictly inside contour C)
    \node[dot, label={[scale=0.8]above:$\beta_1$}] at (0.2, 0.3) {};
    \node[dot, label={[scale=0.8]left:$\beta_2$}] at (-0.3, 0.1) {};
    \node[dot, label={[scale=0.8]below:$\beta_n$}] at (0.1, -0.3) {};
    
    % Integration variable v moving along contour C
    \node[dot, red, label={[red, scale=0.9]right:$v$}] at (0.48, -0.64) {};
\end{tikzpicture}

\caption{Illustration of the domains $\Omega_\LL,\Omega_\RR$, points $\beta_i,\alpha_i$, and the contour $\rC$}
\label{fig:contour_domains}
\end{figure}

Note that we have one more set of parameters and one more domain in this definition compared to the original $H$ function appeared in Theorem \ref{prop:main}. The domain $\Omega_\RR$ in this definition corresponds to $\Omega$ in Theorem \ref{prop:main}. Moreover, if $f_i(v)=\ftn_i(v)\cdot \prod_{\ell=1}^i(v-\beta_i)$ and $g_i(u) = c\cdot \ftn_i(u)$, where $c$ is a fixed constant, the $H$ function appearing in Theorem \ref{prop:main} can be expressed as $H(v,u)=-c\cdot \ch(v,u)$.

  We remark that the homogeneous version of this characteristic function, when $\alpha_i=-1$ and $\beta_i=0$ for all $i$, and when $\lbd=0$, has appeared in the DLPP with homogeneous weights and the homogeneous TASEP \cite{Liu22a,Liao-Liu25,Baik-Liao-Liu26}, and it is called the characteristic function of the initial condition $Y=(y_i=\lambda_i-i+1)_{i\ge 1}$ for TASEP.

\bigskip

The main goal of this subsection is to provide an inhomogeneous characteristic function which can be explicitly expressed as a random walk hitting expectation. A similar expression for the homogeneous case has been obtained in \cite{Liao-Liu25,Baik-Liao-Liu26}, which plays an important role in the asymptotic analysis of the TASEP and the periodic TASEP with general initial conditions. We expect our expression for the inhomogeneous characteristic function to play an analogous role in the inhomogeneous setting.

  The explicit formula of this inhomogeneous characteristic function involves a discrete Markov chain $(G_k)_{k\ge 0}$ with complex-valued transition probabilities defined as follows 
  \begin{equation}
      \label{eq:transition_prob}
      \prob(G_{k}=x\mid G_{k-1}=y) = \begin{dcases}
          (\beta_{k}-\cc)\frac{\prod_{i=x+1}^y(\cc-\alpha_i)}{\prod_{i=x}^y (\beta_k-\alpha_i)}, & x\le y,\\
          0,& x>y.
      \end{dcases}
  \end{equation}
  Here $\cc$ is a parameter satisfying
  \begin{equation}
  \label{eq:cc_assumption}
      \left|\frac{\cc-\alpha_i}{\beta_j-\alpha_i}\right|<1-\epsilon,\quad \text{for all}\quad i,j,
  \end{equation}
  with some positive constant $\epsilon$. Here we slightly abuse the notation $\epsilon$, which appears in both \eqref{eq:assumption_beta} and \eqref{eq:cc_assumption} but they may denote two different constants.
   We will verify the following normalization identity in Section \ref{sec:verification_transition}, hence the formula \eqref{eq:transition_prob} gives a (complex-valued) probability measure.
  \begin{equation}
  \label{eq:sum_normalized}
      \sum_{x\in\intZ}\prob(G_{ k}=x\mid G_{k-1}=y)=1,\quad y\in\intZ.
  \end{equation}
  
  We define the stopping time $\tau:=\min\{m\ge 0: G_m>\lambda_{m+1}\}$.  
  \begin{thm}
  \label{thm:hitting}
      The following formula is well-defined and is an inhomogeneous characteristic function of $\boldsymbol{\lambda}$:
      \begin{equation}
          \label{eq:ch_hitting_exp}
          \ch(v,u) = \sum_{x\in\intZ} \prod_{i=\lbd}^{x-1}(u-\alpha_i)\cdot \E_{G_0=x}\left[ \frac{\prod_{j=1}^\tau (v-\beta_j)}{\prod_{i=\lbd}^{G_\tau}(v-\alpha_i)}\frac{1}{\prod_{i=1}^\tau(\cc-\beta_i) \cdot \prod_{j=G_\tau+1}^x (\cc-\alpha_j)}\1_{\tau<n}\right].
      \end{equation}
  \end{thm}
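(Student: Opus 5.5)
Three things have to be checked: the series converges and is analytic in $v\in\Omega_\RR$, which gives condition (a); the transition law is normalized as in \eqref{eq:sum_normalized}; and the integral equations \eqref{eq:char_linear_equations} hold. For well-definedness, I will expand the expectation as a sum over paths $x=G_0\ge G_1\ge\cdots\ge G_\tau$. Along such a path the factors $\prod_{i=G_\tau+1}^x(\cc-\alpha_i)^{-1}$ cancel the numerators $\prod_{i=x'+1}^{y}(\cc-\alpha_i)$ of the transition weights, and the factors $(\beta_k-\cc)$ cancel against $\prod(\cc-\beta_i)^{-1}$ up to sign. What remains is a product of ratios of the form $(u-\alpha_i)/(v-\alpha_i)$ and $1/(\beta_k-\alpha_i)$. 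By \eqref{eq:assumption_beta} and \eqref{eq:cc_assumption} these decay geometrically in the lengths of the jumps and in $x\to+\infty$. For $x\to-\infty$, the path stops immediately ($\tau=0$) once $x>\lambda_1$ fails in the other direction, or it contributes only for $x$ bounded below by the $\lambda$'s, since $G_\tau>\lambda_{\tau+1}\ge\lambda_n$ and $G$ is nonincreasing. The sum then converges uniformly on compacts, and analyticity in $v$ follows. The normalization \eqref{eq:sum_normalized} is a geometric-type telescoping sum in $x\le y$, which I will handle by the second identity of Lemma \ref{lm:telescope} with $z=\cc$, $w=\beta_k$ and $A\to-\infty$. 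The boundary term vanishes by \eqref{eq:cc_assumption}.

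The core step is the integral equation. I will fix $m$ and integrate $\ftn_m(v)\ch(v,u)$ over $\rC$ term by term, which the uniform convergence justifies. For a path stopped at $\tau=k<n$, $G_k=y$, the $v$-integrand is
\[
\frac{\prod_{i=\lbd}^{\lambda_m}(v-\alpha_i)}{\prod_{i=\lbd}^{y}(v-\alpha_i)}\cdot\frac{1}{\prod_{i=k+1}^m(v-\beta_i)}.
\]
If $k\ge m$, this is analytic inside $\rC$ and the integral is zero. If $k<m$ and $y\ge\lambda_m$, the $\alpha$-part is a polynomial-free ratio with no poles inside $\rC$ apart from the $\beta$'s, so the integral is a residue sum at $\beta_{k+1},\dots,\beta_m$. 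I expect this residue sum to be expressible as a ``one-step-further'' quantity, namely the transition kernel weight summed against future steps. The plan is a first-step (martingale/harmonic) argument. Define
\[
h_k(y):=\oint_\rC \frac{\prod_{i=\lbd}^{\lambda_m}(v-\alpha_i)}{\prod_{i=\lbd}^{y}(v-\alpha_i)\prod_{i=k+1}^m(v-\beta_i)}\ddbar{v}{}.
\]
I will show that $k\mapsto h_k(G_k)\prod_{i\le k}(\cc-\beta_i)^{-1}\prod_{j=G_k+1}^{x}(\cc-\alpha_j)^{-1}$ is a martingale for the chain. This amounts to verifying
\[
\sum_{x'\le y}\prob(G_{k+1}=x'\mid G_k=y)\,\frac{h_{k+1}(x')}{\prod_{j=x'+1}^{y}(\cc-\alpha_j)}=(\cc-\beta_{k+1})\,h_k(y).
\]
Deforming the contour and using the first identity of Lemma \ref{lm:telescope}, this reduces to a partial-fraction identity in $v$ at the pole $\beta_{k+1}$.

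Optional stopping at $\tau\wedge n$ then converts $\oint\ftn_m\ch$ into $\sum_x\prod_{i=\lbd}^{x-1}(u-\alpha_i)\,h_0(x)$ minus the contributions of paths that are not stopped before time $m$. The latter vanish because $h_k\equiv 0$ for $k\ge m$. Finally, $\sum_x\prod_{i=\lbd}^{x-1}(u-\alpha_i)h_0(x)$ is evaluated by exchanging the sum and the integral. The sum over $x$ is geometric, again by Lemma \ref{lm:telescope} and \eqref{eq:assumption_beta}, and produces $\oint \ftn_m(v)/(u-v)\,\ddbar{v}{}$ restricted to the terms $x>\lambda_m$. This equals $-\ftn_m(u)$, because $u$ lies outside $\rC$ and $\ftn_m(v)/(u-v)$ decays at infinity once the correct $x$-range is used.

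I expect the main obstacle to be the bookkeeping in the martingale identity. The difficulty is matching the stopping rule $G_k>\lambda_{k+1}$ with the region where the integrand is pole-free, so that stopped terms contribute exactly the right boundary pieces. I will first try to verify it in the homogeneous case ($\alpha_i=-1$, $\beta_i=0$) against the known formula of \cite{Liao-Liu25,Baik-Liao-Liu26} to fix signs.
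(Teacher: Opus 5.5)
\textbf{There is a genuine gap in your core step: the process you call a martingale,} $M_k:=h_k(G_k)\prod_{i\le k}(\cc-\beta_i)^{-1}\prod_{j=G_k+1}^{x}(\cc-\alpha_j)^{-1}$, \textbf{is not one.}

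Your one-step identity sums over all $x'\le y$. For $x'\le\lambda_m$, however, $h_{k+1}(x')=\oint_\rC \frac{\prod_{i=x'+1}^{\lambda_m}(v-\alpha_i)}{\prod_{i=k+2}^{m}(v-\beta_i)}\frac{\rd v}{2\pi\ri}$ does not vanish. The telescoping you intend requires summing $\sum_{x'\le y}\prod_{i=x'+1}^{y}(v-\alpha_i)/\prod_{i=x'}^{y}(\beta_{k+1}-\alpha_i)$ inside the integral, and this series does not converge on $\rC$.
\begin{itemize}
\item No hypothesis gives $|v-\alpha_i|<|\beta_{k+1}-\alpha_i|$ for $v\in\rC$. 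In the homogeneous case it would mean $|v+1|<1$ on a contour around $0$, which is impossible.
\item In the homogeneous case, for $k+1\le m-1$ and $x'\le\lambda_m$, one has $h_{k+1}(x')=\binom{\lambda_m-x'}{m-k-2}$. The transition weights supply no decay in $x'$, so $\E[M_{k+1}\mid G_k=y]$ is a divergent series.
\item At the step $m-1\to m$ the identity is outright false. Since $h_m\equiv 0$, your left side is $0$, while the right side is $(\cc-\beta_m)h_{m-1}(y)=(\cc-\beta_m)\prod_{i=y+1}^{\lambda_m}(\beta_m-\alpha_i)\neq 0$.
\end{itemize}
Consistently, optional stopping for $M$ would give $\E_{G_0=x}[M_\tau\1_{\tau<n}]=h_0(x)$ for every $x$. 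This fails for $x\le\lambda_n$, where the left side is $0$; for example, $n=m=1$ gives $h_0(x)=\prod_{i=x+1}^{\lambda_1}(\beta_1-\alpha_i)\ne0$. Moreover, $\sum_{x\in\intZ}\prod_{i=\lbd}^{x-1}(u-\alpha_i)h_0(x)$ diverges as $x\to-\infty$, already in the homogeneous case. Your last step restricts to $x>\lambda_m$, but nothing in your construction produces that restriction.

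\textbf{The missing idea is to truncate at the fixed level $\lambda_m$.} The cutoff is tied to the equation index $m$, not to the stopping rule $G_k>\lambda_{k+1}$. Set $\hat M_k:=M_k\1_{G_k>\lambda_m}$.
\begin{itemize}
\item $\E[\hat M_k\mid\mathcal F_{k-1}]$ is then a finite sum over $\lambda_m<x'\le G_{k-1}$.
\item Lemma \ref{lm:telescope} gives $\hat M_{k-1}$ minus a boundary term. That term is a multiple of $\1_{G_{k-1}>\lambda_m}$ times the integral of the $h_{k-1}(G_{k-1})$-integrand multiplied by $\prod_{i=\lambda_m+1}^{G_{k-1}}(v-\alpha_i)$.
\item In this integrand all $\alpha$-factors cancel. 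What remains is a constant times $\oint_\rC\prod_{j=k}^{m}(v-\beta_j)^{-1}\frac{\rd v}{2\pi\ri}$, which vanishes exactly when $k\le m-1$.
\end{itemize}
So $(\hat M_k)_{0\le k\le m-1}$ is a martingale, and you must stop at $\tau\wedge(m-1)$ rather than $\tau\wedge n$.
\begin{itemize}
\item On $\{\tau\ge m\}$ you have $G_{m-1}\le\lambda_m$, hence $\hat M_{m-1}=0$. This replaces your ``$h_k\equiv0$ for $k\ge m$''.
\item On $\{\tau\le m-1\}$ you have $G_\tau>\lambda_{\tau+1}\ge\lambda_m$, so the indicator equals $1$.
\item The initial value $\hat M_0=\1_{x>\lambda_m}h_0(x)$ supplies exactly the restriction $x>\lambda_m$ that makes the final geometric sum converge by \eqref{eq:assumption_beta}.
\end{itemize}

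The rest of your outline is fine. For well-definedness, contributing paths are confined to $(\lambda_n,x]$, so no geometric decay in jump lengths is needed (none is available in general). The normalization and the final residue at $v=u$ also work. One slip: the last integrand should be $\ftn_m(v)\frac{1}{v-u}\prod_{i=\lbd}^{\lambda_m}\frac{u-\alpha_i}{v-\alpha_i}$, not $\ftn_m(v)/(u-v)$.
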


\begin{rmk}
    Random walk hitting expectations have emerged in the correlation kernel of equal-time multipoint distributions for TASEP and the KPZ fixed point \cite{Matetski-Quastel-Remenik21}. Subsequent generalizations \cite{Matetski-Remenik23,Bisi-Liao-Saenz-Zygouras23,Matetski-Remenik25} have also featured random walk hitting expectations in their correlation kernels. In particular, the studies \cite{Bisi-Liao-Saenz-Zygouras23,Matetski-Remenik25} investigated inhomogeneous TASEP, where the underlying random walks are geometric. It is unclear how to directly relate those geometric random walk representations to the random walk defined in the present work.

    From a different perspective, similar random walk hitting expectations have appeared in the characteristic functions of both standard and periodic TASEP \cite{Liao-Liu25,Baik-Liao-Liu26} with homogeneous rates, and are crucial for identifying the space-time joint distributions of these two models. Theorem \ref{thm:hitting} can be viewed as a generalization of the characteristic function formula in \cite{Liao-Liu25}.
\end{rmk}
\begin{rmk}\label{rmk:step_IC}
    When $\lambda_1=\cdots=\lambda_n$, either $\tau=0$ or $\tau\ge n$ holds since the random walk always weakly decreases. $\tau=0$ if and only if $G_0=x>\lambda_1$. Thus we can simplify the summation in \eqref{eq:ch_hitting_exp} and obtain 
    \begin{equation}
        \ch(v,u) = \sum_{x=\lambda_1+1}^\infty \frac{\prod_{i=\lbd}^{x-1}(u-\alpha_i)}{\prod_{i=\lbd}^{x}(v-\alpha_i)} = \frac{1}{v-u} \prod_{i=\lbd}^{\lambda_1}\frac{u-\alpha_i}{v-\alpha_i},
    \end{equation}
    where we used Lemma \ref{lm:telescope} and the assumption \eqref{eq:assumption_beta} in the last step.
\end{rmk}

  For the homogeneous case  $\alpha_i=-1$ and $\beta_i=0$,  $G_k$ is a geometric random walk with the transition probability $\prob(G_k=x\mid G_{k-1}=y)=(-\cc)\cdot (\cc+1)^{y-x}\1_{x\le y}$ with the parameter $\cc<0$. \eqref{eq:ch_hitting_exp} becomes 
  \begin{equation}
  \label{eq:homogeneous_characteristic_function}
      \ch(v,u) = \sum_{x\in\intZ} \left(\frac{u+1}{\cc+1}\right)^{x-\lbd} \E_{G_0=x} \left[ \frac{1}{\cc+1}\left(\frac{\cc+1}{v+1}\right)^{G_\tau-\lbd+1} \left(\frac{v}{\cc}\right)^\tau\1_{\tau<n}\right].
  \end{equation}
  If we set $y_i=\lambda_i-i+1$, $\lbd=0$, and $\cc=-\rho$, this function matches the characteristic function associated with $Y=(y_i)_{i\ge 1}$ in \cite[Definition 5.5]{Baik-Liao-Liu26} (and \cite[Theorem 3.4]{Liao-Liu25} if we further take $\cc=-1/2$) modulo a shift in the definition of the random walk $\hat G_k=G_k-k$. 

\medskip

 In Section \ref{sec:well-definedness-hitting}, we will show that the summation on the right-hand side of \eqref{eq:ch_hitting_exp} is absolutely convergent, and hence it is well-defined. The proof of Theorem \ref{thm:hitting} is given in Section \ref{sec:proof-hitting}.

\subsection{Applications}

We expect that Theorem \ref{prop:main} will have many applications in integrable probability, particularly for models with determinantal structures. Indeed, as shown in \cite{Baik-Liao-Liu26}, a special case of this result (Proposition \ref{prop:BLL}) already plays an important role in determining the multipoint distributions of the periodic KPZ fixed point with general initial conditions. In this subsection, we present Theorem \ref{thm:LPP-path-to-point} as an illustrative example to demonstrate one application of Theorem \ref{prop:main}. Two other applications are discussed in Section \ref{sec:applications}. 

Consider the inhomogeneous DLPP model. Assume $w_{i,j}$,  $(i,j)\in\intZ^2$, are independent exponential random variables with parameters  $-\alpha_i+\beta_j$, i.e.,
\begin{equation}
\begin{split}
    \prob( w_{i,j} \le x) = (1- e^{-(-\alpha_i+\beta_j)x})\cdot\1_{x\ge 0},\quad x\in\realR.
\end{split}
\end{equation}
Here we assume that  $\alpha_i<\beta_j$ for all $i,j\in\intZ$, so that the rates for these random variables are positive. Define the point-to-point last passage time from $\mathbf{p} \in\intZ^2$ to $\mathbf{q}\in \intZ^2$ as follows
\begin{equation}
    \ELPP(\mathbf{p};\mathbf{q}):= \max_{\pi} \sum_{\mathbf{r}\in\pi}  w_{\mathbf{r}},
\end{equation}
where the maximum is taken over all up/right lattice paths from $\mathbf{p}$ to $\mathbf{q}$. If such a path does not exist, we define $\ELPP(\mathbf{p};\mathbf{q})=-\infty$. We can also define the set-to-point last passage time from a set $S\subseteq \intZ^2$ to $\mathbf{q}\in\intZ^2$ by
\begin{equation}
    \ELPP (S;\mathbf{q}) = \max_{\mathbf{p}\in S}\ELPP (\mathbf{p};\mathbf{q}).
\end{equation}

Let $\boldsymbol{\lambda}=(\lambda_i)_{i\in\intZ_+}$ be a decreasing sequence of integers, $\lambda_1\ge \lambda_2 \ge\cdots$. For notation simplicity, we denote
\begin{equation}
    \ELPP_{\boldsymbol{\lambda}}(\mathbf{q}) = \ELPP( \{(\lambda_i+1,i): i\in\intZ_+\}; \mathbf{q}),
\end{equation}
and, for a fixed $n\ge 1$,
\begin{equation}
    \ELPP_{\boldsymbol{\lambda}}(m) = \left( \ELPP_{\boldsymbol{\lambda}}(m,1),\ldots, \ELPP_{\boldsymbol{\lambda}}(m,n)\right).
\end{equation}
Define $W_n:=\{(x_1,\ldots,x_n)\in\realR^n: x_n\ge \cdots\ge x_1\ge 0\}$. Then  $(\ELPP_{\boldsymbol{\lambda}}(m))_{m\ge \lambda_1+1}$ is a Markov process in the space $W_n$, and corresponds to the last passage times from a general initial path to a single column. The following formula for the density of $\ELPP_{\boldsymbol{\lambda}}(m)$ has been derived in \cite{Hong-Liu-Tripathi26}\footnote{An analogous formula for the inhomogeneous geometric DLPP was also derived in \cite{Hong-Liu-Tripathi26}.}.
\begin{thm}[\cite{Hong-Liu-Tripathi26}]
    \label{thm:LPP_path-to-line}
    For $m\ge \lambda_1+1$, we have the following determinantal formula for  $\ELPP_{\boldsymbol{\lambda}}(m)$,
    \begin{equation}
    \label{eq:LPP_path-to-line}
    \begin{split}
        \prob\left( \ELPP_{\boldsymbol{\lambda}}(m) \in \rd x\right) &=\det\left[ M_{i,j}\right]_{i,j=1}^n \rd x,\quad x=(x_1,\ldots,x_n)\in  W_n,
    \end{split}
    \end{equation}
    where
    \begin{equation}
    \begin{split}    
    M_{i,j} &= \oint_{|z|=R} \frac{\rd z}{2\pi\ri}  e^{x_i(z-\beta_i)}
    \frac{\prod_{k=1}^i (z-\beta_k)}{\prod_{k=1}^j (z-\beta_k)} \prod_{k=\lambda_j+1  }^m \frac{\beta_j-\alpha_k}{z-\alpha_k},
    \end{split}
\end{equation}
and $R$ is a large positive number such that the integration contour encloses all possible finite poles in the integrand.
\end{thm}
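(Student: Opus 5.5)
The plan is to prove \eqref{eq:LPP_path-to-line} by induction on the column index $m$. I will use the recursion
\[
\ELPP_{\boldsymbol{\lambda}}(m+1,j)=\max\bigl(\ELPP_{\boldsymbol{\lambda}}(m+1,j-1),\ELPP_{\boldsymbol{\lambda}}(m,j)\bigr)+w_{m+1,j},
\]
with the convention $\ELPP_{\boldsymbol{\lambda}}(m+1,0)=-\infty$. Only rows $j$ with $\lambda_j\le m$ satisfy this; a row $j$ with $\lambda_j=m$ is instead started from its boundary point $(m+1,j)$.

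\textbf{Entries as functions of $x$.} It is convenient to write $M_{i,j}=M_{i,j}^{(m)}(x_i)$ and to note two properties.
\begin{itemize}
\item Column operations in $j$ are visible from the factor $\prod_{k=1}^i(z-\beta_k)/\prod_{k=1}^j(z-\beta_k)$: it lets one pass between neighbouring columns by multiplying the integrand by $z-\beta_j$.
\item Row operations are visible through $e^{x_i(z-\beta_i)}$: one has $\partial_{x_i}+\beta_i\leftrightarrow z$.
\end{itemize}
With the product convention \eqref{eq:product_identity}, the formula also makes sense for $m<\lambda_j$. There $\prod_{k=\lambda_j+1}^m \frac{\beta_j-\alpha_k}{z-\alpha_k}$ becomes a polynomial factor $\prod_{k=m+1}^{\lambda_j}\frac{z-\alpha_k}{\beta_j-\alpha_k}$. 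This suggests treating the entry of row $j$ at time $m=\lambda_j$ uniformly with the others.

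\textbf{Base case.} Take $m=\lambda_n$ (or a suitably degenerate configuration) and check directly that the determinant gives the correct density. Rows not yet started then contribute contour integrals whose residue at infinity reproduces the deterministic boundary behaviour.

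\textbf{One-column transition.} The core step is a Warren/Schütz-type statement: the one-column transition density from $x\in W_n$ to $y\in W_n$ is itself a determinant,
\[
\det\Bigl[\oint \tfrac{\rd z}{2\pi\ri}\, e^{(y_i-x_j)(z-\beta_i)}\,\tfrac{\prod_{k=1}^{i}(z-\beta_k)}{\prod_{k=1}^{j}(z-\beta_k)}\,\tfrac{\beta_j-\alpha_{m+1}}{z-\alpha_{m+1}}\Bigr],
\]
up to the boundary conventions. I will verify this by checking that it satisfies the backward equation in $y$ for the sequential max-plus-exponential update. The checks are:
\begin{itemize}
\item the rate $\beta_j-\alpha_{m+1}$ in row $j$;
\item the reflection boundary conditions at $y_j=y_{j-1}$;
\item the initial condition at $y=x$.
\end{itemize}
The reflection condition follows from the column operation above: $(\partial_{y_j}+\beta_j)$ applied to column $j$ produces column $j-1$, so row/column cancellations occur when $y_j=y_{j-1}$.

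\textbf{Composition.} Compose the transition with the induction hypothesis using the Cauchy--Binet/Andr\'eief identity on $W_n$. Integrating $x_j$ against $e^{-x_j(\cdot)}$ multiplies each entry by the factor $\frac{\beta_j-\alpha_{m+1}}{z-\alpha_{m+1}}$, and this updates $\prod_{k=\lambda_j+1}^{m}$ to $\prod_{k=\lambda_j+1}^{m+1}$. Rows that start at column $m+1$ are absorbed automatically by the product convention.

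\textbf{Main obstacle.} I expect the hardest step to be the boundary terms in this composition. Integrating over the ordered chamber $W_n$ rather than $\realR^n$ produces boundary contributions at $x_j=x_{j+1}$ and at $x_1=0$, and these must be shown to vanish. I would first try to show they cancel via the same adjacent-column identity. The fallback is to extend the integrals to a full product domain by an antisymmetrization argument, choosing the contour radius $R$ large so that every pole, including $\alpha_{m+1}$, is enclosed and the exponential factors decay in the relevant directions.
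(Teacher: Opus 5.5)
\textbf{Comparison with the paper.} The paper does not prove this statement itself. It quotes it from \cite{Hong-Liu-Tripathi26}, and its only related argument is the indirect derivation in Section~\ref{subsec:equivalence_formulas_column}: the homogeneous, integrated version \eqref{eq:Llambda-schutz} is obtained from the Fredholm determinant of \cite{Liu22a} via Theorem~\ref{prop:main} with $H=-\ch$. A direct induction in $m$ is a reasonable alternative route.

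\textbf{The gap: the base case and rows that start later.} Your version breaks exactly where new rows start. For $m\le\lambda_1$, the product convention turns $\prod_{k=\lambda_1+1}^m\frac{\beta_1-\alpha_k}{z-\alpha_k}$ into the polynomial $\prod_{k=m+1}^{\lambda_1}\frac{z-\alpha_k}{\beta_1-\alpha_k}$. The factor $\prod_{k=1}^i(z-\beta_k)/(z-\beta_1)$ is also a polynomial. So every entry $M_{i,1}$ of the first column is the integral of an entire function over $|z|=R$, and it vanishes. Hence $\det[M_{i,j}]\equiv 0$ for every $m\le\lambda_1$, in particular at your base case $m=\lambda_n$; the ``residue at infinity'' is also zero here. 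Integrating the zero function against any transition kernel gives zero, so your induction can never reach the nonzero density at $m=\lambda_1+1$. Rows that start at column $m+1$ are \emph{not} absorbed automatically by the product convention. Already for $n=1$ the formula gives $0$ at $m=\lambda_1$, whereas the correct input to the next step is $\delta_0$: the new row starts from the value $0$, not from an extended density.

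\textbf{What is needed: an explicit entry step.} One workable setup is to induct on the active rows $A_m=\{j:\lambda_j\le m-1\}=\{j^*,\dots,n\}$. Their values have density $\det[M_{i,j}]_{i,j\in A_m}$, which is the theorem for the relabelled rows. You would then prove separately that the determinant enlarges correctly when the rows with $\lambda_j=m$ enter via $\ELPP_{\boldsymbol\lambda}(m+1,j)=\max(\ELPP_{\boldsymbol\lambda}(m+1,j-1),0)+w_{m+1,j}$. This needs a real computation, because the new column is not triangular. For example, for $i\ge j^*$,
$$M_{i,j^*-1}=(\beta_{j^*-1}-\alpha_{m+1})\,e^{y_i(\alpha_{m+1}-\beta_i)}\prod_{k=j^*}^{i}(\alpha_{m+1}-\beta_k)\neq0.$$
You must check that eliminating this column against the new row reproduces the boundary terms coming from the old rows.

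\textbf{Your ``main obstacle'' can be avoided.} You do not need the determinantal one-step kernel. You leave it unspecified ``up to boundary conventions'', and a backward equation with initial condition at $y=x$ does not fit a single max-plus step anyway. When all rows are active, the one-step density is the explicit product
$$\prod_i r_i\,e^{-r_i(y_i-\max(y_{i-1},x_i))}\,\mathbf{1}_{\{x_i<y_i\}},\qquad r_i=\beta_i-\alpha_{m+1},\quad y_0:=-\infty.$$
Put the $i$-th factor into row $i$ of $\det[M_{i,j}(x_i)]$ and integrate $x_n,\dots,x_1$ over $[x_{i-1},y_i]$, as in the proof of Theorem~\ref{thm:LPP-path-to-point}.
\begin{itemize}
\item The lower-limit terms are multiples of row $i-1$ and can be dropped.
\item The breakpoint $x_i=y_{i-1}$ contributes, via $\frac{1}{z-\beta_i}-\frac{1}{z-\alpha_{m+1}}=\frac{r_i}{(z-\beta_i)(z-\alpha_{m+1})}$, a multiple of the updated row $i-1$. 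This is removed by row operations.
\item The $x_1=0$ term vanishes, because its integrand is $O(z^{-2})$ when $m\ge\lambda_1+1$.
\item Finally, move the row factors $r_i$ onto the columns.
\end{itemize}
With this route, no Andr\'eief identity on $W_n$ and no chamber boundary analysis is needed, and the entry step is the only real work left.
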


We remark that when $\lambda_i=0$ for all $i$, this formula is a special case of the column-to-column transition probability formula \cite[Corollary 3.1]{Johansson-Rahman22} (modulo the negative signs of the parameters $\alpha_i$). For general ${\boldsymbol{\lambda}}$, there is no other single determinantal formula for the joint distribution of the path-to-column last passage times, even in the homogeneous case. There are also Fredholm determinantal formulas in the language of TASEP (see \cite{Liu22a,Liao21}), given in terms of $(n-1)$-fold of contour integrals of a complicated Fredholm determinant. In fact, the original motivation to derive our main Theorem \ref{prop:main} in this paper comes from a different project \cite{Liu-Tripathi26}. In that work, we need a simple formula of the path-to-column last passage times to study the path-to-path geodesic, generalizing the approach used for point-to-point geodesics in \cite{Liu22b}. However, after deriving a homogeneous version of Theorem \ref{thm:LPP_path-to-line} using Theorem \ref{prop:main}, we discovered a straightforward proof and its inhomogeneous generalization. Since the direct proof is irrelevant to the topic of this paper, it will be provided in \cite{Hong-Liu-Tripathi26}.  Later in Section \ref{subsec:equivalence_formulas_column}, we will also provide the indirect derivation of the homogeneous degeneration of this formula using the Fredholm determinant formula in  \cite{Liu22a}, as an application of Theorem \ref{prop:main}.

Using Theorem \ref{thm:LPP_path-to-line} and Theorem \ref{prop:main}, it is straightforward to obtain the following result.
\begin{thm}
\label{thm:LPP-path-to-point}
Assume that $\boldsymbol{\lambda}=(\lambda_1, \ldots,\lambda_n)\in\intZ^n$ satisfies $\lambda_1\ge  \cdots\ge \lambda_n$, and $m\in\intZ$ satisfies $m\ge \lambda_1+1$. Let $\Omega_\LL$ and $\Omega_\RR$ be two  bounded, simply connected domains in $\complexC$ such that $\mathrm{dist}(\Omega_\LL,\Omega_\RR)>0$. Suppose $(\alpha_j)_{j\in\intZ}$ and $(\beta_j)_{j\in\intZ}$ are two sequences of points in $\Omega_\LL$ and $\Omega_\RR$ respectively, such that $\alpha_i<\beta_j$ for all $i,j\in\intZ$. Moreover, there exists some fixed constant $\epsilon>0$, such that
      \begin{equation}
          \left|\frac{u-\alpha_i}{v-\alpha_i}\right|<1-\epsilon,\quad \text{ for all }u\in\Omega_\LL, v\in\Omega_\RR, \text{ and }i\in\intZ.
      \end{equation}
      Let $\Gamma$ and $\rC$ be contours in $\Omega_\LL$ and $\Omega_\RR$, respectively, such that $\Gamma$ encloses all the points $\alpha_i$ and $\rC$ encloses all the points $\beta_i$.  We have the following Fredholm determinant formula for $\ELPP_{\boldsymbol{\lambda}}(m,n)$
    \begin{equation}
        \prob\left(\ELPP_{\boldsymbol{\lambda}}(m,n)\le L\right) = \det(\rI - K),
    \end{equation}
    where $K$ is an integral kernel acting on $L^2(\Gamma,\rd u/2\pi\ri)$ given by 
    \begin{equation}
    \label{eq:kernel_path-to-point}
        K(u_1,u_2) = \oint_\rC  \frac{\rd v}{2\pi\ri}\prod_{\ell=1}^n \frac{u_2-\beta_\ell}{v-\beta_\ell} \cdot \prod_{k=\lbd}^m \frac{v-\alpha_k}{u_2-\alpha_k}\cdot \frac{e^{L(u_2-v)}}{u_2-v}\cdot \ch(v,u_1),
    \end{equation}
    $\lbd$ is any integer parameter, and $\ch$ is the inhomogeneous characteristic function of $\boldsymbol{\lambda}$ defined in Theorem \ref{thm:hitting} (with the same parameter $\lbd$).
\end{thm}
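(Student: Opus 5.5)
We plan to integrate the density from Theorem \ref{thm:LPP_path-to-line} over the region $\{x\in W_n: x_n\le L\}$, rewrite the result as a determinant of the form on the left of \eqref{eq:findettoFred}, and then apply Theorem \ref{prop:main} in the simplified form \eqref{eq:simplified_kernel2}. Since $\ELPP_{\boldsymbol{\lambda}}(m,n)=x_n$ is the largest coordinate, we have $\prob(\ELPP_{\boldsymbol{\lambda}}(m,n)\le L)=\int_{0\le x_1\le\cdots\le x_n\le L}\det[M_{i,j}]\,\rd x$. The plan is to integrate $x_1,x_2,\ldots,x_n$ in turn, using the standard Schütz/Warren-type row operations. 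Row $i$ of $M$ depends only on $x_i$ through $e^{x_i(z-\beta_i)}\prod_{k\le i}(z-\beta_k)$, and integrating in $x_i$ lowers this product by one factor, $\int e^{x(z-\beta_i)}\rd x=e^{x(z-\beta_i)}/(z-\beta_i)$. After each integration, the lower-limit boundary term from row $i$ coincides with row $i-1$ evaluated at $x_{i-1}=x_i$, so it is cancelled by row subtraction. The exceptional case is $i=1$, where the lower limit $0$ survives.

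Carrying this out, we expect $\det[\tilde M_{i,j}]$, where $\tilde M_{i,j}$ is a contour integral over $|z|=R$ of $e^{L(z-\beta_i)}\frac{\prod_{k=1}^{i-1}(z-\beta_k)}{\prod_{k=1}^j(z-\beta_k)}\prod_{k=\lambda_j+1}^m\frac{\beta_j-\alpha_k}{z-\alpha_k}$ (up to a term for $i=1$ from the boundary at $0$, which I would handle by showing it has a negligible or upper-triangular effect; I would check this first on $n=1$). Next, split the $|z|=R$ contour into $\rC$ around the $\beta$'s and $\Gamma$ around the $\alpha$'s. The $\rC$ part has the form $U(i,j)$ with $p_i(v)=e^{L(v-\beta_i)}$ and
$$f_j(v)=\prod_{k=\lambda_j+1}^m\frac{\beta_j-\alpha_k}{v-\alpha_k},$$
so that $p_i(\beta_i)=f_j(\beta_j)=1$ and $p_i\neq0$. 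The $\Gamma$ part is $\int_\Gamma q_i(u)g_j(u)\,\frac{\rd u}{2\pi\ri}$ with
$$q_i(u)=e^{L(u-\beta_i)}\frac{\prod_{k=1}^{i-1}(u-\beta_k)}{\prod_{k=\lbd}^m(u-\alpha_k)},\qquad g_j(u)=\frac{\prod_{k=\lbd}^{\lambda_j}(u-\alpha_k)}{\prod_{k=1}^j(u-\beta_k)}\prod_{k=\lambda_j+1}^m(\beta_j-\alpha_k).$$
These are obtained by inserting the auxiliary index $\lbd$ through the product convention \eqref{eq:product_identity}. Thus $g_j=c_j\ftn_j$ with $c_j=\prod_{k=\lambda_j+1}^m(\beta_j-\alpha_k)$.

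To find $H$, write $f_j(v)/\prod_{k\le j}(v-\beta_k)=c_j\ftn_j(v)/\prod_{k=\lbd}^m(v-\alpha_k)$. Then $H(v,u)=-\prod_{k=\lbd}^m(v-\alpha_k)\,\ch(v,u)$ satisfies \eqref{eq:fgHrel} by \eqref{eq:char_linear_equations} and Theorem \ref{thm:hitting}, and it is analytic in $v\in\Omega_\RR$. We have $P_i=e^{-L(\beta_i-\beta_{i-1})}$, which is constant for $i\ge2$, and $q_\ell(u)/p_\ell(v)=e^{L(u-v)}\prod_{k<\ell}(u-\beta_k)/\prod_{k=\lbd}^m(u-\alpha_k)$. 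This is exactly the setting of \eqref{eq:simplified_kernel2} with $Q(u,v)=e^{L(u-v)}/\prod_{k=\lbd}^m(u-\alpha_k)$, which is analytic in $v$. Substituting $H$ gives $\det(\rI+K')$ with $K'=-K$, where $K$ is the kernel \eqref{eq:kernel_path-to-point}.

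The main obstacle is the integration step: verifying that the iterated integration truly yields upper-limit $L$ in every row, controlling the $x_1=0$ boundary term, and checking the sign and the index shift $\prod_{k\le i}\to\prod_{k\le i-1}$. For the boundary term, the first thing I would try is to show that the $x_1=0$ contribution vanishes because its first-row integrand is analytic inside $|z|=R$ in the relevant combination. Failing that, I would absorb it by deforming contours using $m\ge\lambda_1+1$.
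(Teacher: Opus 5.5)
Your route is the same as the paper's. You integrate the density of Theorem~\ref{thm:LPP_path-to-line} over $\{0\le x_1\le\cdots\le x_n\le L\}$, split $|z|=R$ into $\rC\cup\Gamma$, and apply Theorem~\ref{prop:main} through \eqref{eq:simplified_kernel2}.

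Your allocation of factors differs slightly but is valid. The paper first moves the constants $\prod_{k=\lbd}^m(\beta_j-\alpha_k)$ from columns to rows. It then puts $\prod_{k=\lbd}^m\frac{\beta_i-\alpha_k}{v-\alpha_k}$ into $p_i$ and takes $H=-\ch$. You keep $p_i=e^{L(v-\beta_i)}$ and $f_j=\prod_{k=\lambda_j+1}^m\frac{\beta_j-\alpha_k}{v-\alpha_k}$, and push $\prod_{k=\lbd}^m(v-\alpha_k)$ into $H$. Your $H$ is analytic on $\Omega_\RR$ and satisfies \eqref{eq:fgHrel}. The $P_i$ are constant, and \eqref{eq:simplified_kernel2} with your $Q$ returns exactly $-K$. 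This route skips the row/column rescaling step.

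Two steps need correcting, however.

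\textbf{Order of integration.} The order must be $x_n,x_{n-1},\ldots,x_1$, with $x_i\in[x_{i-1},L]$ and $x_0=0$; it cannot be $x_1,\ldots,x_n$. The lower-limit term of row $i$ equals $e^{x_{i-1}(\beta_{i-1}-\beta_i)}$ times row $i-1$. It is not equal to row $i-1$, but the scalar is harmless since it depends on neither $z$ nor $j$. You can subtract it only while $x_{i-1}$ is still a free variable appearing in row $i-1$ alone. If you integrate $x_1$ first, over $[0,x_2]$, then row $1$ picks up $e^{x_2(z-\beta_1)}$. Rows $1$ and $2$ then both depend on $x_2$, and multilinearity no longer lets you integrate row by row.

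\textbf{The $x_0=0$ boundary term.} Your first idea, analyticity inside $|z|=R$, fails. The integrand $\frac{1}{\prod_{k=1}^j(z-\beta_k)}\prod_{k=\lambda_j+1}^m\frac{\beta_j-\alpha_k}{z-\alpha_k}$ has poles at $\beta_1,\ldots,\beta_j$ and $\alpha_{\lambda_j+1},\ldots,\alpha_m$, all enclosed. The term vanishes for a different reason: the integrand is $O(|z|^{-2})$ at infinity, because $j\ge 1$ and $m-\lambda_j\ge m-\lambda_1\ge 1$. So the contour can be pushed to infinity and the first-row boundary term is identically zero for every $j$. No negligibility or upper-triangularity argument is needed, and this is precisely where $m\ge\lambda_1+1$ is used.

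With these two fixes, the rest of your argument goes through as written.
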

\begin{rmk}
    When $\lambda_1=\cdots=\lambda_n$, by taking $\lbd=\lambda_1+1$ and using Remark \ref{rmk:step_IC}, we obtain 
    \begin{equation}
        K(u_1,u_2) = \oint_\rC  \frac{\rd v}{2\pi\ri}\prod_{\ell=1}^n \frac{u_2-\beta_\ell}{v-\beta_\ell} \cdot \prod_{k=\lambda_1+1}^m \frac{v-\alpha_k}{u_2-\alpha_k}\cdot \frac{e^{L(u_2-v)}}{(u_2-v)(v-u_1)}.
    \end{equation}
\end{rmk}
\begin{proof}[Proof of Theorem \ref{thm:LPP-path-to-point}]
    We take the integral of \eqref{eq:LPP_path-to-line} iteratively over $x_n,\ldots,x_1$ within the domain $\{0\le x_1\le \cdots\le x_n\le L\}$. Note that the bounds of $x_i$ are $x_{i-1}$ and $L$, for $i=n,\ldots,1$, with the convention $x_0=0$. The $i$-th row after the integration becomes
    \begin{equation*}
        \oint_{|z|=R} \frac{\rd z}{2\pi\ri }  \left(e^{L(z-\beta_i)} -e^{x_{i-1}(z-\beta_i)} \right)
    \frac{\prod_{k=1}^{i-1} (z-\beta_k)}{\prod_{k=1}^j (z-\beta_k)} \prod_{k=\lambda_j+1  }^m \frac{\beta_j-\alpha_k}{z-\alpha_k}.
    \end{equation*}
    The integral of the part including the $x_{i-1}$ term can be dropped without affecting the resulting determinant since it is the same as the $-e^{x_{i-1}(\beta_{i-1}-\beta_i)}M_{i-1,j}$ when $i\ge 2$, or vanishes after deforming the contour to infinity when $i=1$. As a result, we obtain 
    \begin{equation}
    \begin{split}        
        \prob\left(\ELPP_{\boldsymbol{\lambda}}(m,n)\le L\right) &= \det\left[ \oint_{|z|=R} \frac{\rd z}{2\pi\ri}  e^{L(z-\beta_i)}
    \frac{\prod_{k=1}^{i-1} (z-\beta_k)}{\prod_{k=1}^j (z-\beta_k)} \prod_{k=\lambda_j+1  }^m \frac{\beta_j-\alpha_k}{z-\alpha_k}\right]_{i,j=1}^n\\
    &= \det\left[ \oint_{|z|=R} \frac{\rd z}{2\pi\ri}  e^{L(z-\beta_i)}
    \frac{\prod_{k=1}^{i-1} (z-\beta_k)}{\prod_{k=1}^j (z-\beta_k)} \prod_{k=\lbd }^m \frac{\beta_i-\alpha_k}{z-\alpha_k}\cdot \prod_{k=\lbd}^{\lambda_j}\frac{z-\alpha_k}{\beta_j-\alpha_k}\right]_{i,j=1}^n,
        \end{split}
    \end{equation}
    where the second equality follows from the fact that $\prod_{j=1}^n \prod_{k=\lambda_j+1}^m(\beta_j-\alpha_k) = \prod_{i=1}^n\prod_{k=\lbd}^m(\beta_i-\alpha_k)\cdot \prod_{j=1}^n\prod_{k=\lbd}^{\lambda_j}(\beta_j-\alpha_k)^{-1}$.
    Now we change the $|z|=R$ contour to two disjoint contours $\Gamma\subset \Omega_\LL$ and $\rC\subset\Omega_\RR$ such that $\Gamma$ encloses all the points in $\{\alpha_k:\lambda_n+1\le k\le m\}$ and  $\rC$ encloses the points in $\{\beta_k:1\le k\le n\}$. We apply Theorem \ref{prop:main} by choosing
\begin{equation}
\begin{split}
    &p_i(v)=e^{L(v-\beta_i)}\cdot \prod_{k=\lbd}^m \frac{\beta_i-\alpha_k}{v-\alpha_k},\quad f_j(v)=\prod_{k=\lbd}^{\lambda_j}\frac{v-\alpha_k}{\beta_j-\alpha_k},\\
    &q_i(u)=e^{L(u-\beta_i)}\cdot \prod_{k=\lbd}^m \frac{\beta_i-\alpha_k}{u-\alpha_k}\cdot \prod_{k=1}^{i-1}(u-\beta_k),\quad g_j(u)=\prod_{k=\lbd}^{\lambda_j}\frac{u-\alpha_k}{\beta_j-\alpha_k}\cdot \frac{1}{\prod_{k=1}^j(u-\beta_k)}.
    \end{split}
\end{equation}
Note that for this case, the kernel $K$ can be simplified as discussed in Section \ref{sec:check}, more explicitly, $K$ is given by \eqref{eq:simplified_kernel2} with $Q(v,u)=\prod_{k=\lbd}^m\frac{v-\alpha_k}{u-\alpha_k}\cdot e^{L(u-v)}$. Note that the function $H(v,u)=-\ch(v,u)$ satisfies the definition of the inhomogeneous characteristic function as in Definition \ref{def:inhomogeneous_ch}, and the negative sign is moved outside the kernel $K$. This implies the formula \eqref{eq:kernel_path-to-point}.
\end{proof}

The structure of the paper is as follows. In Section \ref{sec:trace_class}, we verify that $K$ is a trace class operator; therefore, $\det(\rI+K)$ in Theorem \ref{prop:main} is well defined. Section \ref{sec:proof} is devoted to the proof of Theorem \ref{prop:main}. In Section \ref{sec:proof_ch}, we verify the well-definedness of the transition probability \eqref{eq:transition_prob} and the expression \eqref{eq:ch_hitting_exp}, then we prove Theorem \ref{thm:hitting}. Finally, Section \ref{sec:applications} presents two additional applications of Theorem \ref{prop:main}.

\section*{Acknowledgement}
The authors thank Jinho Baik, Percy Deift, Yuchen Liao, and Leo Petrov for the helpful discussions and suggestions. Z.L. was partly supported by  NSF grants DMS-2552334 and DMS-2246683. T.T. was partly supported by NSF grant DMS-2246683. Most of the work was completed while Z.L. was at the University of Kansas.

%%%%%%%%%%%%%%%%%%%%%%%%%%%%%%%%%%%%%%%%%%%%%%%%%%%%%%%%%%%%%%%%
\section{Verification of a trace class operator}
\label{sec:trace_class}

Note that $K$ is a finite rank operator. It suffices to show that the following function is square integrable
\begin{equation}
    L_\ell(u):= \oint_{\rC_1} \ddbar{v_1}{}\cdots \oint_{\rC_\ell}\ddbar{v_\ell}{} \frac{H(v_1,u)}{\left[\prod_{i=1}^{\ell-1}(v_i-v_{i+1})\right]\cdot \left[\prod_{i=1}^\ell (v_i-\beta_i)P_i(v_i)\right]},
\end{equation}
where the integration contours are the same as in Theorem \ref{prop:main}. We shrink the $v_1$ contour so that it lies inside all the $v_2,\ldots,v_\ell$ contours. This results in two terms: one term with the same expression but with a modified $v_1$ contour, and a second term consisting of $\ell-1$ integrals with the $v_1$ integral evaluated at the pole $v_1=v_2$. For the second term, we then shrink the $v_2$ contour and repeat this procedure. This yields
\begin{equation}
    L_\ell(u) = L_\ell^{(1)}(u) +L_\ell^{(2)}(u)+\cdots + L_\ell^{(\ell)}(u).
\end{equation}
The functions $L_\ell^{(i)}$ are given by
\begin{equation}
\begin{split}
    &L_\ell^{(i)}(u) \\
    &= \oint_{\rC_{\ell+1}} \ddbar{v_i}{} \prod_{j=i+1}^\ell \oint_{\rC_{j}} \ddbar{v_{j}}{}  \frac{H(v_i,u)}{\left[\prod_{j=i}^{\ell-1} (v_j-v_{j+1})\right]\cdot \left[\prod_{j=i+1}^\ell (v_j-\beta_j)P_j(v_j)\right] \cdot \prod_{j=1}^i(v_i-\beta_j)p_i(v_i)},
\end{split}
\end{equation}
where $\rC_{\ell+1}$ is a simple closed contour containing all $\beta_k$ and is itself contained in $\rC_\ell$. We claim that each $L_\ell^{(i)}$ is square integrable. In fact, when $i=\ell$, we have 
\begin{equation}
\label{eq:L_ell^ell}
    L_\ell^{(\ell)}(u) =\oint_{\rC} \frac{H(v,u)}{\prod_{j=1}^\ell(v-\beta_j) p_\ell(v)}\ddbar{v}{} =\sum_{k=1}^\ell c_{\ell,k} \oint_{\rC} \frac{f_k(v)}{\prod_{j=1}^k(v-\beta_j)} H(v,u) \ddbar{v}{} =\sum_{k=1}^\ell c_{\ell,k} g_k(u),
\end{equation}
where $c_{\ell,k}$ are the coefficients in the series expansion 
\begin{equation}
\label{eq:expansion_for_squareintegrability}
    \frac{1}{\prod_{j=1}^\ell(v-\beta_j) p_\ell(v)} =\sum_{k=1}^{\ell} c_{\ell,k} \frac{f_k(v)}{\prod_{j=1}^k(v-\beta_j)} + \text{a function of $v$ analytic in $\Omega$}.
\end{equation}
Note that such an expansion exists since $p_\ell$ and $f_k$ are analytic in $\Omega$, $p_\ell$ is nonzero in $\Omega$, and $f_k(\beta_k)=1$ for each $k$.

Recall that $g_j$ is square integrable on $\Gamma$. Thus \eqref{eq:L_ell^ell} implies that $L^{(\ell)}_\ell(u)$ is square integrable.

The case when $i<\ell$ can be proved similarly, since $\tilde p_i(v):=(v-v_{i+1})p_i(v)$ is a nonzero analytic function within the contour $\rC_{\ell+1}$.  By using an expansion similar to  \eqref{eq:expansion_for_squareintegrability} for $\tilde p_i(v)$, we know that 
\begin{equation}
\label{eq:expansion_for_squareintegrability2}
    \frac{1}{\prod_{j=1}^i(v-\beta_j) \tilde p_i(v)} =\sum_{k=1}^{i} \tilde c_{i,k} \frac{f_k(v)}{\prod_{j=1}^k(v-\beta_j)} + \text{a function of $v$ analytic within $\rC_{\ell+1}$},
\end{equation}
and 
\begin{equation}
    \oint_{\rC_{\ell+1}}\frac{\rd v_i}{2\pi\ri} \frac{H(v_i,u)}{(v_i-v_{i+1})\prod_{j=1}^i(v_i-\beta_j)p_i(v_i)}=\sum_{k=1}^i \tilde c_{i,k}g_k(u)
\end{equation}
for some coefficients $\tilde c_{i,k}= \tilde c_{i,k}(v_{i+1})$ depending on $v_{i+1}$. Note that these coefficients are determined by \eqref{eq:expansion_for_squareintegrability2} and they are uniformly bounded when $v_{i+1}\in\rC_{i}$. Hence the above function is square integrable, and its $L^2$ norm is bounded by a constant independent of $v_{i+1}$. This implies that $L_\ell^{(i)}(u)$ is square integrable. This completes the proof.

\section{Proof of Theorem \ref{prop:main}}
\label{sec:proof}

The starting point of the proof of Theorem \ref{prop:main} is the following Proposition \ref{lem:inverseofmatrix}, which provides a formula for the inverse of an upper-triangular matrix whose entries are defined via contour integrals.

\begin{prop} 
\label{lem:inverseofmatrix}
Let $n \ge 1$. Let $\Omega$ be a simply connected subset of the complex plane containing the points $\beta_1, \ldots, \beta_n$, and let $\rC$ be a simple closed contour in $\Omega$ that encloses the points $\beta_1, \ldots, \beta_n$. Let $p_i$ be analytic and nonzero in $\Omega$ for all $i$. Define an $n \times n$ matrix $U_0$ by
\begin{equation}
\label{eq:Sdef}
U_0(i,j) := \oint_{\rC} \ddbar{v}{} \, \frac{\prod_{\ell=1}^{i-1} (v-\beta_\ell)}{\prod_{\ell=1}^{j} (v-\beta_\ell)} p_i(v), \quad 1 \le i,j \le n.
\end{equation}
Then we have
\begin{equation}
    U_0^{-1} = W,
\end{equation}
where $W$ is an $n \times n$ matrix whose entries are given by
\begin{equation}
\label{eq:Wdef}
W(i,j) =   \oint_{\rC_1} \ddbar{v_{1}}{} \cdots  \oint_{\rC_j} \ddbar{v_{j}}{} \frac{\prod_{\ell=1}^{i-1} (v_1-\beta_\ell)}{ \left[\prod_{\ell=1}^{j-1}(v_{\ell} - v_{\ell+1})\right] \cdot \left[\prod_{\ell=1}^j (v_\ell-\beta_\ell) P_\ell(v_\ell)\right]}, \quad 1 \le i,j \le n.
\end{equation}
Here $P_1(v):=p_1(v)$, and $P_i(v):=p_i(v)/p_{i-1}(v)$ for $2\le i\le n$, and $\rC_1,\ldots,\rC_n$ are nested simple closed contours in $\Omega$, ordered from outermost to innermost. These notations are the same as in Theorem \ref{prop:main}.
\end{prop}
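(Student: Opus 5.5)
Both $U_0$ and $W$ are upper triangular, so I will verify $WU_0=\rI$ directly; for square matrices this gives $U_0^{-1}=W$. For triangularity: $U_0(i,j)=0$ when $i>j$, because the integrand $\prod_{\ell=j+1}^{i-1}(v-\beta_\ell)\,p_i(v)$ is analytic inside $\rC$. For $W(i,j)$ with $i>j$, I integrate $v_1$ first, since it is the outermost contour. The integrand in $v_1$ is $\prod_{\ell=2}^{i-1}(v_1-\beta_\ell)/[(v_1-v_2)P_1(v_1)]$ when $j\ge 2$. Its only pole inside $\rC_1$ is $v_1=v_2$, which produces $\prod_{\ell=2}^{i-1}(v_2-\beta_\ell)/p_1(v_2)$. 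Combining with $P_2(v_2)$ gives $p_2(v_2)$ in the denominator, and the structure repeats one level down. By induction on $j$, $W(i,j)=\oint \prod_{\ell=j+1}^{i-1}(v_j-\beta_\ell)/p_j(v_j)\,\rd v_j/(2\pi\ri)=0$ for $i>j$. The same computation gives the diagonal entries: $W(j,j)=1/p_j(\beta_j)$ and $U_0(i,i)=p_i(\beta_i)$.

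For the full product, the natural approach is to collapse $W$'s multiple integral into a single one. More generally, for any $i$ I compute the $v_1,\dots,v_{j-1}$ integrals consecutively. At stage $k$ the $v_k$ integrand is $F(v_k)/[(v_k-v_{k+1})(v_k-\beta_k)P_k(v_k)]$, with $F$ analytic. Taking the residues at $v_k=v_{k+1}$ and at $v_k=\beta_k$ gives a sum of terms. A cleaner route is to verify the column identity $\sum_k U_0(i,k)W(k,j)=\delta_{ij}$ instead.

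Fix $i\le j$ and write $U_0(i,k)=\oint_{\rC_0} \prod_{\ell=1}^{i-1}(z-\beta_\ell)\,p_i(z)/\prod_{\ell=1}^{k}(z-\beta_\ell)\,\rd z/(2\pi\ri)$, with $z$ on a contour $\rC_0$ outside $\rC_1$. The product then contains $\sum_{k=1}^n \prod_{\ell=1}^{k-1}(v_1-\beta_\ell)/\prod_{\ell=1}^{k}(z-\beta_\ell)$. By Lemma \ref{lm:telescope} (first identity) this equals
\begin{equation*}
\frac{1}{v_1-z}\left[\prod_{\ell=1}^{n}\frac{v_1-\beta_\ell}{z-\beta_\ell}-1\right].
\end{equation*}
The term $-1/(v_1-z)$ is analytic in $z$ inside $\rC_0$ apart from the poles at $\beta_\ell$, $\ell\le i-1$, which cancel against the numerator. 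So its $z$-integral contributes only through the residue at $z=v_1$. The factor $\prod_\ell (v_1-\beta_\ell)$ in the other term kills the $v_1$ poles at the $\beta$'s. After the $z$ integral, the remaining $v_1$ integrand has the pole $v_1=v_2$ only, except for a residual piece coming from $z$.

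I expect the main obstacle to be organizing this residue bookkeeping so that exactly $\delta_{ij}$ survives. The identity to reach is: after the $z$ integration and the $v_1$ integration, the expression equals the $(i,j)$ entry computed with $n$ replaced by one level fewer, with $p_i$ replaced by $p_i/p_1$ in effect. This sets up an induction on $j$ (peel off $v_1$ and relabel $\beta_{\ell+1}\to\beta_\ell$, $P_{\ell+1}\to P_\ell$).

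If this becomes messy, my fallback is to prove $U_0W=\rI$ row by row using recurrences. The identity $\prod_{\ell=1}^{i}(v-\beta_\ell)=(v-\beta_i)\prod_{\ell=1}^{i-1}(v-\beta_\ell)$ relates rows $i$ and $i+1$ of $W$. Likewise $W(\cdot,j+1)$ relates to $W(\cdot,j)$ through the innermost integral $\oint_{\rC_{j+1}}\rd v_{j+1}/[(v_j-v_{j+1})(v_{j+1}-\beta_{j+1})P_{j+1}(v_{j+1})]$. Together with the triangularity and the diagonal values already established, these recurrences determine the inverse uniquely.
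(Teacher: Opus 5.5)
\textbf{There is a genuine gap: you never show that $(U_0W)(i,j)=0$ for $i<j$.}

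Your preliminary facts are correct. Both matrices are upper triangular, $U_0(i,i)=p_i(\beta_i)$ and $W(j,j)=1/p_j(\beta_j)$, so only the entries $(U_0W)(i,j)$ with $i<j$ remain. Putting the $U_0$ variable $z$ on a contour $\rC_0$ outside $\rC_1$ and telescoping with Lemma \ref{lm:telescope} is a sound start. (You announce $WU_0=\rI$ but then compute $U_0W$; either suffices.)

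The problem is that you stop at ``the main obstacle is organizing this residue bookkeeping'' and never evaluate the two telescoped terms. The induction you propose (peel off $v_1$, relabel) is neither formulated precisely nor checked. The recurrence fallback is only asserted. Two of your intermediate remarks also point the wrong way:
\begin{itemize}
\item The $-1$ term, $\prod_{\ell=1}^{i-1}(z-\beta_\ell)p_i(z)/(z-v_1)$, has no poles at the $\beta_\ell$ at all.
\item Integrating $z$ first in the product term produces residues at $z=\beta_\ell$, $\ell\ge i$ (your ``residual piece''). That is exactly what makes the bookkeeping messy.
\end{itemize}

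\textbf{How to close it.} The missing step is short; it comes from choosing the order of integration separately for each term.

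In the product term, integrate the $v$'s first. Since $z$ lies outside $\rC_1$ and $\prod_{\ell=1}^n(v_1-\beta_\ell)$ cancels $v_1-\beta_1$, the only pole of the $v_1$-integrand inside $\rC_1$ is $v_1=v_2$. Repeating this, you arrive at $\prod_{\ell=1}^n(v_j-\beta_\ell)/[(v_j-z)\prod_{\ell=1}^j(v_j-\beta_\ell)p_j(v_j)]$. This is analytic inside $\rC_j$ because $j\le n$, so the product term is $0$.

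In the $-1$ term, integrate $z$ first. The only pole is at $z=v_1$, which gives
\[
\oint_{\rC_1}\ddbar{v_1}{}\cdots\oint_{\rC_j}\ddbar{v_j}{}\frac{\prod_{\ell=1}^{i-1}(v_1-\beta_\ell)\,p_i(v_1)}{\left[\prod_{\ell=1}^{j-1}(v_\ell-v_{\ell+1})\right]\left[\prod_{\ell=1}^{j}(v_\ell-\beta_\ell)P_\ell(v_\ell)\right]}.
\]
Your own triangularity collapse of $v_1,\dots,v_{i-1}$ then reduces the $v_i$-integrand to $1/[(v_i-\beta_i)(v_i-v_{i+1})]$. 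Its integral over $\rC_i$, which encloses both poles, vanishes.

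\textbf{Comparison with the paper.} The displayed expression is exactly the one the paper reaches, by a different arrangement. The paper puts the $U_0$ variable inside and first moves $W$'s contours outward with Lemma \ref{lem:change_order_contour}, then moves them back afterwards. There the roles are swapped: the ``$1$'' term vanishes by analyticity and the product term yields the residue. Your outside placement, once completed as above, avoids that lemma entirely and gives a shorter proof.
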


The proof will use the following simple lemma of changing the order of contours. Although not explicitly stated, this lemma also appeared in \cite{Liu-Ortiz25}, and will be applied frequently in this paper.
\begin{lm}
    \label{lem:change_order_contour}
    Assume $\annu$ is an annulus. Suppose that $\Sigma^-_1,\ldots,\Sigma^-_n$ and  $\Sigma^+_1,\ldots,\Sigma^+_n$ are positively oriented circles in $\annu$ that are concentric with the annulus. Assume the circles $\Sigma_i^-$ are ordered from outermost to innermost, while the circles $\Sigma_i^+$ are ordered from innermost  to outermost. The superscripts indicate whether the sequence of radii is decreasing ($-$) or increasing ($+$). Let $h_1,\ldots,h_n$ be functions analytic in $\annu$. Then the following identity holds:
    \begin{equation}
        \label{eq:change_order_contour}
        \oint_{\Sigma^-_1}\frac{\rd w_1}{2\pi\ri} \cdots \oint_{\Sigma^-_n} \frac{\rd w_n}{2\pi\ri} \frac{\prod_{\ell=1}^n h_\ell(w_\ell)}{ \prod_{\ell=1}^{n-1} (w_\ell- w_{\ell+1})}
        =\sum_{k=1}^n \sum_{1\le i_1<\cdots<i_k = n} \oint_{\Sigma^+_{i_1}}\frac{\rd w_{i_1}}{2\pi\ri} \cdots \oint_{\Sigma^+_{i_k}}\frac{\rd w_{i_k}}{2\pi\ri}
        \frac{\prod_{\ell=1}^k h_{i_{\ell-1}\to i_\ell}(w_{i_\ell})}{\prod_{\ell=1}^{k-1}(w_{i_\ell}-w_{i_{\ell+1}})},
    \end{equation}
    where $h_{a\to b}(w):= \prod_{i=a+1}^b h_i(w)$ for any $a,b\in\intZ$ satisfying $a\le b$, and $i_0$ is set to be $0$ in the factor $h_{i_0\to i_1}$ on the right-hand side of the equation above.
\end{lm}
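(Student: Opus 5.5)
We prove the identity by induction on $n$. For $n=1$ the two sides coincide: the only term on the right is $k=1$, $i_1=1$, with $h_{0\to 1}=h_1$. Since every $h_\ell$ is analytic in $\annu$, deforming any circle within $\annu$ leaves an integral unchanged unless the circle crosses another circle that carries a pole. So only the relative order of the radii matters, and at each stage we may place the circles wherever is convenient.

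\textbf{Moving $w_1$ inward.} For $n\ge 2$, the integrand depends on $w_1$ only through $h_1(w_1)/(w_1-w_2)$, so its only pole in $w_1$ inside $\annu$ is at $w_1=w_2$. Shrink the $w_1$ contour from the outermost circle $\Sigma^-_1$ to a circle lying inside all the others, which we take to be $\Sigma^+_1$. It crosses the $w_2$ circle exactly once, and once it is inside the $w_2$ circle it meets no further singularities. The residue at $w_1=w_2$ is $h_1(w_2)$, which merges with $h_2(w_2)$. This gives
\begin{equation*}
I_n(h_1,\ldots,h_n)=\oint_{\Sigma^+_1}\frac{\rd w_1}{2\pi\ri}\, h_1(w_1)\, J(w_1)+I_{n-1}(h_1h_2,h_3,\ldots,h_n),
\end{equation*}
where $I_n$ denotes the left-hand side of \eqref{eq:change_order_contour}. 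Here $J(w_1)$ is the $(n-1)$-fold integral over $w_2,\ldots,w_n$, still in decreasing order, of $\frac{h_2(w_2)}{w_1-w_2}\prod_{\ell=3}^n h_\ell(w_\ell)\big/\prod_{\ell=2}^{n-1}(w_\ell-w_{\ell+1})$.

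\textbf{Applying the induction hypothesis twice.} In the first term, fix $w_1\in\Sigma^+_1$. Then $\tilde h_2(w):=h_2(w)/(w_1-w)$ is analytic on the sub-annulus of $\annu$ lying outside $\Sigma^+_1$, and $h_3,\ldots,h_n$ are analytic there too. The induction hypothesis for $n-1$ variables on this sub-annulus rewrites $J(w_1)$ as a sum over chains $2\le j_1<\cdots<j_m=n$ with contours $\Sigma^+_{j_1},\ldots,\Sigma^+_{j_m}$, all outside $\Sigma^+_1$. The first factor is $\tilde h_{1\to j_1}(w_{j_1})=h_{1\to j_1}(w_{j_1})/(w_1-w_{j_1})$. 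Multiplying by $h_1(w_1)=h_{0\to1}(w_1)$ turns each such term into exactly the right-hand-side term for the chain $(1,j_1,\ldots,j_m)$, including the Cauchy factor $1/(w_1-w_{j_1})$. So the first term produces precisely the chains with $i_1=1$.

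For the second term, $I_{n-1}(h_1h_2,h_3,\ldots,h_n)$, label the merged variable by the index $2$. The induction hypothesis gives chains $2\le i_1<\cdots<i_k=n$ with first factor $(h_1h_2)h_{2\to i_1}=h_{0\to i_1}$ and the other factors unchanged. These are exactly the chains with $i_1\ge2$. The two terms together give the full sum on the right of \eqref{eq:change_order_contour}, which completes the induction.

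\textbf{Main obstacle.} The step needing the most care is the first term. One has to justify applying the induction hypothesis with the parameter-dependent function $\tilde h_2$ on the smaller annulus outside $\Sigma^+_1$, and check that the resulting circle ordering agrees with the prescribed circles $\Sigma^+_{i}$. Both points are handled by the freedom to deform circles without crossing poles. The rest is index bookkeeping for $h_{a\to b}$ and the convention $i_0=0$.
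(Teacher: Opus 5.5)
Your induction is correct, but it is organized differently from the paper's argument. The paper first shrinks all $\Sigma^-_i$ so that they lie inside all $\Sigma^+_i$. It then pushes $w_1,\dots,w_n$ outward one at a time, from $\Sigma^-_\ell$ to $\Sigma^+_\ell$. Moving $w_\ell$ either leaves it on $\Sigma^+_\ell$ or picks up the residue where it coincides with the surviving variable of the block containing index $\ell-1$. The resulting $2^{n-1}$ terms are then ``collected and organized'' into the chain sum.

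You instead peel off only the pair $(1,2)$. Moving $w_1$ inward produces exactly the dichotomy $i_1=1$ versus $i_1\ge 2$. The induction hypothesis then does all the bookkeeping: once on the smaller concentric annulus with the $w_1$-dependent function $h_2(w)/(w_1-w)$, and once on $\annu$ with $h_1h_2$.

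Your route verifies automatically the combinatorics the paper leaves implicit. In the paper's deformation, each merged block survives as its lowest-indexed variable. That variable must be relabeled by the largest index $i_\ell$ of its block and moved onto $\Sigma^+_{i_\ell}$, carrying the factor $h_{i_{\ell-1}\to i_\ell}$ with the convention $i_0=0$. The cost of your route is justifying the lemma on a sub-annulus with a parameter-dependent function. You handle this correctly, via analyticity of $h_2(w)/(w_1-w)$ outside $\Sigma^+_1$ and the observation that only the relative radial order within each integral matters.

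The paper's one-pass deformation is shorter and shows directly where every residue comes from, at the price of a less explicit final reorganization.
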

\begin{proof}[Proof of Lemma \ref{lem:change_order_contour}]
    Without loss of generality we assume that  the contours $\Sigma_i^{-}$ are all inside the contours $\Sigma_i^{+}$ (otherwise we could shrink the  $\Sigma_i^{-}$ contours sufficiently close to the inner circle of $\annu$ without encountering any poles). We deform the contour $\Sigma_\ell^-$ to $\Sigma_\ell^+$ for $\ell=1,\ldots,n$ successively. Note that the $w_\ell$-integral produces two contributions: one from the residue of the integrand at $w_\ell = w_{\ell-1}$, and the other from the integral along the deformed contour $\Sigma_\ell^+$. Collecting and organizing these terms, we obtain \eqref{eq:change_order_contour}.
\end{proof}

\begin{proof}[Proof of Proposition \ref{lem:inverseofmatrix}]

Let $\rC^\out_n, \ldots, \rC^\out_1, \rC^\inn_1, \ldots, \rC^\inn_n$ be $2n$ nested simple closed contours in $\Omega$, each enclosing the points $\beta_1, \ldots, \beta_n$, ordered from outermost to innermost. Here we also assume that $\rC_i^\inn=\rC_i$ for $1\le i\le n$.

Applying Lemma \ref{lem:change_order_contour}, we can rewrite the entry $W(i,j)$ as follows
\begin{equation}
\label{eq:W0res}
W(i, j) = \sum_{k=1}^j \sum_{1 \le i_1 < \cdots < i_k = j} \prod_{\ell=1}^k \int_{\rC^\out_{i_\ell}} \ddbar{v_{i_\ell}}{}  \frac{\prod_{\ell=1}^{i-1} (v_{i_1}-\beta_\ell)}{ \left[\prod_{\ell=1}^{k-1}(v_{i_\ell} - v_{i_{\ell+1}})\right] \cdot \left[\prod_{\ell=1}^k \qq_{i_{\ell-1}\to i_\ell}(v_{i_\ell})\right]}
\end{equation}
with $i_0=0$ and $\qq_{a}(v):= (v-\beta_a)P_a(v)$. Recall the notation $h_{a\to b}$ we used in Lemma \ref{lem:change_order_contour}, thus
\begin{equation}
    \qq_{i_{\ell-1}\to i_\ell}(v_{i_\ell})= \prod_{a=i_{\ell-1} + 1}^{i_\ell} (v_{i_\ell}-\beta_a)  P_a(v_{i_\ell}).
\end{equation}
We also choose the specific contour $\rC_1^\inn$ for $v$ in $U_0(i,j)$ and have
\begin{equation}
    U_0(i,j)=\oint_{\rC_1^\inn} \ddbar{v}{}\frac{\prod_{\ell=1}^{i-1} (v-\beta_\ell)}{\prod_{\ell=1}^{j} (v-\beta_\ell)}p_i(v).
\end{equation}

We use the above expressions to evaluate
\begin{equation*}
(U_0 W)(i,j) = \sum_{m=1}^n U_0(i,m)\, W(m,j).
\end{equation*}
Note that the index $m$ only appears in $\frac{1}{\prod_{\ell=1}^{m} (v-\beta_\ell)}$ and $\prod_{\ell=1}^{m-1} (v_{i_1}-\beta_\ell)$. Applying Lemma \ref{lm:telescope}, we obtain 
\begin{equation}
\begin{split}
\sum_{m=1}^n \frac{\prod_{\ell=1}^{m-1} (v_{i_1}-\beta_\ell)}{\prod_{\ell=1}^{m} (v-\beta_\ell)}
&= \frac{1}{v-v_{i_1}}  \left[1-\frac{\prod_{\ell=1}^{n} (v_{i_1}-\beta_\ell)}{\prod_{\ell=1}^{n} (v-\beta_\ell)}\right].
\end{split}
\end{equation}
Inserting this summation, we obtain
\begin{equation*}
\begin{split}
&(U_0W)(i,j)\\
&= \sum_{k=1}^j \sum_{1  \le i_1 < \cdots < i_k = j} \oint_{\rC^\inn_{1}} \ddbar{v}{}    \prod_{\ell=1}^k \int_{\rC^\out_{i_\ell}} \ddbar{v_{i_\ell}}{}  \frac{\prod_{\ell=1}^{i-1} (v-\beta_\ell) \cdot p_i(v)}{ \left[\prod_{\ell=1}^{k-1}(v_{i_\ell} - v_{i_{\ell+1}}) \right] \cdot \left[\prod_{\ell=1}^k \qq_{i_{\ell-1}\to i_\ell}(v_{i_\ell}) \right]}   \frac{1}{v-v_{i_1}} \\
&-\sum_{k=1}^j \sum_{1 \le i_1 < \cdots < i_k = j} \oint_{\rC^\inn_{1}} \ddbar{v}{}    \prod_{\ell=1}^k \int_{\rC^\out_{i_\ell}} \ddbar{v_{i_\ell}}{}  \frac{\prod_{\ell=1}^{i-1} (v-\beta_\ell) \cdot p_i(v)}{ \left[\prod_{\ell=1}^{k-1}(v_{i_\ell} - v_{i_{\ell+1}}) \right] \cdot \left[\prod_{\ell=1}^k \qq_{i_{\ell-1}\to i_\ell}(v_{i_\ell})\right]}   \frac{1}{v-v_{i_1}}\frac{\prod_{\ell=1}^{n} (v_{i_1}-\beta_\ell)}{\prod_{\ell=1}^{n} (v-\beta_\ell)}. 
\end{split}
\end{equation*}
The first term vanishes since the integrand is analytic in $v$. Thus, $(U_0 W)(i,j)$ is equal to
\begin{equation}
\label{eq:analyticv}
\sum_{k=1}^j \sum_{1  \le i_1 < \cdots < i_k = j} \oint_{\rC^\inn_{1}} \ddbar{v}{}  \prod_{\ell=1}^k \int_{\rC^\out_{i_\ell}} \ddbar{v_{i_\ell}}{}  \frac{\prod_{\ell=1}^{i-1} (v-\beta_\ell) \cdot p_i(v)}{ \left[\prod_{\ell=1}^{k-1}(v_{i_\ell} - v_{i_{\ell+1}}) \right] \cdot \left[\prod_{\ell=1}^k \qq_{i_{\ell-1}\to i_\ell}(v_{i_\ell}) \right]}   \frac{1}{v_{i_1}-v} \frac{\prod_{\ell=1}^{n} (v_{i_1}-\beta_\ell)}{\prod_{\ell=1}^{n} (v-\beta_\ell)}.
\end{equation}
Now we evaluate the $v_{i_1}$-integral. Due to the $\prod_{\ell=1}^{n} (v_{i_1}-\beta_\ell)$ factor, it is analytic at $v_{i_1}=\beta_\ell$ and hence only the residue at $v_{i_1}=v$ survives (note that $v_{i_1}-v_{i_2}$ in the denominator does not contribute to any pole in this evaluation since $v_{i_2}$ lies outside $\rC_{i_1}^\out$). We rewrite $v$ as $v_{i_1}$ and deform the contour back to $\rC_{i_1}^\out$. This gives
\begin{equation}
\label{eq:SimpleS0W0}
\begin{split}
(U_0 W)(i,j)&= \sum_{k=1}^j \sum_{1  \le i_1 < \cdots < i_k = j} \prod_{\ell=1}^k \int_{\rC^\out_{i_\ell}} \ddbar{v_{i_\ell}}{} \frac{\prod_{\ell=1}^{i-1} (v_{i_1}-\beta_\ell) \cdot p_{i}(v_{i_1})}{\left[\prod_{\ell=1}^{k-1}(v_{i_\ell} - v_{i_{\ell+1}})\right]\cdot \left[\prod_{\ell=1}^k \qq_{i_{\ell-1}\to i_\ell}(v_{i_\ell})\right]} \\
&= \prod_{\ell=1}^j \int_{\rC^\inn_{\ell}} \ddbar{v_{\ell}}{} \frac{\prod_{\ell=1}^{i-1} (v_{1}-\beta_\ell) \cdot p_i(v_1)}{ \left[\prod_{\ell=1}^{j-1}(v_{\ell} - v_{\ell+1})\right] \cdot \left[\prod_{\ell=1}^j (v_\ell-\beta_\ell) P_\ell(v_\ell)\right]},
\end{split}
\end{equation}
where, in the second equality, we applied Lemma \ref{lem:change_order_contour}.

It remains to show the right hand side of \eqref{eq:SimpleS0W0} is equal to $\delta_i(j)$. We take residues successively at $v_1 = v_2$, $v_2 = v_3$, and so on. If $i \ge j$, we take all residues starting from $v_1 = v_2$ and ending with $v_{j-1} = v_j$. Using $\prod_{\ell=1}^j P_\ell(v) = p_j(v)$, we find that
\begin{equation*}
   (U_0 W)(i,j) = \int_{\rC^\inn_{j}} \ddbar{v_j}{} \frac{\prod_{\ell=1}^{i-1} (v_{j}-\beta_\ell) \cdot p_i(v_j)}{\prod_{\ell=1}^{j} (v_{j}-\beta_\ell) \cdot p_j(v_j)} = \delta_i(j), \quad \text{for }i \ge j.
\end{equation*}
If $i < j$, we take residues starting from $v_1 = v_2$ and ending with $v_{i-1} = v_i$. Noting that $\prod_{\ell=1}^{i} P_\ell(v) = p_i(v)$, we obtain
\begin{equation*}
\begin{split}
(U_0 W)(i,j) &= \left(\prod_{\ell=i}^j \int_{\rC^\inn_{\ell}} \ddbar{v_{\ell}}{} \right)
\frac{\prod_{\ell=1}^{i-1} (v_{i}-\beta_\ell) \cdot p_i(v_i)}{\left[\prod_{\ell=i}^{j-1}(v_{\ell} - v_{\ell+1})\right] \cdot \left[\prod_{\ell=1}^{i} (v_{i}-\beta_\ell) \prod_{\ell=1}^{i} P_\ell(v_i)\prod_{\ell=i+1}^j (v_\ell-\beta_\ell) P_\ell(v_\ell)\right]} \\
&= \left(\prod_{\ell=i}^j \int_{\rC^\inn_{\ell}} \ddbar{v_{\ell}}{} \right)
\frac{1}{\left[\prod_{\ell=i}^{j-1}(v_{\ell} - v_{\ell+1})\right] \cdot \left[(v_i-\beta_i)\prod_{\ell=i+1}^j (v_\ell-\beta_\ell) P_\ell(v_\ell)\right]}.
\end{split}
\end{equation*}
The resulting integrand is of order $O(v_i^{-2})$ as a function of $v_i$. Therefore, we can deform the outermost contour $\rC^\inn_{i}$ to infinity, which implies that $(U_0 W)(i,j) = 0$ for all $i < j$. Hence, the proposition follows.
\end{proof}

\medskip

The remaining part of this section is devoted to proving Theorem \ref{prop:main}.

\medskip 

\textit{Proof of Theorem \ref{prop:main}.} Write the matrix inside the determinant on the left-hand side of \eqref{eq:findettoFred} as $U(i,j) + M(i,j)$, where
\begin{equation}
U(i,j) := \oint_{\rC} \frac{\prod_{\ell=1}^{i-1} (v-\beta_\ell)}{\prod_{\ell=1}^{j} (v-\beta_\ell)} p_i(v) f_j(v)\,\ddbar{v}{}, 
\quad 
M(i,j) := \int_{\Gamma} q_i(u) g_j(u)\,\ddbar{u}{}.
\end{equation}
Note that $U$ is upper triangular with $1$'s on the diagonal since $p_i$ and $f_j$ are analytic within $\rC$ and  $p_i(\beta_i)=f_i(\beta_i)=1$. Now we write
\begin{equation}
\label{eq:AinCB^-2}
\det(U+M) = \det(\mathrm{I} + M U^{-1}).
\end{equation}
We view $M$ as the product of two operators $K_1$ and $K_2$:
\begin{equation}
\label{eq:def_K1K2}
    M(i,j) = \int_{\Gamma} K_1(i,u)K_2(u,j)\,\ddbar{u}{},\quad \text{with}\quad  K_1(i,u):= q_i(u), \quad K_2(u,j):= g_j(u).
\end{equation}
Therefore we further write \eqref{eq:AinCB^-2} as
\begin{equation}
    \label{eq:AinCB^-1}
    \det(U+M) = \det(\mathrm{I} + K_2 U^{-1} K_1),
\end{equation}
where $K_2U^{-1}K_1$ is an integral operator on $L^2(\Gamma,|\rd u|)$ which is a trace class operator since both $q_i$ and $g_i$ are square integrable functions.

Define an $n \times n$ matrix $U_1$ by
\begin{equation}
\label{eq:Rdef}
U_1(i,j) := \oint_{\rC} \ddbar{v}{} \, \frac{\prod_{\ell=1}^{i-1} (v-\beta_\ell)}{\prod_{\ell=1}^{j} (v-\beta_\ell)} f_j(v), 
\quad 1 \le i,j \le n.
\end{equation}
Note that $U_1$ is also upper triangular with $1$'s on the diagonal.  Now we define a family of polynomials $\{\bbeta_i(v)\}_{i=1}^n$ with degree less than $n$ as follows:
\begin{equation}
\label{eq:alphadef}
\bbeta_i(v) = \sum_{k=1}^n U_1^{-1}(i,k) \prod_{\ell=1}^{k-1} (v-\beta_\ell), \quad 1 \le i \le n,
\end{equation}
where $U_1^{-1} = (U_1^{-1}(i,j))_{i,j=1}^n$. By definition, it follows that 
\begin{equation}
\label{eq:orthogonal}
\oint_{\rC} \bbeta_i(v) \frac{f_j(v)}{\prod_{\ell=1}^{j} (v-\beta_\ell)} \ddbar{v}{} = \sum_{k=1}^m U_1^{-1}(i,k)U_1(k,j)= \delta_i(j), \quad \text{for } 1 \le i,j \le n.
\end{equation}

\medskip 

Recall the $n \times n$ matrix $U_0$ defined in \eqref{eq:Sdef}. For $1 \le i,j \le n$, using the summation formula in Lemma \ref{lm:telescope}, we have
\begin{equation*}
\begin{split}
(U_0U_1)(i,j) &= \sum_{m=1}^n \oint_{\rC_2} \ddbar{v_2}{}  \frac{\prod_{\ell=1}^{i-1} (v_2-\beta_\ell)}{\prod_{\ell=1}^{m} (v_2-\beta_\ell)} p_i(v_2) \oint_{\rC_1} \ddbar{v_1}{}  \frac{\prod_{\ell=1}^{m-1} (v_1-\beta_\ell)}{\prod_{\ell=1}^{j} (v_1-\beta_\ell)} f_j(v_1) \\
&= \oint_{\rC_2} \ddbar{v_2}{}  \oint_{\rC_1} \ddbar{v_1}{}  \frac{\prod_{\ell=1}^{i-1} (v_2-\beta_\ell) \cdot p_i(v_2)f_j(v_1)}  {\prod_{\ell=1}^{j} (v_1-\beta_\ell)}  \cdot \frac{1}{v_2 - v_1}\left(1 - \frac{\prod_{\ell=1}^{n} (v_1-\beta_\ell)}{\prod_{\ell=1}^{n} (v_2-\beta_\ell)}\right)\\
&=\oint_{\rC_2} \ddbar{v_2}{}  \oint_{\rC_1} \ddbar{v_1}{}  \frac{\prod_{\ell=1}^{i-1} (v_2-\beta_\ell) \cdot p_i(v_2)f_j(v_1)}  {\prod_{\ell=1}^{j} (v_1-\beta_\ell)}  \cdot \frac{1}{v_2 - v_1}\\
&\quad + \oint_{\rC_2} \ddbar{v_2}{}  \oint_{\rC_1} \ddbar{v_1}{}  \frac{\prod_{\ell=1}^{i-1} (v_2-\beta_\ell) \cdot p_i(v_2)f_j(v_1)}  {\prod_{\ell=1}^{j} (v_1-\beta_\ell)}  \cdot \frac{1}{v_1 - v_2}  \frac{\prod_{\ell=1}^{n} (v_1-\beta_\ell)}{\prod_{\ell=1}^{n} (v_2-\beta_\ell)},
\end{split}
\end{equation*}
where $\rC_1$ encloses $\rC_2$, and both contours enclose the points $\beta_1,\ldots,\beta_n$. 
The first term vanishes since the integrand is analytic as a function of $v_2$ within $\rC_2$. For the second term, the integrand as a function of $v_1$ only has one pole at $v_1=v_2$. By evaluating the pole, we obtain
\begin{equation*}
\begin{split}
(U_0U_1)(i,j) 
= \oint_{\rC_2} \ddbar{v_2}{}  \frac{\prod_{\ell=1}^{i-1} (v_2-\beta_\ell)}{\prod_{\ell=1}^{j} (v_2-\beta_\ell)} p_i(v_2) f_j(v_2) 
= U(i,j).
\end{split}
\end{equation*}
Now Proposition \ref{lem:inverseofmatrix} implies that $U^{-1} = U_1^{-1}W$, where $W$ is an $n \times n$ matrix defined in \eqref{eq:Wdef}. Hence, for $1 \le i,j \le n$, we have
\begin{equation}
\label{eq:B^-1def}
U^{-1}(i,j) = \oint_{\rC_1} \ddbar{v_{1}}{} \cdots  \oint_{\rC_j} \ddbar{v_{j}}{} \frac{\bbeta_i(v_1)}{ \left[\prod_{\ell=1}^{j-1}(v_{\ell} - v_{\ell+1})\right] \cdot \left[\prod_{\ell=1}^j (v_\ell-\beta_\ell) P_\ell(v_\ell)\right]},
\end{equation}
where we used the definition of $\bbeta_i$ in \eqref{eq:alphadef}. Now we combine it with the definition of the kernel $K_2$ in \eqref{eq:def_K1K2}, and obtain
\begin{equation}
\label{eq:CB^-1expr}
\begin{split}
(K_2U^{-1})(u,j) = \oint_{\rC_1} \ddbar{v_{1}}{} \cdots  \oint_{\rC_j} \ddbar{v_{j}}{} \frac{1}{ \left[\prod_{\ell=1}^{j-1}(v_{\ell} - v_{\ell+1})\right] \cdot \left[\prod_{\ell=1}^j (v_\ell-\beta_\ell) P_\ell(v_\ell)\right]} \sum_{m=1}^n g_m(u) \bbeta_m(v_1).
\end{split}
\end{equation}
Recall in the assumption of Theorem \ref{prop:main}, $H(v,u)$ is analytic in $v \in \Omega$. We also recall \eqref{eq:alphadef}. $\bbeta_i(v)=\prod_{\ell=1}^{i-1} (v-\beta_\ell)+\sum_{k=i+1}^n U_1^{-1}(i,k) \prod_{\ell=1}^{k-1} (v-\beta_\ell)$ since $U_1^{-1}$ is upper triangular with $1$'s on the diagonal. $H(v,u)$ has the following series expansion in $\Omega$
\begin{equation}
\label{eq:H_expansion}
    H(v,u) = \sum_{m=1}^n c_m(u)\bbeta_m(v) +J(v,u), 
\end{equation}
where $J(v,u)$ is a function divisible by $\prod_{\ell=1}^{n} (v-\beta_\ell)$, i.e., $J(v,u) \cdot \prod_{\ell=1}^{n} (v-\beta_\ell)^{-1}$ is analytic in $v \in \Omega$. Recall that $H(v,u)$ satisfies \eqref{eq:fgHrel}. Inserting \eqref{eq:H_expansion} in \eqref{eq:fgHrel} implies
\begin{equation}
\begin{split}
    g_i(u)&= \oint_{\rC} H(v,u)\frac{f_i(v)}{\prod_{\ell=1}^{i} (v-\beta_\ell)}\ddbar{v}{} \\
    &=\sum_{m=1}^n c_m(u)\oint_\rC \bbeta_m(v)\frac{f_i(v)}{\prod_{\ell=1}^{i} (v-\beta_\ell)}\ddbar{v}{} + \oint_\rC \frac{f_i(v)J(v,u)}{\prod_{\ell=1}^{i} (v-\beta_\ell)}\\
    &=c_i(u),
\end{split}
\end{equation}
where we applied the orthogonal relation \eqref{eq:orthogonal} and the assumption of $J$ in the last equation.  Now we have
\begin{equation}
\label{eq:H_expansion2}
    \sum_{m=1}^n g_m(u)\bbeta_m(v) = \sum_{m=1}^n c_m(u)\bbeta_m(v) = H(v,u) - J(v,u).
\end{equation}
Inserting it to \eqref{eq:CB^-1expr} gives
\begin{equation}
\label{eq:BA_inverse_aux}
\begin{split}
(K_2U^{-1})(u,j) &= \oint_{\rC_1} \ddbar{v_{1}}{} \cdots  \oint_{\rC_j} \ddbar{v_{j}}{} \frac{H(v_1,u)}{ \left[\prod_{\ell=1}^{j-1}(v_{\ell} - v_{\ell+1})\right] \cdot \left[\prod_{\ell=1}^j (v_\ell-\beta_\ell) P_\ell(v_\ell)\right]}\\
&\quad -  \oint_{\rC_1} \ddbar{v_{1}}{} \cdots  \oint_{\rC_j} \ddbar{v_{j}}{} \frac{J(v_1,u)}{ \left[\prod_{\ell=1}^{j-1}(v_{\ell} - v_{\ell+1})\right] \cdot \left[\prod_{\ell=1}^j (v_\ell-\beta_\ell) P_\ell(v_\ell)\right]} .  
\end{split}
\end{equation}

Now we claim that the second term on the right hand side of \eqref{eq:BA_inverse_aux} vanishes. In fact, this follows from the analyticity of $J(v,u)\prod_{\ell=1}^{n} (v-\beta_\ell)^{-1}$ in $v\in \Omega$. If we evaluate the $v_1$ integral, the only contribution comes from the residue at $v_1=v_2$. Then we evaluate the $v_2$ integral, due to the factor $J(v_2,u)$, only the residue at $v_2=v_3$ survives. We repeat this procedure and obtain
\begin{equation}
    \oint_{\rC_1} \ddbar{v_{1}}{} \cdots  \oint_{\rC_j} \ddbar{v_{j}}{} \frac{ J(v_1,u)}{ \left[\prod_{\ell=1}^{j-1}(v_{\ell} - v_{\ell+1})\right] \cdot \left[\prod_{\ell=1}^j (v_\ell-\beta_\ell) P_\ell(v_\ell)\right]} = \oint_{\rC_j} \ddbar{v_j}{} \frac{J(v_j,u)}{\prod_{\ell=1}^{j} (v_j-\beta_\ell)p_j(v_j)}=0,
\end{equation}
since the integrand is analytic in $\Omega$.

Thus \eqref{eq:BA_inverse_aux}  becomes
\begin{equation*}
\begin{split}
(K_2U^{-1})(u,j) &= \oint_{\rC_1} \ddbar{v_{1}}{} \cdots  \oint_{\rC_j} \ddbar{v_{j}}{} \frac{ H(v_1,u)}{ \left[\prod_{\ell=1}^{j-1}(v_{\ell} - v_{\ell+1})\right] \cdot \left[\prod_{\ell=1}^j (v_\ell-\beta_\ell) P_\ell(v_\ell)\right]}.
\end{split}
\end{equation*}
Inserting it into \eqref{eq:AinCB^-1} gives the desired result.

\section{Probabilistic representation of the inhomogeneous characteristic function}
\label{sec:proof_ch}
In this section, we first show the left-hand side of \eqref{eq:sum_normalized} is absolutely convergent and prove the identity \eqref{eq:sum_normalized}. Then we will verify that \eqref{eq:ch_hitting_exp} is well defined and absolutely convergent, and prove Theorem \ref{thm:hitting}.

\subsection{Well-definedness and verification of \eqref{eq:sum_normalized}}
\label{sec:verification_transition}

By applying Lemma \ref{lm:telescope}, we know that
\begin{equation}
\begin{split}
    \sum_{x\in\intZ} \left|\prob(G_{ k}=x\mid G_{k-1}=y)\right|
    &=\sum_{x\le y} \left| \prod_{i=x+1}^y \frac{\cc-\alpha_i}{\beta_k-\alpha_i} - \prod_{i=x}^y \frac{\cc-\alpha_i}{\beta_k-\alpha_i}\right|\\
    &\le 2\sum_{x\le y} \prod_{i=x+1}^y \left| \frac{\cc-\alpha_i}{\beta_k-\alpha_i}\right|\le 2\sum_{x\le y}(1-\epsilon)^{y-x}<\infty,
\end{split}
\end{equation}
where we used \eqref{eq:cc_assumption} in the second  inequality. Thus the left-hand side of \eqref{eq:sum_normalized} is absolutely convergent. Applying Lemma \ref{lm:telescope} and using \eqref{eq:cc_assumption}, we obtain \eqref{eq:sum_normalized} immediately.

\subsection{Well-definedness of the hitting expectation representation for $\ch$}
\label{sec:well-definedness-hitting}
Note that for any fixed $x$, the expectation is well-defined and finite due to the indicator function $\1_{\tau<n}$. 

When $x\le \lambda_n$, since $\lambda_1\ge \cdots\ge \lambda_n$ and $G_0\ge G_1\ge \cdots\ge G_{n-1}$, we have $G_k \le G_0=x\le \lambda_n\le \lambda_{k+1}$ for each $0\le k\le n-1$. This implies $\1_{\tau<n}=0$ and the expectation vanishes.

On the other hand, when $x> \lambda_1$, we have $\tau=0$ and $G_\tau=x$. Therefore
\begin{equation}
\begin{split}
    &\sum_{x> \lambda_1}\prod_{i=\lbd}^{x-1}(u-\alpha_i)\cdot \E_{G_0=x}\left[ \frac{\prod_{j=1}^\tau (v-\beta_j)}{\prod_{i=\lbd}^{G_\tau}(v-\alpha_i)}\frac{1}{\prod_{i=1}^\tau(\cc-\beta_i) \cdot \prod_{j=G_\tau+1}^x (\cc-\alpha_j)}\1_{\tau<n}\right]\\
    &=\sum_{x=\lambda_1+1}^\infty \prod_{i=\lbd}^{x-1}(u-\alpha_i)\cdot \frac{1}{\prod_{i=\lbd}^x(v-\alpha_i)}\\
    &=\frac{1}{v-u} \prod_{i=\lbd}^{\lambda_1}\frac{u-\alpha_i}{v-\alpha_i},
\end{split}
\end{equation}
where the last equation comes from Lemma \ref{lm:telescope} and the assumption \eqref{eq:assumption_beta}. Note that this summation is absolutely convergent due to \eqref{eq:assumption_beta}.

Combining the discussions above, we immediately know that the right-hand side of \eqref{eq:ch_hitting_exp} is well defined and the summation is absolutely convergent.

\subsection{Proof of Theorem \ref{thm:hitting}}
\label{sec:proof-hitting}

Recall that $\alpha_i\in\Omega_\LL$ and $\beta_i\in\Omega_\RR$, and $\Omega_\LL\cap\Omega_\RR=\emptyset$. It is straightforward to see that $\ch(v,u)$ is analytic for $v\in\Omega_\RR$ since each summand is analytic and the summation is absolutely convergent. 

We also note that Theorem \ref{thm:hitting} is independent of the parameter $\lbd$ since $\lbd$ cancels out in the integral equations \eqref{eq:char_linear_equations} if we insert the formula \eqref{eq:ch_hitting_exp}. Without loss of generality, we assume $\lbd<\lambda_n$ for simplification.

It remains to verify that $\ch$ satisfies the integral equations \eqref{eq:char_linear_equations}. Since the summation in $\ch(v,u)$ is absolutely convergent, we can change the order of the summation and integral. This implies
\begin{equation}
    \begin{split}
        \oint_\rC \ftn_m(v) \ch(v,u)\frac{\rd v}{2\pi\ri}
       = \sum_{x\in \intZ} \prod_{i=\lbd}^{x-1}(u-\alpha_i) \cdot \E_{G_0=x} \left[ M_\tau \cdot \1_{\tau<n} \right],
    \end{split}
\end{equation}
where 
\begin{equation}
    M_k:= \oint_\rC \ftn_m(v) \cdot Q_k(v) \frac{\rd v}{2\pi\ri},
    \quad \text{with}\quad Q_k(v):=\frac{\prod_{j=1}^k (v-\beta_j)}{\prod_{i=\lbd}^{G_k}(v-\alpha_i)} \frac{1}{\prod_{i=1}^k(\cc-\beta_i)\cdot \prod_{j=G_k+1}^{G_0}(\cc-\alpha_j)}.
\end{equation}

Denote $\hat M_k= M_k\cdot \1_{G_k> \lambda_m}$. Below we show that $(\hat M_k)_{0\le k\le m-1}$ is a martingale with respect to the filtration $\mathcal{F}_k:=\sigma(G_0,\ldots,G_k)$. In fact, for each $1\le k\le m-1$, we have 
\begin{equation}
\begin{split}
    &\E \left[\hat M_k \mid \mathcal{F}_{k-1}\right]\\
    &=\1_{G_{k-1}> \lambda_m}\sum_{y=\lambda_m+1}^{G_{k-1}}\oint_{\rC}\ftn_m(v) \cdot Q_{k-1}(v) \cdot \frac{(v-\beta_k)\prod_{i=y+1}^{G_{k-1}}(v-\alpha_i)}{(\cc-\beta_k)\cdot \prod_{j=y+1}^{G_{k-1}}(\cc-\alpha_j)} \cdot  (\beta_k-\cc)\frac{\prod_{i=y+1}^{G_{k-1}}(\cc-\alpha_i)}{\prod_{i=y}^{G_{k-1}}(\beta_k-\alpha_i)} \frac{\rd v}{2\pi\ri}\\
    &=\1_{G_{k-1}> \lambda_m}\oint_{\rC}\ftn_m(v) \cdot Q_{k-1}(v) \cdot (-v+\beta_k)\cdot \sum_{y=\lambda_m+1}^{G_{k-1}} \frac{\prod_{i=y+1}^{G_{k-1}}(v-\alpha_i)}{\prod_{i=y}^{G_{k-1}}(\beta_k-\alpha_i)} \frac{\rd v}{2\pi\ri}\\
    &=\hat M_{k-1}  -\1_{G_{k-1}>\lambda_m}\cdot  \oint_\rC \ftn_m(v)\cdot Q_{k-1}(v) \cdot \prod_{i=\lambda_m+1}^{G_{k-1}} \frac{v-\alpha_i}{\beta_k-\alpha_i}\frac{\rd v}{2\pi\ri},
\end{split}
\end{equation}
where we used Lemma \ref{lm:telescope} in the last equation. Note that the last integral is equal to
\begin{equation}
    \text{constant}\cdot \oint_{\rC} \frac{1}{\prod_{j=k}^m (v-\beta_j)}\frac{\rd v}{2\pi\ri}=0.
\end{equation}
Therefore $\E \left[\hat M_k \mid \mathcal{F}_{k-1}\right] = \hat M_{k-1}$ and $(\hat M_k )_{0\le k\le m-1}$ is a martingale.

Now we claim that 
\begin{equation}
\label{eq:condition_stopping_time}
    \E_{G_0=x}\left[\hat M_{\tau\wedge (m-1)}\right] = \E_{G_0=x} \left[ M_\tau\cdot \1_{\tau<n}\right].
\end{equation}
In fact, when $\tau\ge m$, by the definition of $M_\tau$ we see that the integrand is analytic for $v$ in $\Omega_\RR$, hence $M_\tau=0$. On the other hand, by the definition of $\tau$, $\tau\ge m$ implies $G_{m-1}\le \lambda_m$. Moreover, $\tau\wedge (m-1)=m-1$, which implies $\hat M_{\tau\wedge (m-1)}=\hat M_{m-1}$, which is equal to zero due to the indicator function $\1_{G_{m-1}>\lambda_m}=0$. These discussions imply \eqref{eq:condition_stopping_time} holds when $\tau\ge m$.

The other case when $\tau\le m-1$ is trivial since 
\begin{equation}
    \hat M_{\tau\wedge (m-1)} =\hat M_{\tau} =M_\tau\cdot \1_{G_\tau>\lambda_m} =M_\tau =M_\tau\cdot \1_{\tau<n}, 
\end{equation}
where in the second-to-last equation we used the fact that $G_\tau>\lambda_{\tau+1}\ge \lambda_m$.

Finally, we apply the optional stopping theorem and obtain
\begin{equation}
    \E_{G_0=x}\left[\hat M_{\tau\wedge (m-1)}\right] = \E_{G_0=x}\left[\hat M_0\right]
\end{equation}
which is evaluated explicitly as follows
\begin{equation}
    \begin{split}
       \E_{G_0=x}\left[\hat M_{\tau\wedge (m-1)}\right]&= \E_{G_0=x}\left[\hat M_0\right]
        =\1_{x>\lambda_m} \oint_\rC \ftn_m(v)\cdot Q_0(v) \frac{\rd v}{2\pi\ri}\\
        &=\1_{x>\lambda_m}\oint_{\rC}\ftn_m(v)\frac{1}{\prod_{i=\lbd}^x(v-\alpha_i)}\frac{\rd v}{2\pi\ri}.
    \end{split}
\end{equation}

Combining with \eqref{eq:condition_stopping_time}, we immediately obtain
\begin{equation}
    \begin{split}
        \oint_\rC \ftn_m(v) \ch(v,u)\frac{\rd v}{2\pi\ri}
       &= \sum_{x\in \intZ} \prod_{i=\lbd}^{x-1}(u-\alpha_i) \cdot\1_{x>\lambda_m}\oint_{\rC}\ftn_m(v)\frac{1}{\prod_{i=\lbd}^x(v-\alpha_i)}\frac{\rd v}{2\pi\ri}\\
       &= \oint_\rC \ftn_m(v)\sum_{x=\lambda_m+1}^\infty \frac{\prod_{i=\lbd}^{x-1}(u-\alpha_i)}{\prod_{i=\lbd}^x(v-\alpha_i)}\frac{\rd v}{2\pi\ri}\\
       &=\oint_\rC \ftn_m(v)\cdot \frac{1}{v-u} \cdot \prod_{i=\lbd}^{\lambda_m} \frac{u-\alpha_i}{v-\alpha_i}\frac{\rd v}{2\pi\ri},
    \end{split}
\end{equation}
where we used Lemma \ref{lm:telescope} and the assumption \eqref{eq:assumption_beta} in the last equation. The right-hand side of the above integral can be evaluated explicitly using residue computations
\begin{equation}
    \begin{split}
        \oint_\rC \ftn_m(v)\cdot \frac{1}{v-u} \cdot \prod_{i=\lbd}^{\lambda_m} \frac{u-\alpha_i}{v-\alpha_i}\frac{\rd v}{2\pi\ri}
        &=\oint_\rC \frac{1}{v-u} \cdot \frac{\prod_{i=\lbd}^{\lambda_m} (u-\alpha_i)}{\prod_{i=1}^m(v-\beta_i)}\frac{\rd v}{2\pi\ri}\\
        &=-\frac{\prod_{i=\lbd}^{\lambda_m} (u-\alpha_i)}{\prod_{i=1}^m(u-\beta_i)}
        =-\ftn_m(u).
    \end{split}
\end{equation}
This completes the proof.

%%%%%%%%%%%%%%%%%%%%%%%%%%%%%%%%%%%%%%%%%%%%%%%%%%%%%%%%%%%%
\section{Applications}
\label{sec:applications}

In this section, we present two more applications of Theorem \ref{prop:main}.
The first application is about the homogeneous version of Theorem \ref{thm:LPP_path-to-line}. Setting $\alpha_i=-1$ and $\beta_i=0$ for all $i$ in Theorem \ref{thm:LPP_path-to-line} gives a Sch\"utz-type determinantal formula for the density function in homogeneous DLPP from an arbitrary initial profile to a column; see \eqref{eq:Llambda-schutz}. We show how to use a known Fredholm determinant formula obtained in \cite{Liu22a} to derive \eqref{eq:Llambda-schutz}, using Theorem \ref{prop:main}. Thus, this gives an indirect proof of the homogeneous version of Theorem \ref{thm:LPP_path-to-line}. See Section \ref{subsec:equivalence_formulas_column} for this application.

The second application concerns the TASEP path-integral formula obtained in \cite{Matetski-Quastel-Remenik21}. In Section \ref{subsec:equal-time_path_integral}, we provide a new proof of this formula using  Theorem \ref{prop:main}.  We are particularly grateful to Yuchen Liao for suggesting this application and sharing preliminary calculations with us about why Theorem \ref{prop:main} can be applied for the TASEP equal-time multipoint distribution (which correspond to the material in Section \ref{subsec:equal-time_path_integral} up to Lemma \ref{lm:Taround0}).

%%%%%%%%%%%%%%%%%%%%%%%%%%%%%%%%%%%%%%
\subsection{Indirect proof of the path-to-line joint distribution in homogeneous DLPP}
\label{subsec:equivalence_formulas_column}

By setting $\alpha_i=-1$ and $\beta_i=0$ for all $i$ and evaluating the integral with respect to $x$, the homogeneous version of Theorem \ref{thm:LPP_path-to-line} yields the following result. For notational convenience, we let $N$ denote the column dimension in this application. This choice aligns with the conventions in \cite{Liu22a}, upon which our analysis heavily relies.

\begin{cor}[\cite{Hong-Liu-Tripathi26}]
Assume  $0\le t_1\le \cdots\le t_N$. Use the same notation as in Theorem \ref{thm:LPP_path-to-line}. We have
\begin{equation} \label{eq:Llambda-schutz}
\prob \left( \bigcap_{i=1}^N \left\{\ELPP_{\boldsymbol{\lambda}}(m, i) \le t_i\right\}\right) = 
\det\left[\oint_{|z|>1} z^{i-j-1}(z+1)^{\lambda_j-m}\mathrm{e}^{t_iz} \frac{\rd z}{2\pi\ri}\right]_{i,j=1}^N.
\end{equation}
\end{cor}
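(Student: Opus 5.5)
The plan is to obtain \eqref{eq:Llambda-schutz} as the homogeneous specialization of Theorem \ref{thm:LPP_path-to-line}, integrating out the density over the region $\{x\in W_N: x_i\le t_i\}$. The section introduction, however, announces an indirect proof, so I will also describe that route. The indirect proof starts from the Fredholm determinant formula of \cite{Liu22a} for the TASEP/DLPP multipoint distribution with general initial condition $Y=(y_i=\lambda_i-i+1)$. It then uses Theorem \ref{prop:main} to convert the right-hand side of \eqref{eq:Llambda-schutz} into that same Fredholm determinant.

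\textbf{Direct integration.} I would integrate the density $\det[M_{i,j}]$ (with $\alpha=-1$, $\beta=0$) successively over $x_N,\ldots,x_1$, with $x_i$ ranging over $[x_{i-1},t_i]$ and $x_0=0$. This is exactly as in the proof of Theorem \ref{thm:LPP-path-to-point}, now with distinct upper limits $t_i$. Integrating $e^{x_iz}$ in $x_i$ produces $z^{-1}(e^{t_iz}-e^{x_{i-1}z})$, which lowers the power $z^{i}$ to $z^{i-1}$. The lower-limit term is row $i-1$ of the current matrix, multiplied by $e^{\cdot}$ factors that do not depend on $j$, so it drops from the determinant. For $i=1$ this term vanishes by deforming the contour to infinity, using $\lambda_j-m<0$. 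The only real check is the order of operations: the row-$(i-1)$ entry must still be in its unintegrated form when row $i$ is integrated, which is guaranteed by integrating $x_N$ first. The resulting entries are $\oint z^{i-j-1}(z+1)^{\lambda_j-m}e^{t_iz}\,\rd z/2\pi\ri$, noting that the factors $(\beta_j-\alpha_k)$ all equal $1$.

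\textbf{Indirect route.} Here I would split the contour $|z|>1$ into a small circle around $0$ and a contour $\Gamma$ around $-1$. Theorem \ref{prop:main} is applied with $\beta_i=0$, $p_i(v)=e^{t_iv}(v+1)^{-m}$, $f_j(v)=(v+1)^{\lambda_j}$, $q_i(u)=u^{i-1}e^{t_iu}(u+1)^{-m}$ and $g_j(u)=u^{-j}(u+1)^{\lambda_j}$. For $H$ I would take $-\ch$ from Theorem \ref{thm:hitting} in the homogeneous form \eqref{eq:homogeneous_characteristic_function}, which requires taking $\Gamma$ inside $|u+1|<1$. In this setting $P_i(v)=e^{(t_i-t_{i-1})v}$ is no longer constant, so the kernel keeps the nested $v_1,\ldots,v_\ell$ integrals of \eqref{eq:def_Kernel_K}. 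The next step is to match this kernel with Liu's kernel from \cite{Liu22a}. That kernel has the same multi-contour structure in the variables attached to the times $t_\ell$, and it involves the characteristic function of $Y$.

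\textbf{Expected obstacle.} The main difficulty in the indirect route is this kernel matching. It requires reconciling the conventions of \cite{Liu22a}, namely its parametrization and signs, with ours, and possibly applying Lemma \ref{lem:change_order_contour} to reorder the nested contours. I would attempt this only after establishing the direct argument above, which already proves the corollary.
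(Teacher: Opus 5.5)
Your direct argument is correct, and it is the derivation the paper alludes to in the sentence before the corollary. It is the row reduction from the proof of Theorem \ref{thm:LPP-path-to-point}, and each step checks out. The iterated region is valid because $t_{i-1}\le t_i$. The lower-limit piece of row $i$ is exactly $-M_{i-1,j}$, since $\beta_{i-1}=\beta_i=0$. The $i=1$ piece vanishes because $z^{-j}(z+1)^{\lambda_j-m}=O(z^{-2})$ when $m\ge\lambda_1+1$.

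The proof the paper actually writes out in this subsection is the indirect one, and it never uses Theorem \ref{thm:LPP_path-to-line}. It starts from Theorem \ref{thm:liu22mult}. That formula is not a Fredholm determinant with a ready-made kernel. It is an $(N-1)$-fold $z$-integral of the series $\mathcal{D}$ of Cauchy-determinant integrals. Most of the work lies in the following steps:
\begin{enumerate}[(i)]
\item evaluating the $z$-integrals and showing that only $n_i-n_{i+1}\in\{0,1\}$ survives (Lemma \ref{lm:simplifiedcolmult});
\item reorganizing the $v$-integrals and collapsing $h_i$ (Lemma \ref{lm:Dnsimplisimpli});
\item applying the generalized Andreief identity and resumming to $\det(\rI_N+M)$;
\item conjugating to $\det(\rI+K)$ in \eqref{eq:simplified_Llambda}.
\end{enumerate}
Only after this does Theorem \ref{prop:main} identify $K$ with the Sch\"utz-type determinant.

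Your sketch of the indirect route has the right specialization. Your $f_j,g_j$ correspond to $\lbd=1$ rather than the paper's $\lbd=0$. This changes the kernel only by the harmless conjugation factor $(u_2+1)/(u_1+1)$. However, ``matching with Liu's kernel'' misplaces the difficulty: that kernel has to be extracted from $\mathcal{D}$ first.

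As for what each route buys: yours is two lines but rests entirely on the density formula of \cite{Hong-Liu-Tripathi26}, which is proved elsewhere. The paper's route is long, but it gives an independent derivation of the homogeneous case of Theorem \ref{thm:LPP_path-to-line} (differentiate in the $t_i$) from a previously known TASEP formula. It also exhibits Theorem \ref{prop:main} with non-constant $P_i$, which is the point of the section.
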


The main goal of this subsection is to provide an indirect proof of the above corollary using Theorem \ref{prop:main} and the following result from \cite{Liu22a}. Note that the original statement in \cite{Liu22a} is more general, concerning  the multipoint distribution of TASEP with a general initial condition. We translate it into the language of DLPP for the specific probability of interest in this application.

\begin{thm}[\cite{Liu22a}]
\label{thm:liu22mult}
Assume $m \ge \lambda_1+1$, and $0 \le t_1 \le t_2 \le \cdots \le t_N$. We have
\begin{equation}
\label{eq:liu22mult}
    \prob \left( \bigcap_{i=1}^N \left\{\ELPP_{\boldsymbol{\lambda}}(m, i) \le t_i\right\}\right)= \oint_0 \cdots \oint_0  \mathcal{D}(z_1,\ldots,z_{N-1}) \prod_{\ell=1}^{N-1} \frac{\rd z_\ell}{2\pi \ri z_\ell(1-z_\ell )}.
\end{equation}
where $\mathcal{D}(z_1,\ldots,z_{N-1})$ is defined in terms of series expansion in Definition \ref{defn:Dlambda}. Moreover, the notation $\oint_a$ denotes integration along a small circle around $z=a$, with the counterclockwise orientation.
\end{thm}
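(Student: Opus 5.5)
Theorem \ref{thm:liu22mult} is not a new result; it is a translation of the TASEP multipoint formula of \cite{Liu22a} into the DLPP language of this paper. The plan is to make the standard TASEP--DLPP coupling explicit and check that the parameters and event match those in \cite{Liu22a}.

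\textbf{Step 1: the coupling.} Take the homogeneous model, with $w_{i,j}$ i.i.d.\ $\mathrm{Exp}(1)$. Let the $j$-th TASEP particle make its $k$-th jump at an exponential clock indexed by the lattice site in row $j$, column $\lambda_j+k$. With this choice, the recursion
\begin{equation*}
\ELPP(\mathbf{q}) = w_{\mathbf{q}} + \max\{\ELPP(\mathbf{q}-e_1), \ELPP(\mathbf{q}-e_2)\}
\end{equation*}
coincides with the rule ``particle $j$ jumps once its clock rings and the site ahead is free''. The boundary values are $-\infty$ to the left of the initial path $\{(\lambda_j+1,j)\}$ and $0$ just before it. Under this coupling, $\ELPP_{\boldsymbol{\lambda}}(m,i)$ is the time at which particle $i$ completes its $(m-\lambda_i)$-th jump. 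The initial positions are $y_i=\lambda_i-i+1$, matching the convention recalled after Definition \ref{def:inhomogeneous_ch}, and $\lambda_1\ge\lambda_2\ge\cdots$ makes $y$ strictly decreasing, as TASEP requires. Hence
\begin{equation*}
\{\ELPP_{\boldsymbol{\lambda}}(m,i)\le t_i\}=\{x_i(t_i)\ge m-i+1\}.
\end{equation*}

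\textbf{Step 2: reduction to finitely many particles.} Up-right paths ending in row $i$ only see the rows $1,\ldots,i$. Therefore the event depends only on $\lambda_1,\ldots,\lambda_N$, and we may extend $\boldsymbol{\lambda}$ arbitrarily below row $N$ (for example, by a step profile) so that the hypotheses of \cite{Liu22a} on the initial condition hold.

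\textbf{Step 3: invoking \cite{Liu22a}.} The multipoint theorem of \cite{Liu22a} applies to events $\bigcap_i\{x_{k_i}(t_i)\ge a_i\}$ with ordered times; here $k_i=i$, $a_i=m-i+1$, and $0\le t_1\le\cdots\le t_N$. It gives an $(N-1)$-fold contour integral, over small circles around $0$ with measure $\frac{\rd z_\ell}{2\pi\ri z_\ell(1-z_\ell)}$, of a Fredholm-type function $\mathcal{D}$. The function $\mathcal{D}$ is built from the characteristic function of $Y$ together with the parameters $(k_i,a_i,t_i)$.

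\textbf{Step 4 (main obstacle): matching $\mathcal{D}$.} One then has to check that the function produced in Step 3 coincides with the series in Definition \ref{defn:Dlambda}. This means tracking sign and orientation conventions, the shift $y_i=\lambda_i-i+1$ (equivalently $\hat G_k=G_k-k$), and the replacement of the strict event $\{x_i(t_i)>a_i-1\}$ by the non-strict one. Any boundary mismatch there should be resolved by an index shift in $a_i$. I expect no analytic difficulty, since everything is already proved in \cite{Liu22a}; the work is purely careful bookkeeping of conventions.
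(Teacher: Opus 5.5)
Your proposal matches what the paper does: the paper gives no independent proof and simply translates the general-initial-condition TASEP multipoint formula of \cite{Liu22a} into DLPP through the standard coupling, with $y_i=\lambda_i-i+1$, $k_i=i$ and $a_i=m-i+1$. Then $a_i+k_i=m+1$, which is exactly what produces the functions $G_i$ in \eqref{eq:defG}. The one input in your Step 4 that is more than bookkeeping is identifying the hitting-expectation $\ch$ used in Definition \ref{defn:Dlambda} (with $\lbd=0$, $\cc=-1/2$) with the characteristic function of \cite{Liu22a}; the paper takes this identification from \cite[Theorem 3.4]{Liao-Liu25}.
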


To introduce the formula for $\mathcal{D}(z_1,\ldots,z_{N-1})$ appearing in \eqref{eq:liu22mult}, we first introduce some notation.
Given $W=(w_1,\ldots,w_n)\in \mathbb{C}^n$ and $W'=(w_1',\ldots,w_m')\in \mathbb{C}^m$, we denote 
\begin{equation}
W\sqcup W':= (w_1,\ldots,w_n, w_1',\ldots,w_m')\in \mathbb{C}^{m+n}.
\end{equation}
Assume in addition that $n=m$ and $w_i\neq w_j'$ for all $1\leq i,j\leq n$. We denote 
\begin{equation}
\rC(W;W'):= \det\begin{bmatrix}
\displaystyle\frac{1}{w_i-w_j'}
\end{bmatrix}_{1\leq i,j\leq n} = (-1)^{\frac{n(n-1)}{2}}\frac{\prod_{1\leq i<j\leq n}(w_j-w_i)(w_j'-w_i')}{\prod_{1\leq i,j\leq n}(w_i-w_j')},
\end{equation}
which is the usual Cauchy determinant.

Consider the following two simply connected regions of $\complexC$:
\begin{equation}\label{eq:regions}
\Omega_\LL:=\left\{w\in\mathbb{C}: |w+1|<\frac{1}{2}-\epsilon\right\},\quad \Omega_\RR:=\left\{w\in\mathbb{C}: |w|<\frac{1}{2}-\epsilon\right\},
\end{equation}
where $\epsilon>0$ is a small constant.

Suppose $\Sigma_{N,\LL}^{\out},\ldots,\Sigma_{2,\LL}^{\out}$, $\Sigma_{1,\LL}$, $\Sigma_{2,\LL}^{\inn},\ldots,\Sigma_{N,\LL}^\inn$ are $2N-1$ nested simple closed contours, from outside to inside, in $\Omega_\LL$ enclosing the point $-1$. Similarly,
$\Sigma_{N,\RR}^{\out},\ldots,\Sigma_{2,\RR}^{\out}$, $\Sigma_{1,\RR}$, $\Sigma_{2,\RR}^{\inn},\ldots,\Sigma_{N,\RR}^\inn$ are $2N-1$ nested simple closed contours, from outside to inside, in $\Omega_\RR$ enclosing the point $0$.  See Figure~\ref{fig:contours_finite_time} for an illustration of the contours. These contours are all counterclockwise oriented. 

	\begin{figure}[t]
 	\centering
 	\begin{tikzpicture}[scale=1]
 	\draw [line width=0.4mm,lightgray] (-2,0)--(7,0) node [pos=1,right,black] {$\realR$};
 	\draw [line width=0.4mm,lightgray] (4.5,-2.5)--(4.5,2.5) node [pos=1,above,black] {$\ri\realR$};
 	\fill (2.5,0) circle[radius=1.5pt] node [below,shift={(0pt,-3pt)}] {$-\frac{1}{2}$};
    \fill (4.5,0) circle[radius=2.2pt] node [below,shift={(0pt,-8pt)}] {$0$};
    \fill (0.5,0) circle[radius=2.2pt] node [below,shift={(0pt,-8pt)}] {$-1$};
        \draw[dashed] (4.5,0) circle (50pt);
        \draw[dashed] (0.5,0) circle (50pt);
        \draw[blue, thick] (4.5,0) circle (30pt);
        \draw[black, thick] (4.5,0) circle (20pt);
        \draw[blue, thick] (4.5,0) circle (10pt);
        \draw[black, thick] (0.5,0) circle (30pt);
        \draw[blue, thick] (0.5,0) circle (20pt);
        \draw[black, thick] (0.5,0) circle (10pt);
                \node[text width=0.1cm,font=\bfseries] at (-0.8,1.2) {\scriptsize $\Omega_\LL$};
                 \node[text width=0.1cm,font=\bfseries] at (5.55,1.2) {\scriptsize $\Omega_\RR$};
			\node[text width=0.1cm,font=\bfseries, blue] at (4.5,1.3) {\scriptsize $\Sigma_{2,\RR}^\out$};
                \node[text width=0.1cm,font=\bfseries] at (3.9,0.7) {\scriptsize $\Sigma_{1,\RR}$};
                \node[text width=0.1cm,font=\bfseries, blue] at (4.6,0.35) {\scriptsize $\Sigma_{2,\RR}^\inn$};
			\node[text width=0.1cm,font=\bfseries] at (0.5,1.3) {\scriptsize $\Sigma_{2,\LL}^\out$};
            \node[text width=0.1cm,font=\bfseries, blue] at (0.7,0.7) {\scriptsize $\Sigma_{1,\LL}$};
                \node[text width=0.1cm,font=\bfseries] at (0.1,0.35) {\scriptsize $\Sigma_{2,\LL}^\inn$};
		\end{tikzpicture}

 	\caption{Illustration of the contours for $N=2$: The regions $\Omega_\LL$ and $\Omega_\RR$ are the interior of the two dashed circles, from left to right; the three contours around $-1$ from outside to inside are $\Sigma_{2,\LL}^{\out},\Sigma_{1,\LL},\Sigma_{2,\LL}^{\inn}$ respectively; the three contours around $0$ from outside to inside are $\Sigma_{2,\RR}^{\out},\Sigma_{1,\RR},\Sigma_{2,\RR}^{\inn}$ respectively.}    
    \label{fig:contours_finite_time}
\end{figure}	

\begin{defn}[Definition of $\mathcal{D} (z_1,\ldots,z_{N-1})$] \label{defn:Dlambda}
The series expansion of $\mathcal{D}(z_1,\ldots,z_{N-1})$ is given by
\begin{equation}
\label{eq:mcDz1zN-1serexp}
    \mathcal{D} (z_1,\ldots,z_{N-1}) = \sum_{\bn \in (\intZ_{\ge0})^N} \frac{(-1)^{|\bn|}}{(\bn!)^2} \mathcal{D}_{\bn }(z_1,\ldots,z_{N-1})
\end{equation}
where
\begin{equation}
\label{eq:mcDnlambda}
\begin{split}
    \mathcal{D}_{\bn }&=\mathcal{D}_{\bn }(z_1,\ldots,z_{N-1}) \\
    &= \prod_{i=1}^{N-1} (1-z_i)^{n_i}(1-z_i^{-1})^{n_{i+1}} \\
    & \quad \cdot \prod_{i=2}^{N} \prod_{\ell_i=1}^{n_i} \left(\frac{1}{1-z_{i-1}} \int_{\Sigma^\inn_{i,\LL}} \ddbar{u^{(i)}_{\ell_i}}{} - \frac{z_{i-1}}{1-z_{i-1}} \int_{\Sigma^\out_{i,\LL}} \ddbar{u^{(i)}_{\ell_i}}{}  \right) \prod_{\ell_1=1}^{n_1} \int_{\Sigma_{1,\LL}} \ddbar{u^{(1)}_{\ell_1}}{} \\
    &\quad \cdot  \prod_{i=2}^{N} \prod_{\ell_i=1}^{n_i} \left(\frac{1}{1-z_{i-1}} \int_{\Sigma^\inn_{i,\RR}} \ddbar{v^{(i)}_{\ell_i}}{} - \frac{z_{i-1}}{1-z_{i-1}} \int_{\Sigma^\out_{i,\RR}} \ddbar{v^{(i)}_{\ell_i}}{}  \right) \prod_{\ell_1=1}^{n_1} \int_{\Sigma_{1,\RR}} \ddbar{v^{(1)}_{\ell_1}}{} \\
    &\quad \cdot \prod_{i=1}^{N} \prod_{\ell_i=1}^{n_i} \frac{G_i(u^{(i)}_{\ell_i})}{G_i(v^{(i)}_{\ell_i})} \cdot \det \left[ \ch (v^{(1)}_i, u^{(1)}_j) \right]_{i,j=1}^{n_1} \cdot \prod_{i=1}^{N-1}\rC(U^{(i)}\sqcup V^{(i+1)}; V^{(i)}\sqcup U^{(i+1)}) \cdot \rC(U^{(N)}; V^{(N)}),
\end{split}
\end{equation}
where $\bn = (n_1, \ldots, n_N)$, $|\bn| := n_1+\cdots+n_N$, and $\bn ! := n_1! \cdots n_N!$. The vectors $U^{(\ell)} = (u^{(\ell)}_1, \ldots, u^{(\ell)}_{n_\ell})$ and $V^{(\ell)} = (v^{(\ell)}_1, \ldots, v^{(\ell)}_{n_\ell})$ for $1 \le \ell \le N$. $\ch(v, u)$ is the characteristic function defined in \eqref{eq:homogeneous_characteristic_function} with $\lbd=0$ and $p=-1/2$, and the functions $G_i$ are given by
\begin{equation}
\label{eq:defG}
G_i(w) = \begin{cases}
w(w+1)^{-m-1}e^{t_1w},  &  \text{if $i=1$}, \\
we^{(t_i-t_{i-1})w},  &  \text{if $i\ge 2$.}
\end{cases}
\end{equation}
\end{defn}

Note that the formula for the function $G_i$ is remarkably simple. This simplicity arises directly from the column structure of the multipoint distribution considered in Theorem \ref{thm:liu22mult}. Consequently, we can simplify the multipoint distributions in \eqref{eq:liu22mult}. The above formula for $\mathcal{D}(z_1,\ldots,z_{N-1})$ is expressed as an infinite sum. However, it is not difficult to show that when $n_\ell>N$, the integral on the right-hand side of \eqref{eq:mcDnlambda} is zero. For a proof of this fact, we refer the reader to \cite[Remark 2.8]{Liu22a}. Now, recalling \eqref{thm:liu22mult} and interchanging the order of integration and summation, we obtain
\begin{equation}
\label{eq:LPPsimple}
\begin{split}
\prob \left( \bigcap_{i=1}^N \left\{\ELPP_{\boldsymbol{\lambda}}(m, i) \le t_i\right\}\right) = \sum_{\bn \in (\intZ_{\ge0})^N} \frac{(-1)^{|\bn|}}{(\bn!)^2} \mathrm{D}_{\bn },
\end{split}
\end{equation}
where
\begin{equation}
\mathrm{D}_{\bn }:= \oint_0 \cdots \oint_0  \mathcal{D}_{\bn}(z_1,\ldots,z_{N-1}) \prod_{\ell=1}^{N-1} \frac{\rd z_\ell}{2\pi \ri z_\ell(1-z_\ell )}.
\end{equation}

Below we show that the $\mathrm{D}_\bn$ function can be simplified. We remark that a similar simplification strategy was developed in \cite{Liu-Ortiz25}, where the authors considered the integrals in the multipoint distribution formula for the KPZ fixed point obtained in \cite{Liu22a} when all the time parameters are equal. In fact, the structure of our function $\mathrm{D}_{\bn}$ is similar to that of the function $\hat{\mathcal{D}}_{\bn}$ in \cite{Liu-Ortiz25}. Moreover, the statements and proofs of Lemmas \ref{lm:simplifiedcolmult} and \ref{lm:Dnsimplisimpli} are similar to those of Lemmas 2.3 and 2.4, respectively, in \cite{Liu-Ortiz25}. However, there are still differences in the integration contours and the functions involved, which lead to different restrictions on indices that make nontrivial contributions.  Below we include the details of the arguments that are affected by these differences and omit the parts similar to \cite{Liu-Ortiz25}.

\medskip

We first evaluate the $z$-integrals in \eqref{eq:liu22mult} by simplifying the $u$-integrals in $\mathrm{D}_{\bn }$, which yields a more compact formula stated in Lemma \ref{lm:simplifiedcolmult} below.

For notational convenience, we set
\begin{equation}
\Sigma^\out_{1,\LL} = \Sigma_{1,\LL}, \quad \Sigma^\inn_{1,\RR} = \Sigma_{1,\RR},
\end{equation}
throughout this subsection.

\begin{lm}
\label{lm:simplifiedcolmult}
Under the same assumptions as in Theorem \ref{thm:liu22mult}, 
\begin{enumerate} [(i)] 
\item 
we have
\begin{equation}
\label{eq:rDnlambda}
\begin{split}
\mathrm{D}_{\bn } &= \prod_{i=1}^N \prod_{\ell_i=1}^{n_i} \int_{\Sigma^\out_{i,\LL}} \ddbar{u^{(i)}_{\ell_i}}{}\int_{\Sigma^\inn_{i,\RR}} \ddbar{v^{(i)}_{\ell_i}}{} \prod_{i=1}^{N} \prod_{\ell_i=1}^{n_i} \frac{G_i(u^{(i)}_{\ell_i})}{G_i(v^{(i)}_{\ell_i})}\\
&\quad \cdot \det \left[ \ch (v^{(1)}_i, u^{(1)}_j) \right]_{i,j=1}^{n_1} \cdot \prod_{i=1}^{N-1}\rC(U^{(i)}\sqcup V^{(i+1)}; V^{(i)}\sqcup U^{(i+1)}) \cdot \rC(U^{(N)}; V^{(N)}),
\end{split}
\end{equation}
where $G_i$ is defined in \eqref{eq:defG}.

\item if $k_i :=n_i - n_{i+1} \in \{0,1\}$ for all $1 \le i \le N$, with the convention that $n_{N+1} = 0$, we have
\begin{equation}
\label{eq:rDnlambdasimplified1}
\begin{split}
\mathrm{D}_{\bn } 
&= \prod_{i=1}^{N-1}\frac{n_i!}{k_i!}\prod_{i=1}^N \prod_{\hat{\ell}_i=1}^{k_i} \int_{\Sigma_{1,\LL}} \ddbar{\hat{u}^{(i)}_{\hat{\ell}_i}}{}\prod_{i=1}^N \prod_{\ell_i=1}^{n_i} \int_{\Sigma^\inn_{i,\RR}} \ddbar{v^{(i)}_{\ell_i}}{} \\
& \quad \cdot  \det \left[ \ch(v^{(1)}_i, \hat{u}_j) \right]_{i,j=1}^{n_1}  \cdot \prod_{i=1}^{N}\rC(V^{(i+1)} \sqcup \hat{U}^{(i)}; V^{(i)}) \cdot \prod_{i=1}^{N}  \frac{\prod_{\hat{\ell}_i=1}^{k_i}g_i(\hat{u}^{(i)}_{\hat{\ell}_i})}{\prod_{\ell_i=1}^{n_i}G_i(v^{(i)}_{\ell_i})}, 
\end{split}
\end{equation}
where $\hat{U}^{(i)} = (\hat{u}^{(i)}_1, \ldots, \hat{u}^{(i)}_{k_i})$ for $1 \le i \le N$, $\hat{U} = \hat{U}^{(N)} \sqcup  \cdots \sqcup \hat{U}^{(1)} = (\hat{u}_1,\ldots, \hat{u}_{n_1})$, and $G_i$ is defined in \eqref{eq:defG}. The functions $g_i$ are defined as
\begin{equation}
\label{eq:defg}
    g_i(w) = \prod_{k=1}^i G_k(w) = w^i (w+1)^{-m-1}e^{t_iw}. 
\end{equation}
Moreover, for all other $\bn$ we have $\mathrm{D}_{\bn } = 0$.

\end{enumerate}
\end{lm}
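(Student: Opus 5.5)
Part (i) and part (ii) need different arguments, so I would prove them in turn.

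\textbf{Part (i).} I would carry out the $z$-integrals in $\mathrm{D}_\bn$ one variable at a time. For each $i\ge 2$, the variable $z_{i-1}$ enters $\mathcal{D}_\bn$ only through three places:
\begin{itemize}
\item the prefactor $(1-z_{i-1})^{n_{i-1}}(1-z_{i-1}^{-1})^{n_i}$;
\item the $n_i$ operator factors $\frac{1}{1-z_{i-1}}\int_{\Sigma^\inn_{i,\LL}}-\frac{z_{i-1}}{1-z_{i-1}}\int_{\Sigma^\out_{i,\LL}}$ acting on the $u^{(i)}$ variables;
\item the $n_i$ analogous factors acting on the $v^{(i)}$ variables.
\end{itemize}
Following the strategy of \cite{Liu-Ortiz25}, I would expand each such factor binomially. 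This writes the integrand as a sum over subsets of the $u^{(i)}$ that sit on the inner contour and subsets of the $v^{(i)}$ that sit on the outer contour. Each summand is a Laurent monomial in $z_{i-1}$ times $(1-z_{i-1})^{-1}$-type factors. Integrating against $\frac{\rd z_{i-1}}{2\pi\ri z_{i-1}(1-z_{i-1})}$ around $0$ extracts a single coefficient.

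The second step is to deform contours and check cancellations. I would move every $u^{(i)}$ on $\Sigma^\inn_{i,\LL}$ to $\Sigma^\out_{i,\LL}$, and every $v^{(i)}$ on $\Sigma^\out_{i,\RR}$ to $\Sigma^\inn_{i,\RR}$. The residues picked up in these deformations come from the Cauchy determinants, at poles $u^{(i)}=v^{(i)}$ or $u^{(i)}=u^{(i\pm1)}$ type. Here the location of the poles matters: the $u$ and $v$ variables lie in the disjoint regions $\Omega_\LL$ and $\Omega_\RR$, so only the $u$–$u$ and $v$–$v$ poles of $\rC(U^{(i)}\sqcup V^{(i+1)};V^{(i)}\sqcup U^{(i+1)})$ can be crossed. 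I would show that, after summing over subsets and evaluating the $z$-coefficient, all residue contributions cancel. What remains is the single configuration with all $u^{(i)}$ on the outer contour and all $v^{(i)}$ on the inner one, which is exactly \eqref{eq:rDnlambda}. I expect this cancellation bookkeeping to be the main obstacle. I would try to reproduce the combinatorial identity in \cite[Lemma 2.3]{Liu-Ortiz25}, adapted to the contour ordering of Figure~\ref{fig:contours_finite_time}.

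\textbf{Part (ii).} Starting from \eqref{eq:rDnlambda}, I would evaluate the $u^{(N)}$-integrals first, then $u^{(N-1)}$, and so on downward. Since $\Sigma^\out_{i,\LL}$ encloses $\Sigma^\out_{i-1,\LL}$ and $G_i(u)=ue^{(t_i-t_{i-1})u}$ is entire for $i\ge 2$, each $u^{(i)}_\ell$ can be shrunk inward. The only poles it meets are at the $u^{(i-1)}$ variables, coming from the Cauchy factor $\rC(U^{(i-1)}\sqcup V^{(i)};V^{(i-1)}\sqcup U^{(i)})$. If no pole is taken the integral vanishes, so each $u^{(i)}_\ell$ must collapse onto a distinct $u^{(i-1)}_{\ell'}$.

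Collapsing multiplies the $G$'s, producing $g_i=\prod_{k\le i}G_k$ at the surviving variable. It also reduces the two Cauchy determinants via the standard residue identity $\mathrm{Res}_{u=u'}\rC(\dots)\rC(\dots)$, which yields $\rC(V^{(i+1)}\sqcup\hat U^{(i)};V^{(i)})$. Injectivity of this collapsing forces $n_i\le n_{i-1}$. A double pole arises whenever two levels collapse onto the same point, and antisymmetry of the Cauchy determinant kills it; this forces $k_i=n_i-n_{i+1}\in\{0,1\}$, matching the analogous restriction in \cite{Liu-Ortiz25}. Counting the choices of which variables collapse gives the factor $n_i!/k_i!$. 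The surviving free variables are then all located on $\Sigma_{1,\LL}$, and they are exactly the $\hat u$ of \eqref{eq:rDnlambdasimplified1}.
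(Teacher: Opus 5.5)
Your proposal has two genuine gaps, one in each part.

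\textbf{Part (i).} You flag the key step as the main obstacle and leave it unproved: that all residues from moving inner $u$-contours outward and outer $v$-contours inward cancel after extracting the $z$-coefficient. You are missing the simple fact that makes this step unnecessary. Every term of the expansion in which some $u^{(i)}_{\ell_i}$ lies on $\Sigma^\inn_{i,\LL}$ is identically zero. To see this, take the largest such $i$ (so $i\ge 2$). Then:
\begin{itemize}
\item all $u^{(i+1)}$ lie on $\Sigma^\out_{i+1,\LL}$;
\item the $u^{(i-1)}$ lie on $\Sigma^\inn_{i-1,\LL}$, $\Sigma_{1,\LL}$ or $\Sigma^\out_{i-1,\LL}$, all outside $\Sigma^\inn_{i,\LL}$;
\item the $v$'s lie in $\Omega_\RR$, and there are no $u^{(i)}$--$u^{(i)}$ denominators;
\item $G_i(u)=ue^{(t_i-t_{i-1})u}$ is entire.
\end{itemize}
So the integrand is analytic inside $\Sigma^\inn_{i,\LL}$ in that variable, and the integral vanishes.

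In the one surviving term, the $u$-operators contribute $\bigl(-\frac{z_{i-1}}{1-z_{i-1}}\bigr)^{n_i}$, which exactly cancels $(1-z_{i-1}^{-1})^{n_i}$. What remains is $\prod_i (1-z_i)^{n_i}$ times the $v$-operators, and this is analytic at $z_i=0$. Integrating against $\frac{\rd z_i}{2\pi\ri z_i(1-z_i)}$ therefore just sets $z_i=0$, which turns each $v$-operator into $\int_{\Sigma^\inn_{i,\RR}}$. No contour deformation or residue bookkeeping is needed on either the $u$- or the $v$-side.

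\textbf{Part (ii).} Your top-down collapse of the $u^{(i)}$ onto distinct $u^{(i-1)}$ is the paper's route (via \cite{Liu-Ortiz25}). It correctly yields $n_1\ge\cdots\ge n_N$, the functions $g_i$, and the factors $n_i!/k_i!$.

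However, your argument for $k_i\le 1$ fails. There is no double pole on the $u$-side: only one Cauchy determinant, $\rC(U^{(i-1)}\sqcup V^{(i)};V^{(i-1)}\sqcup U^{(i)})$, couples $U^{(i-1)}$ to $U^{(i)}$. Moreover, \eqref{eq:rDnlambdasimplified1} is well defined for every $k_i\ge 0$, so nothing in the $u$-integrations excludes $k_i\ge 2$.

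The bound comes from the $v$-integrals in \eqref{eq:rDnlambda}. Inside $\Sigma^\inn_{i,\RR}$, the variable $v^{(i)}_{\ell_i}$ sees only two kinds of pole:
\begin{itemize}
\item the simple pole at $0$ from $1/G_i$;
\item the simple poles at the $v^{(i+1)}_{\ell_{i+1}}$ from $\rC(U^{(i)}\sqcup V^{(i+1)};V^{(i)}\sqcup U^{(i+1)})$.
\end{itemize}
Here $\ch$ is analytic in $v$, and the $u$'s and the $v^{(i-1)}$'s lie outside the contour. The Vandermonde factor in $V^{(i)}$ kills any configuration in which two $v^{(i)}$'s take their residue at the same pole. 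Since only $n_{i+1}+1$ poles are available, $\mathrm{D}_\bn=0$ unless $n_i\le n_{i+1}+1$; for $i=N$ this gives $n_N\le 1$.
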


\begin{proof}

The proof is essentially the same as that of \cite[Lemma 2.3]{Liu-Ortiz25}, except that here we work with the $u$-variables instead. Note that the contours in \cite{Liu-Ortiz25} are infinite, but they have a nesting structure (with respect to $-\infty$ for the left contours and $+\infty$ for the right contours) similar to that of the contours in \eqref{eq:mcDnlambda}.

\begin{enumerate} [(i)]
\item 
We expand $u$-integrals in \eqref{eq:mcDnlambda} by only taking one contour for each variable in one term. Consider one such integral. If any of the $u^{(i)}_{\ell_i}$ contours is $\Sigma^{\inn}_{i,\LL}$, then consider the integral corresponding to the largest such $i$. Assume $i \ge 2$. Since the only possible poles of $u^{(i)}_{\ell_i}$ are $v^{(j)}_{\ell_j}$'s in the right half plane, $u^{(i-1)}_{\ell_{i-1}} \in \Sigma^{\inn}_{i-1,\LL} \cup \Sigma^{\out}_{i-1,\LL}$, and $u^{(i+1)}_{\ell_{i+1}} \in \Sigma^{\out}_{i+1,\LL}$, the integrand, viewed as a function of $u^{(i)}_{\ell_i}$, is analytic inside $\Sigma^{\inn}_{i,\LL}$. Hence, the integral is zero. Therefore, the integral vanishes if any $\Sigma^{\inn}_{i,\LL}$ is included in the term from expansion of \eqref{eq:mcDnlambda}. Now we drop all these vanishing terms and have only one surviving term
\begin{equation*}
\mathcal{D}_{\bn} = \prod_{i=1}^{N-1} (1-z_i)^{n_i}\prod_{i=2}^{N} \prod_{\ell_i=1}^{n_i} \left(\frac{1}{1-z_{i-1}} \int_{\Sigma^\inn_{i,\RR}} \ddbar{v^{(i)}_{\ell_i}}{} - \frac{z_{i-1}}{1-z_{i-1}} \int_{\Sigma^\out_{i,\RR}} \ddbar{v^{(i)}_{\ell_i}}{}  \right) \int_{\Sigma^\out_{i,\LL}} \ddbar{u^{(i)}_{\ell_i}}{} \cdots,
\end{equation*}
where we suppressed the integrand, which is not relevant to the $z$-variables. The function above is analytic in $z_i$ at the origin for each $i$. Hence, evaluating the $z-$integrals $\oint_0 \cdots \oint_0  \mathcal{D}_{\bn} \prod_{\ell=1}^{N-1} \frac{\rd z_\ell}{2\pi \ri z_\ell(1-z_\ell )}$ is the same as inserting $z_i=0$ in the expression above. This yields \eqref{eq:rDnlambda}.

\item 
To prove (ii), we start with the formula \eqref{eq:rDnlambda} and evaluate the $u$-integrals successively. The proof is basically the same as that of \cite[Lemma 2.3(ii)]{Liu-Ortiz25}, and it yields \eqref{eq:rDnlambdasimplified1} together with
$n_1 \ge n_2 \ge \cdots \ge n_N$. We omit this part of the proof.
It remains to establish the additional fact that $n_i \le n_{i+1}+1$ for all $1 \le i \le N$, which we prove next.

Consider the formula \eqref{eq:rDnlambda}. We first evaluate the integrals with respect to $v^{(1)}_{\ell_1}$ along the contour $\Sigma^{\inn}_{1,\RR}$. Note that the only poles of $v^{(1)}_{\ell_1}$ inside $\Sigma^{\inn}_{1,\RR}$ are the simple poles at $v^{(1)}_{\ell_1}=0$ and at $v^{(1)}_{\ell_1}=v^{(2)}_{\ell_2}$, arising from the Cauchy determinant $\rC(U^{(1)} \sqcup V^{(2)}; V^{(1)} \sqcup U^{(2)})$. Moreover, this Cauchy determinant vanishes if two distinct $v^{(1)}_{\ell_1}$ variables are evaluated at the same pole. Since there are $n_1$ variables $v^{(1)}_{\ell_1}$ and only $n_2+1$ possible poles, namely $0$ and $v^{(2)}_{\ell_2}$ for $1\leq \ell_2\leq n_2$, it follows that $\mathrm{D}_{\bn}=0$ unless $n_1\leq n_2+1$. Continuing the same argument successively for the variables $v^{(i)}_{\ell_i}$, $i=2,\ldots,N$, we find that $\mathrm{D}_{\bn}=0$ unless $n_i\le n_{i+1}+1$ for all $1\leq i\leq N$. Combining this with the fact that $n_1 \ge n_2 \ge \cdots \ge n_N$, we conclude that $\mathrm{D}_{\bn}=0$ unless $k_i=n_i-n_{i+1}\in \{0,1\}$.
\end{enumerate}
\end{proof}

The next step is to further simplify \eqref{eq:rDnlambdasimplified1} by reorganizing the $v$-integrals. We assume that $n_i-n_{i+1}\in \{0,1\}$ for each $1 \leq i \leq N$, since otherwise $\mathrm{D}_{\bn}=0$, as shown in Lemma \ref{lm:simplifiedcolmult} (ii).

\begin{lm}
\label{lm:Dnsimplisimpli}
Assume $k_i = n_i - n_{i+1} \in \{0,1\}$ for each $i=1,\ldots,N$ where we set $n_{N+1} = 0$ for convenience. We have
\begin{equation}
\label{eq:rDnlambdasimplified2}
\begin{split}
    &\mathrm{D}_{\bn} \\
    &= \left (\prod_{i=1}^{N-1}\frac{n_i!}{k_i!} \right)^2 \prod_{i=1}^N \prod_{\hat{\ell}_i=1}^{k_i} \int_{\Sigma_{1,\LL}} \ddbar{\hat{u}^{(i)}_{\hat{\ell}_i}}{} \int_{\Sigma_{1,\RR}} \ddbar{\hat{v}^{(i)}_{\hat{\ell}_i}}{}   \det \left[ \ch(\hat{v}_i, \hat{u}_j) \right]_{i,j=1}^{n_1}  \prod_{i=1}^N \det \left[ h_i (\hat{u}^{(i)}_a, \hat{v}^{(i)}_b) \right]_{a,b=1}^{k_i}  \prod_{i=1}^{N}  \prod_{\hat{\ell}_i=1}^{k_i}g_i(\hat{u}^{(i)}_{\hat{\ell}_i}),
\end{split}
\end{equation}
where $\hat{U}^{(i)} = (\hat{u}^{(i)}_1, \ldots, \hat{u}^{(i)}_{k_i})$, $\hat{V}^{(i)} = (\hat{v}^{(i)}_1, \ldots, \hat{v}^{(i)}_{k_i})$ and $\hat{U} = \hat{U}^{(N)} \sqcup  \cdots \sqcup \hat{U}^{(1)} = (\hat{u}_1,\ldots, \hat{u}_{n_1})$, $\hat{V} = \hat{V}^{(N)} \sqcup  \cdots \sqcup \hat{V}^{(1)} = (\hat{v}_1,\ldots, \hat{v}_{n_1})$ and $g_i$ is defined in \eqref{eq:defg}. The functions $h_i(u,v)$ are defined for all $(u,v) \in \Sigma_{1,\LL} \times \Sigma_{1,\RR}$ as follows
\begin{equation}
\label{eq:defh}
h_i(u,v) = 
\begin{cases}
\displaystyle \frac{1}{G_1(v)(u-v)}, & i=1, \\
\displaystyle \prod_{j=2}^i \int_{\Sigma^\inn_{j,\RR}} \ddbar{v_{j}}{} \frac{1}{(v_2-v) \cdot \prod_{j=2}^{i-1}(v_{j+1} - v_{j}) \cdot (u-v_i)} \cdot \frac{1}{\prod_{j=2}^i G_j(v_j) \cdot G_1(v)}, & 2 \le i \le N,
\end{cases}
\end{equation}
where $G_i$ is defined in \eqref{eq:defG}.
\end{lm}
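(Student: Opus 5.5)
Starting from \eqref{eq:rDnlambdasimplified1} of Lemma \ref{lm:simplifiedcolmult}(ii), the plan is to integrate out all $v^{(i)}_{\ell_i}$ variables except one new variable $\hat v^{(i)}_{\hat\ell_i}$ attached to each $\hat u^{(i)}_{\hat \ell_i}$. I would proceed level by level from $i=N$ down to $i=1$, in the spirit of \cite[Lemma 2.4]{Liu-Ortiz25}. The structural point is that at level $i$ we have $n_i=n_{i+1}+k_i$, and the Cauchy determinant $\rC(V^{(i+1)}\sqcup\hat U^{(i)};V^{(i)})$ pairs each $v^{(i)}$ either with some $v^{(i+1)}$ or with some $\hat u^{(i)}$. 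Expanding it as $\det[1/(x_a-v^{(i)}_b)]$ over permutations, each $v^{(i)}$ paired with a $v^{(i+1)}$ carries the factor $1/(v^{(i+1)}-v^{(i)})$. This is exactly the chain structure appearing in $h_i$ in \eqref{eq:defh}.

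Concretely, I would first symmetrize. Since the integrand is symmetric in the labels of $V^{(i)}$ up to the sign coming from the Cauchy determinants, each term of the permutation expansion can be relabelled to a canonical pairing. The sign changes are compensated because the neighbouring Cauchy determinant $\rC(V^{(i)}\sqcup\hat U^{(i-1)};V^{(i-1)})$ is antisymmetric in $V^{(i)}$. This produces combinatorial factors $n_i!/k_i!$ and leaves, at each level, $n_{i+1}$ chain links $1/(v^{(i+1)}_a-v^{(i)}_a)$ and a $k_i\times k_i$ Cauchy block $\det[1/(\hat u^{(i)}_a-v^{(i)}_{n_{i+1}+b})]$. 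Following each chain downward, every variable $\hat u^{(i)}_a$ becomes connected through a path $v^{(i)}\to v^{(i-1)}\to\cdots\to v^{(1)}$ that ends at a level-one variable. I would rename that terminal variable $\hat v^{(i)}_a$; it is this variable that enters $\det[\ch(v^{(1)}_\cdot,\hat u_\cdot)]$.

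Next, the intermediate chain variables are integrated with $\hat u$ and $\hat v$ held fixed. Collecting the factors $1/G_j(v_j)$ along a chain together with the links, and using the contour ordering $\Sigma^\inn_{j,\RR}$ (inner for larger $j$), the integral over the chain is precisely $h_i(\hat u,\hat v)$ in \eqref{eq:defh}. The sign and permutation structure of the $k_i\times k_i$ block then assembles into $\det[h_i(\hat u^{(i)}_a,\hat v^{(i)}_b)]$. Since the endpoint variables now lie in $\Sigma_{1,\RR}=\Sigma^\inn_{1,\RR}$, the remaining factors are $\det[\ch(\hat v_i,\hat u_j)]$ and $\prod g_i(\hat u)$. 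The second factor $\prod n_i!/k_i!$ comes from the choices of which level-one variable terminates which chain, which accounts for the square in \eqref{eq:rDnlambdasimplified2}.

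The main obstacle is the bookkeeping in this last step. A permutation term might connect chains in a way that is not a simple path, for example a $v^{(i)}$ paired with $v^{(i+1)}$ whose own chain then branches. I would need to show that all such terms either reorganize into the path form or cancel. I would first try the route of \cite{Liu-Ortiz25}: use antisymmetry so that only the canonical pairing survives, with multiplicity $n_i!/k_i!$. I would then check carefully that the nested-contour ordering produces no extra residues at $v^{(j)}=v^{(j+1)}$ beyond those encoded in $h_i$.
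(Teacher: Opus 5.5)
Your proposal is correct and takes essentially the same route as the paper. The paper simply defers to \cite[Lemma 2.4]{Liu-Ortiz25} applied to the $v$-variables, which amounts to what you describe: symmetrize the Cauchy determinants level by level from $i=N$ down to $i=1$, using antisymmetry of $\rC(V^{(i)}\sqcup\hat U^{(i-1)};V^{(i-1)})$ (or of $\det[\ch(v^{(1)}_a,\hat u_b)]$ when $i=1$) to gain $\prod_{i=1}^N n_i!/k_i!$, and then integrate each chain by multilinearity to produce $h_i$. The obstacle you flag does not arise: each Cauchy determinant is square, so every permutation term pairs each variable with exactly one partner above and one below, making the chains automatically disjoint simple paths; and no residues are taken, because $h_i$ is defined on the same contours $\Sigma^\inn_{j,\RR}$.
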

\begin{proof}
The proof is essentially the same as the proof of  \cite[Lemma 2.4]{Liu-Ortiz25}, except that here we reorganize the $v$-integrals rather than the $u$-integrals. We therefore omit the proof.
\end{proof}

Next, we simplify the formula \eqref{eq:defh} for $h_i$. Note that, in this formula, $u$, $v$, and $v_i$ can be chosen so that $|u| > |v| > |v_i|$. Therefore, we may write
\begin{equation}
\label{eq:simplified_hi}
h_i(u,v) = \sum_{\ell=0}^\infty \frac{1}{u^{\ell+1}} \tilde h^{(\ell)}_i(v)
\end{equation}
where
\begin{equation}
\label{eq:tilde_hi}
\tilde h^{(\ell)}_i(v) := 
\begin{dcases}
\frac{v^{\ell}}{G_1(v)}, & i=1, \\
\prod_{j=2}^i \int_{\Sigma^\inn_{j,\RR}} \ddbar{v_{j}}{} \frac{v^\ell_i}{(v_2-v) \cdot \prod_{j=2}^{i-1}(v_{j+1} - v_{j})} \cdot \frac{1}{\prod_{j=2}^i G_j(v_j) \cdot G_1(v)}, & 2 \le i \le N.
\end{dcases}
\end{equation}
Recall the formula \eqref{eq:defG} for $G_i$. For $i \ge 2$ in \eqref{eq:tilde_hi}, note that the integrand, viewed as a function of $v_i$, is analytic inside $\Sigma^\inn_{i,\RR}$ unless $\ell=0$.
Therefore, $\tilde h^{(\ell)}_i(v) = 0$ for $2 \le i \le N$ and $\ell \ge 1$, and hence
\begin{equation}
\label{eq:2simplified_hi}
h_i(u,v) = \1_{i=1}\sum_{\ell=0}^\infty \frac{v^\ell}{u^{\ell+1}} \frac{1}{G_1(v)} + \1_{i \ne 1}\frac{\tilde h^{(0)}_i(v) }{u}.
\end{equation}

Now we need the following generalized Andreief's identity from \cite{Liu-Ortiz25}.

\begin{lm}[Lemma 1.2 of \cite{Liu-Ortiz25}]
\label{lm:genAndreiefId}
Let $I_1,\ldots, I_m$ be a partition of $\{1,\ldots,n\}$. Let $X\subset \complexC$ be a measurable set and $\mu$ be a measure on $X$. Suppose $A_i(x)$ and $B_i(x)$, $1 \leq i \leq n$, are two sequences of functions on $X$ such that $A_i(x)B_j(x)$ is integrable with respect to $\rd\mu$ for all $1 \leq i,j \leq n$. Then we have
\begin{equation}
\label{eq:genAndreiefId}
\det \left[\int_X A_i(x)B_j(x) \rd \mu(x) \right]_{i,j=1}^n = \frac{1}{\prod_{k=1}^m |I_k|!} \int_{X^n} \det \left[ A_i(x_j) \right]_{i,j=1}^n \prod_{k=1}^m \det \left[B_i(x_j) \right]_{i,j\in I_k} \prod_{i=1}^n \rd \mu (x_i).
\end{equation}
\end{lm}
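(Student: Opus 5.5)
The plan is to expand both sides of the identity into sums of integrals over $X^n$ and match them term by term. Throughout, write $S_n$ for the symmetric group and set $\mathcal{S}:=\prod_{k=1}^m S_{I_k}$, where $S_{I_k}$ denotes the permutations of the block $I_k$.

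\textbf{Step 1 (expand the left-hand side).} Apply the Leibniz formula to the left-hand side:
\begin{equation*}
\sum_{\sigma\in S_n}\operatorname{sgn}(\sigma)\prod_{i=1}^n\int_X A_i(x)B_{\sigma(i)}(x)\,\rd\mu(x).
\end{equation*}
Integrability of each $A_iB_j$ lets us use Fubini. We then write each summand as a single integral over $X^n$, attaching the variable $x_{\sigma(i)}$ to the $i$-th factor. This gives
\begin{equation*}
\int_{X^n}\sum_{\sigma}\operatorname{sgn}(\sigma)\prod_i A_i(x_{\sigma(i)})\prod_j B_j(x_j)\prod_j\rd\mu(x_j)=\int_{X^n}\det[A_i(x_j)]\prod_j B_j(x_j)\prod_j\rd\mu(x_j).
\end{equation*}
This is just the usual first step of Andreief's identity.

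\textbf{Step 2 (symmetrize within blocks).} The measure $\prod_j\rd\mu(x_j)$ is invariant under any relabelling of the variables. So for each $\pi\in\mathcal{S}$ we can replace $x_j$ by $x_{\pi(j)}$ in the integral from Step 1. Under this change the factor $\det[A_i(x_j)]$ picks up $\operatorname{sgn}(\pi)$, and $\prod_j B_j(x_j)$ becomes $\prod_j B_j(x_{\pi(j)})$.

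Averaging over all $\pi\in\mathcal{S}$ yields
\begin{equation*}
\frac{1}{\prod_k|I_k|!}\int_{X^n}\det[A_i(x_j)]\sum_{\pi\in\mathcal{S}}\operatorname{sgn}(\pi)\prod_jB_j(x_{\pi(j)})\prod_j\rd\mu(x_j).
\end{equation*}
The sign of $\pi$ factorizes over the blocks, $\operatorname{sgn}(\pi)=\prod_k\operatorname{sgn}(\pi|_{I_k})$, and the product $\prod_j B_j(x_{\pi(j)})$ splits into one product per block. Hence the inner sum factorizes as $\prod_k\det[B_i(x_j)]_{i,j\in I_k}$, which is exactly the right-hand side.

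\textbf{Main obstacle.} The delicate point is justifying Fubini and the change of variables when $\mu$ is complex or the integrands are not absolutely integrable on $X^n$. I would address this by assuming that $A_iB_j$ is absolutely integrable with respect to $|\mu|$, which is the natural reading of the hypothesis. With that assumption, each term of both expansions is a finite product of absolutely convergent one-dimensional integrals, so all the interchanges above are legitimate.
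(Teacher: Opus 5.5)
Your proof is correct. The Leibniz expansion plus Fubini turns the left side into $\int_{X^n}\det[A_i(x_j)]\prod_j B_j(x_j)\prod_j\rd\mu(x_j)$, and averaging over the block-preserving permutations $\mathcal{S}$ (using the antisymmetry of $\det[A_i(x_j)]$ and the permutation invariance of $\mu^{\otimes n}$) gives exactly the right side; this is the standard argument for this generalized Andreief identity, which the paper does not reprove but quotes from \cite{Liu-Ortiz25}.

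Your ``main obstacle'' is not really an extra assumption, since ``integrable'' already means $A_iB_j\in L^1(|\mu|)$. Every term in the expansion of either integrand is then a product of one-variable $L^1(|\mu|)$ functions, so Tonelli and Fubini justify all the interchanges.
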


We apply \eqref{eq:genAndreiefId} to the $\hat u$-variables in \eqref{eq:rDnlambdasimplified2} to rewrite $\mathrm{D}_{\bn}$ as follows. When $k_i = n_i-n_{i+1} \in \{0,1\}$, $1 \le i \le N$, we obtain
\begin{equation}
\label{eq:rDnlambdasimplified3}
\begin{split}
    \mathrm{D}_{\bn} 
    &= \prod_{i=1}^{N}\frac{(n_i!)^2}{k_i!}  \prod_{i=1}^N \prod_{\hat{\ell}_i=1}^{k_i} \int_{\Sigma_{1,\RR}}  \ddbar{\hat{v}^{(i)}_{\hat{\ell}_i}}{}  \det \left[\int_{\Sigma_{1,\LL}} h_i \left(u, \hat{v}^{(i)}_{\hat \ell_i}\right)  g_i(u) \, \ch\left(\hat{v}^{(j)}_{\hat \ell_j}, u\right) \ddbar{u}{}  \right] _{(i,\hat \ell_i), (j, \hat \ell_j)},
\end{split}
\end{equation}
where the row/column indices are chosen from $\{(i,\hat\ell_i) : 1 \le i \le N, 1 \le \hat \ell_i \le k_i\}$. Recall that, by Lemma \ref{lm:simplifiedcolmult} (ii), $\mathrm{D}_{\bn}=0$ for all other $\bn$. Then, using \eqref{eq:LPPsimple}, \eqref{eq:rDnlambdasimplified3} and noting $|\bn| = \sum _{1\le i\le N} ik_i$, we find that
\begin{equation}
\begin{split}
&\prob \left( \bigcap_{i=1}^N \left\{\ELPP_{\boldsymbol{\lambda}}(m, i) \le t_i\right\}\right)\\
&= \sum_{k_1,\ldots,k_N \in \{0,1\}} (-1)^{\sum _{1\le i\le N} ik_i}\prod_{i=1}^N \prod_{\hat{\ell}_i=1}^{k_i} \int_{\Sigma_{1,\RR}}  \ddbar{\hat{v}^{(i)}_{\hat{\ell}_i}}{}  \det \left[\int_{\Sigma_{1,\LL}} h_i \left(u, \hat{v}^{(i)}_{\hat \ell_i}\right)  g_i(u) \, \ch\left(\hat{v}^{(j)}_{\hat \ell_j}, u\right) \ddbar{u}{}  \right] _{(i,\hat \ell_i), (j, \hat \ell_j)} \\
&= \sum_{\substack{I \subseteq  \{1,\ldots,N \} \\ \text{$I$ is increasing}}} (-1)^{\sum _{i\in I} i}\prod_{i \in I} \int_{\Sigma_{1,\RR}}  \ddbar{\hat v_i}{}  \det \left[\int_{\Sigma_{1,\LL}} h_a \left(u, \hat v_a\right)  g_a(u) \, \ch\left(\hat v_b, u\right) \ddbar{u}{}  \right] _{(a,b) \in I \times I}.
\end{split}
\end{equation}
For notational convenience, define
\begin{equation}
\label{eq:defrS}
\mathrm{S}_i = \begin{dcases}
    \{0,1,\ldots\} , & i = 1, \\
    \{0 \}, & 2 \le i \le N. 
\end{dcases}
\end{equation} 
Therefore, using \eqref{eq:2simplified_hi}, we obtain 
\begin{equation}
\begin{split}
&\prob \left( \bigcap_{i=1}^N \left\{\ELPP_{\boldsymbol{\lambda}}(m, i) \le t_i\right\}\right)\\
&= \sum_{\substack{I \subseteq  \{1,\ldots,N \} \\ \text{$I$ is increasing}}} (-1)^{\sum _{i\in I} i}\prod_{i \in I} \int_{\Sigma_{1,\RR}}  \ddbar{\hat v_i}{} \det \left[\sum_{\ell_a \in \mathrm{S}_a} \tilde h_a^{(\ell_a)} \left(\hat v_a\right) \int_{\Sigma_{1,\LL}} \frac{g_a(u)}{u^{\ell_a+1}}\,  \ch\left(\hat v_b, u\right) \ddbar{u}{}  \right] _{(a,b) \in I \times I} \\
&= \sum_{\substack{I \subseteq  \{1,\ldots,N \} \\ \text{$I$ is increasing}}} (-1)^{\sum _{i\in I} i} \prod_{i \in I} \int_{\Sigma_{1,\RR}}  \ddbar{\hat v_i}{}  \sum_{\ell_i \in \mathrm{S}_i, \, i \in I} \det \left[\tilde h_a^{(\ell_a)} \left(\hat v_a\right) \int_{\Sigma_{1,\LL}} \frac{g_a(u)}{u^{\ell_a+1}} \, \ch\left(\hat v_b, u\right) \ddbar{u}{}  \right] _{(a,b) \in I \times I} \\
&= \sum_{\substack{I \subseteq  \{1,\ldots,N \} \\ \text{$I$ is increasing}}} (-1)^{\sum _{i\in I} i} \prod_{i \in I} \int_{\Sigma_{1,\RR}}  \ddbar{\hat v_i}{}  \sum_{\ell_i \in \mathrm{S}_i, \, i \in I}\det \left[\tilde h_b^{(\ell_b)} \left(\hat v_b\right) \int_{\Sigma_{1,\LL}} \frac{g_a(u)}{u^{\ell_a+1}} \, \ch\left(\hat v_b, u\right) \ddbar{u}{}  \right] _{(a,b) \in I \times I} .
\end{split}
\end{equation}
Now we interchange the integral and the summation. Recalling \eqref{eq:defrS}, note that this interchange only needs to be justified in the case $i=1$. This follows from the dominated convergence theorem, since $\ch(\hat v_1,u)$ is analytic at $\hat v_1=0$, $\tilde h^{(\ell_1)}_1(\hat v_1)=O(\hat v_1^{\ell_1-1})$ near $\hat v_1=0$, and $\Sigma_{1,\RR}$ can be chosen to be a sufficiently small circle around $0$.
Then
\begin{equation*}
\begin{split}
&\prob \left( \bigcap_{i=1}^N \left\{\ELPP_{\boldsymbol{\lambda}}(m, i) \le t_i\right\}\right)\\
&= \sum_{\substack{I \subseteq  \{1,\ldots,N \} \\ \text{$I$ is increasing}}} (-1)^{\sum _{i\in I} i}   \sum_{\substack{\ell_i \in \mathrm{S}_i, \\ i \in I}} \prod_{i \in I} \int_{\Sigma_{1,\RR}}  \ddbar{\hat v_i}{} \det \left[\tilde h_b^{(\ell_b)} \left(\hat v_b\right) \int_{\Sigma_{1,\LL}} \frac{g_a(u)}{u^{\ell_a+1}} \, \ch\left(\hat v_b, u\right) \ddbar{u}{}  \right] _{(a,b) \in I \times I} \\
&= \sum_{\substack{I \subseteq  \{1,\ldots,N \} \\ \text{$I$ is increasing}}} (-1)^{\sum _{i\in I} i}\sum_{\substack{\ell_i \in \mathrm{S}_i, \\ i \in I}}  \det \left[ \int_{\Sigma_{1,\RR}}  \ddbar{\hat v}{} \tilde h_b^{(\ell_b)} \left(\hat v\right) \int_{\Sigma_{1,\LL}} \frac{g_a(u)}{u^{\ell_a+1}} \, \ch\left(\hat v, u\right) \ddbar{u}{}  \right] _{(a,b) \in I \times I} .
\end{split}
\end{equation*}
When $b = 1$ and $\ell_b \ge 1$, the integrand is analytic at $\hat v = 0$, and hence the integral vanishes. Therefore, the determinant is nonzero only when $\ell_i=0$ for all $i$, and hence
\begin{equation}
\begin{split}
&\prob \left( \bigcap_{i=1}^N \left\{\ELPP_{\boldsymbol{\lambda}}(m, i) \le t_i\right\}\right) \\&= \sum_{\substack{I \subseteq  \{1,\ldots,N \} \\ \text{$I$ is increasing}}}  \det \left[ (-1)^b \int_{\Sigma_{1,\RR}}  \ddbar{\hat v}{} \tilde h_b^{(0)} \left(\hat v\right) \int_{\Sigma_{1,\LL}} \frac{g_a(u)}{u} \, \ch\left(\hat v, u\right) \ddbar{u}{}  \right] _{(a,b) \in I \times I} \\
&= \det(\text{I}_N + M),
\end{split}
\end{equation}
where 
\begin{equation*}
\begin{split}
M_{ab} = -\int_{\Sigma_{1,\LL}} \ddbar{u}{} \prod_{j=1}^b \int_{\Sigma^\inn_{j,\RR}} \ddbar{v_{j}}{} \frac{1}{ \prod_{j=1}^{b-1}(v_{j} - v_{j+1})} \cdot \frac{1}{\prod_{j=1}^b G_j(v_j) } \cdot \frac{g_a(u)}{u} \cdot \ch\left(v_1, u\right),
\end{split}
\end{equation*}
where we have set $\hat v = v_1$ for notational convenience and used \eqref{eq:tilde_hi}.

A simple conjugation gives
\begin{equation} \label{eq:simplified_Llambda}
\begin{split}
\prob \left( \bigcap_{i=1}^N \left\{\ELPP_{\boldsymbol{\lambda}}(m, i) \le t_i\right\}\right)  = \det(\rI + K),
\end{split}
\end{equation}
where 
\begin{equation*}
K(u_1,u_2) = -\sum_{\ell=1}^N \frac{g_\ell(u_2)}{u_2}\prod_{j=1}^\ell \int_{\Sigma^\inn_{j,\RR}} \ddbar{v_{j}}{} \frac{1}{ \prod_{j=1}^{\ell-1}(v_{j} - v_{j+1})} \cdot \frac{1}{\prod_{j=1}^\ell G_j(v_j) }  \cdot \ch\left(v_1, u\right).
\end{equation*}

Set $\beta_i=0$, $p_i(v) = (v+1)^{-m-1} e^{t_i v}$, $q_i(u) = \frac{g_i(u)}{u} = u^{i-1}(u+1)^{-m-1} e^{t_i u}$, $f_i(v) = (v+1)^{\lambda_i+1}$, $g_i(u) = u^{-i}(u+1)^{\lambda_i+1}$ for all $1 \le i \le N$, and $H(v,u) := -\ch(v,u)$ in Theorem \ref{prop:main}. Note that this $H(v,u)$ satisfies \eqref{eq:fgHrel}, where we use Theorem \ref{thm:hitting} together with \eqref{eq:def_ftn}.
We find that \eqref{eq:simplified_Llambda} agrees with \eqref{eq:Llambda-schutz}. Hence, the formulas \eqref{eq:Llambda-schutz} and \eqref{eq:liu22mult} are equivalent.

%%%%%%%%%%%%%%%%%%%%%%%%%%%%%%%%%%%%%%%%%%%%%%%
\subsection{Equal-time multipoint distribution formula for TASEP}
\label{subsec:equal-time_path_integral}
The totally asymmetric simple exclusion process (TASEP) is an interacting particle system on the infinite lattice $\intZ$. Each site can contain at most one particle. The system evolves as follows. Each particle is equipped with an independent exponential clock of rate $1$. When a particle's clock rings, the particle attempts to jump to its right neighboring site. The jump is performed if that site is unoccupied; otherwise, the particle remains at its current site.
After each attempt, the clock is reset.

Set
\begin{equation*}
\mathcal{X}_N := \{(x_1, \ldots, x_N) \in \intZ^N : x_1 > \cdots >x_N \}.
\end{equation*}
Let $\rX(t) = (\rx_1(t), \ldots, \rx_N(t)) \in \mathcal{X}_N$ denote the configuration of the TASEP with $N$ particles at time $t \ge 0$, where the particles are labeled from right to left. We recall Sch\"utz's formula for the transition probability of the TASEP. For notational convenience, we use  $\prob_Y\left( \cdots \right) := \prob\left(\cdots  \mid  \rX(0) = Y\right)$. 
\begin{thm}[\cite{Sch97}]
For $X = (x_1,\ldots, x_N)$, $Y = (y_1,\ldots,y_N)$ in $\mathcal{X}_N$, and $t > 0$, the transition probability of TASEP is given by
\begin{equation}
\prob_Y\left( \rX(t) = X \right)
= \det \left[ \oint_{|w|>1} w^{i-j}(w+1)^{-x_i + y_j + j - i - 1} e^{tw}\, \ddbar{w}{} \right]_{i,j=1}^N.
\end{equation}
\end{thm}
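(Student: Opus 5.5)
The plan is the classical Bethe ansatz verification. I would show that the right-hand side solves the Kolmogorov forward (master) equation of TASEP with the correct initial condition, and then appeal to uniqueness. Define, for $n\in\intZ$, $x\in\intZ$ and $t\ge0$,
\begin{equation*}
F_n(x,t):=\oint_{|w|>1} w^{n}(w+1)^{-x-n-1}e^{tw}\,\ddbar{w}{},
\end{equation*}
so that the $(i,j)$ entry of the claimed determinant is $F_{i-j}(x_i-y_j-j+i,\,t)$, up to relabeling the shift. Let $u(t;X)$ denote this determinant, regarded as a function on all of $\intZ^N$ rather than only on $\mathcal{X}_N$. Two elementary identities follow from inserting $w=(w+1)-1$ in the integrand:
\begin{equation*}
\partial_t F_n(x,t)=F_n(x-1,t)-F_n(x,t),\qquad F_{n}(x,t)-F_{n}(x+1,t)=F_{n+1}(x,t)\ (\text{after reindexing}).
\end{equation*}
The first says each entry solves a one-particle free equation in its own variable $x_i$. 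Since row $i$ depends only on $x_i$, multilinearity of the determinant gives the free evolution equation
\begin{equation*}
\partial_t u(t;X)=\sum_{i=1}^N\bigl[u(t;X-e_i)-u(t;X)\bigr]\quad\text{on all of }\intZ^N.
\end{equation*}

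The second step is the boundary condition that converts this free equation into the TASEP master equation on $\mathcal{X}_N$. The required condition is $u(t;X-e_i)=u(t;X)$ whenever $x_{i+1}=x_i-1$. Under it, the fictitious term in which particle $i$ jumps from the forbidden site $x_{i+1}$ cancels exactly the missing loss term for the blocked particle $i+1$. To verify it, I would subtract row $i$ of $u(t;X)$ from row $i$ of $u(t;X-e_i)$ and use the second recurrence. The difference then becomes a determinant whose $i$-th row is $F_{i+1-j}(\cdot)$ evaluated at $x_i-1=x_{i+1}$, which coincides with row $i+1$, so it vanishes. The exponent of $w$ in $w^{i-j}$ is exactly what makes this index shift line up. This is the step I expect to require the most care: the point is to choose the shift conventions in $F_n$ so that the two rows literally coincide.

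The third step is the initial condition $u(0;X)=\delta_{X,Y}$ for $X,Y\in\mathcal{X}_N$. At $t=0$ each entry is a residue at $w=\infty$, and hence at $w=0$ and $w=-1$, of a rational function. Such an entry vanishes when the total degree makes the integrand $O(w^{-2})$ at infinity and it has no pole at $-1$. Using the orderings $x_1>\cdots>x_N$ and $y_1>\cdots>y_N$, I would show that if $x_k<y_k$ for the first differing index $k$, then a block of entries vanishes and the matrix is block triangular with a zero diagonal block. If $x_k>y_k$, the determinant can be reduced to a triangular one with zero diagonal entries. When $X=Y$, the matrix is triangular with diagonal entries $\oint (w+1)^{-1}\,\ddbar{w}{}=1$. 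This case analysis is routine but fiddly.

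Finally, the transition probabilities $\prob_Y(\rX(t)=X)$ satisfy the same master equation with the same initial data. The process started from $Y$ has particles that only move right, and in time $t$ they make finitely many jumps with Poisson tails. The same Poisson-type growth bound for $u$ follows from the contour integral representation, since each entry is bounded by $C\,e^{ct}/(\text{factorial-type})$ in $x_i-y_j$. Uniqueness for the forward equation within this growth class then identifies $u$ with the transition probability. For example, one can use that the generator restricted to configurations dominated by $Y$ is lower triangular with respect to the number of jumps, so the solution is determined recursively.
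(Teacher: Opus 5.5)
The paper does not prove this statement; it quotes it from \cite{Sch97}. Your plan (free equation, two-particle boundary condition, initial condition, uniqueness) is Sch\"utz's original verification argument, and the first three steps are correct once the bookkeeping is fixed. With your $F_n$, the $(i,j)$ entry is exactly $F_{i-j}(x_i-y_j,t)$, and the second identity holds exactly as $F_n(x,t)-F_n(x+1,t)=F_{n+1}(x,t)$. So row $i$ of $u(t;X-e_i)-u(t;X)$ is $F_{i+1-j}(x_i-1-y_j,t)$, which is row $i+1$ when $x_{i+1}=x_i-1$.

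For the initial condition there are two separate vanishing mechanisms, not one joint condition. Entry $(i,j)$ vanishes at $t=0$ when $x_i>y_j$, because the integrand is $O(w^{-2})$. It vanishes for \emph{every} $t$ when $i\ge j$ and $x_i+i<y_j+j$, because then $w^{i-j}(w+1)^{y_j+j-x_i-i-1}e^{tw}$ is entire. If $x_k>y_k$, the first mechanism kills all $(i,j)$ with $i\le k\le j$. If $x_k<y_k$, the second kills all $(i,j)$ with $j\le k\le i$. Either zero block has dimensions summing to $N+1$, so the determinant vanishes. For $X=Y$ the matrix is lower triangular with unit diagonal.

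The step that does not close as written is uniqueness. On the infinite set $\mathcal{X}_N$, the gain terms $u(X-e_i)$ bring information in from the left. So any uniqueness class must control $u$ as configurations move left. The factorial bound you quote concerns $x_i-y_j\to+\infty$. In the other direction the entries with $i<j$ do not decay at all: they are residues at $w=0$ of $w^{i-j}(w+1)^{M}e^{tw}$ with $M\to\infty$.

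Likewise, your recursion over configurations dominated by $Y$ applies to $u$ only if you already know $u(t;X)=0$ for all $t$ whenever $x_k<y_k$ for some $k$. Otherwise, at a configuration with $x_i=y_i$, the equation involves $u(t;X-e_i)$ at a point outside the dominated set, whose value is unknown. The missing fact is exactly the second vanishing mechanism above, which holds for all $t$ and not just $t=0$.

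With it, both $u$ and $\prob_Y(\rX(t)=\cdot)$ are supported on $D_Y=\{X\in\mathcal{X}_N: x_i\ge y_i\ \forall i\}$. The master equation on $D_Y$ is triangular in $\sum_i(x_i-y_i)$, with finitely many states at each level. Induction on that level then identifies the two functions with no growth hypothesis at all. Alternatively, a polynomial bound on the entries as $x_i-y_j\to-\infty$, combined with an iterated Duhamel estimate, gives uniqueness in a polynomial-growth class, but the support argument is simpler.
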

Using Sch\"utz's formula, one can simply derive the following determinantal expression for the equal-time multipoint distribution by performing successive summations. We state the following lemma:

\begin{lm}
\label{lm:determinantal_mult}
Let $Y = (y_1,\ldots,y_N) \in \mathcal{X}_N$, let $m\ge 1$, and let $1 \le k_1 < \cdots < k_{m-1} < k_m = N$ be integers. Moreover, let $a_1,\ldots,a_m$ be integers such that $a_{i-1} + k_{i-1} \ge a_i + k_i$ for all $2 \le i \le m$. Then
\begin{equation}
\label{eq:equal-timemultinAi}
\prob_Y\left( \bigcap_{\ell=1}^m \{\rx_{k_\ell}(t) \ge a_\ell\}\right) = \det \left[ \oint_{|w|>1}  w^{i-j-1}(w+1)^{-A_i + y_j +j}e^{tw} \ddbar{w}{} \right]_{i,j=1}^N,
\end{equation}
where 
\begin{equation}
\label{eq:defAi}
   A_i := a_{\ell} + k_\ell, \quad  \text{and $1 \le \ell \le m$ is such that }{k_{\ell-1} < i \le k_\ell},
\end{equation}
with $k_0:=0$.
\end{lm}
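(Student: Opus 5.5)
The plan is to start from Sch\"utz's formula and sum over the final positions, handling one block of particles at a time. Note first that
\[
\bigcap_{\ell=1}^m \{\rx_{k_\ell}(t)\ge a_\ell\}=\{\rx_i(t)\ge A_i - i \text{ for all } 1\le i\le N\}.
\]
The inclusion ``$\supseteq$'' is immediate. For ``$\subseteq$'', take $k_{\ell-1}<i\le k_\ell$. Since the configuration is strictly decreasing, $\rx_i\ge \rx_{k_\ell}+(k_\ell-i)\ge a_\ell+k_\ell-i=A_i-i$. The hypothesis $a_{i-1}+k_{i-1}\ge a_i+k_i$ makes $A_i$ weakly decreasing, so the constraints $x_i\ge A_i-i$ are compatible with $x_1>\cdots>x_N$.

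The probability therefore equals the sum of Sch\"utz's determinant over $X\in\mathcal{X}_N$ with $x_i\ge A_i-i$. I will carry out the summation in the order $x_1,x_2,\ldots,x_N$, each time summing $x_i$ from $A_i-i$ up to $x_{i-1}-1$ (with $x_0=+\infty$). Only row $i$ of the determinant depends on $x_i$, so each sum passes inside that row's contour integral. The row-$i$ sum is geometric in $(w+1)^{-1}$: for $|w+1|>1$,
\[
\sum_{x=a}^{b}(w+1)^{-x}=\frac{w+1}{w}\Big[(w+1)^{-a}-(w+1)^{-b-1}\Big].
\]
This turns the entry $w^{i-j}(w+1)^{-x_i+y_j+j-i-1}$ into $w^{i-j-1}(w+1)^{y_j+j-i}\big[(w+1)^{-(A_i-i)}-(w+1)^{-x_{i-1}}\big]$.

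The first term inside the bracket gives exactly the target entry $w^{i-j-1}(w+1)^{-A_i+y_j+j}e^{tw}$. For $i\ge2$, the second term gives $w^{i-j-1}(w+1)^{-x_{i-1}+y_j+j-i}e^{tw}$, which is precisely row $i-1$ of the original Sch\"utz matrix, namely $w^{(i-1)-j}(w+1)^{-x_{i-1}+y_j+j-(i-1)-1}$. At the moment row $i$ is summed, row $i-1$ still depends on $x_{i-1}$ and has not yet been summed. So I proceed top-down: sum $x_i$ while row $i-1$ is still in Sch\"utz form, and subtract the proportional row so that the $x_{i-1}$ term disappears. Only then sum $x_{i-1}$. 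This requires reversing the summation order, so I will sum $x_N$ first (over $A_N-N\le x_N\le x_{N-1}-1$), then $x_{N-1}$, and so on, as in the proof of Theorem \ref{thm:LPP-path-to-point}. At each stage the removable term is a multiple of the untouched row above, by a constant independent of $j$, and the determinant is unchanged. For $i=1$ the upper limit is infinite, and the tail vanishes because $|w+1|>1$ on the contour.

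Two points need care, and I regard them as the main obstacle. The first is bookkeeping: the lower limit $A_i-i$ must be $\le x_{i-1}-1$ throughout the region, which is guaranteed by $x_{i-1}\ge A_{i-1}-i+1\ge A_i-i+1$. The second is justifying the interchange of sums with integrals, which holds because convergence is absolute on $|w|=R$ large, where $|w+1|>1$. With these in place the determinant takes the stated form \eqref{eq:equal-timemultinAi}.
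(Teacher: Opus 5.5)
Your proposal is correct and is essentially the paper's proof: the paper also sums Sch\"utz's formula over the region $x_1\ge b_1$, $b_k\le x_k\le x_{k-1}-1$ for a general $B\in\mathcal{X}_N$, invokes linearity of the determinant (the row cancellation you spell out, as in the proof of Theorem~\ref{thm:LPP-path-to-point}), and then sets $b_i=A_i-i$ using the same event identity you verify. The only blemish is expository: you first announce the order $x_1,\ldots,x_N$ and only later switch to the correct innermost-first order $x_N,\ldots,x_1$, so state that order from the outset.
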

\begin{proof}
Let $B = (b_1,\ldots, b_N) \in \mathcal{X}_ N$. We first compute
\begin{equation*}
\begin{split}
\prob_Y\left( \bigcap_{k=1}^N \{\rx_k(t) \ge b_k\}\right)
&= \sum_{x_1=b_1}^\infty \sum_{x_2=b_2}^{x_1-1} \cdots \sum_{x_{N-1} = b_{N-1}}^{x_{N-2}-1} \sum_{x_N = b_N}^{x_{N-1}-1}  \prob_Y(\rX(t) = X) \\
&=\det \left[ \oint_{>1}  w^{i-j-1}(w+1)^{-b_i + y_j +j - i}e^{tw} \ddbar{w}{} \right]_{i,j=1}^N,
\end{split}
\end{equation*}
where we used the linearity of the determinant. Set $b_i = A_i -i$ for $1 \le i \le N$, where $A_i$ is defined in \eqref{eq:defAi}.
Then, we have
\begin{equation*}
 \prob_Y\left( \bigcap_{k=1}^N \{\rx_k(t) \ge b_k\}\right) =    \prob_Y\left( \bigcap_{\ell=1}^m \{\rx_{k_\ell}(t) \ge a_\ell\}\right).
\end{equation*}
Hence, the lemma is proved.
\end{proof}

Recall the definition of the characteristic function in \eqref{eq:homogeneous_characteristic_function}. Since we are considering the particle locations of TASEP, we rewrite it in terms of the initial condition $Y=(y_1,\ldots,y_N)\in\mathcal{X}_N$. Let $(\hat G_k)_{k\ge 0}$ be a geometric random walk with transition probability given by $\prob(\hat G_{k+1} = x | \hat G_k=y) := \frac{1}{2^{y-x}} \1_{x<y}$, and $\tau$ is the hitting time of $\hat G$ to the strict epigraph of $Y$, namely $\tau := \min\{m\ge 0 : \hat G_m > y_{m+1}\}$.
Then  $\ch$ is the TASEP characteristic function of the initial condition $Y$ (see \cite{Liao-Liu25} and the discussions after \eqref{eq:homogeneous_characteristic_function}) 
\begin{equation}
\label{eq:chYprobrep}
    \ch(v,u) := \sum_{z\in \intZ} (2u+2)^z \cdot \mathbb{E}_{\hat G_0 = z} \left[\frac{2}{(2v+2)^{\hat G_{\tau}+1}}\cdot \left(\frac{-v}{v+1}\right)^{\tau} \1_{\tau < N} \right].
\end{equation}

We apply Theorem \ref{prop:main} to obtain the following:
\begin{lm} 
\label{lm:Taround0}
Assume the same notation and conventions in Lemma \ref{lm:determinantal_mult}. Let $\Sigma_{-1}$ be a small circle centered at $-1$. Define the functions $\rF_i$ and $\rf_i$ by
\begin{equation}
\label{eq:defrFrf}
\rF_i(w) = \frac{\rf_i(w) }{\rf_{i-1}(w) }, \qquad \rf_i(w) = w^{k_i}(w+1)^{-a_i-k_i}e^{tw}\text{ with } \rf_0(w) =1 . 
\end{equation}
Then, we have
\begin{equation}
\prob_Y\left( \bigcap_{\ell=1}^m \{\rx_{k_\ell}(t) \ge a_\ell\}\right) = \det(\rI + \rT),
\end{equation}
where $\rT$ is an integral kernel acting on $L^2(\Sigma_{-1},\rd u/2\pi\ri)$ given by
\begin{equation}
\label{eq:Taround0}
\rT(u,\hu) =  \sum_{\ell=1}^m  \oint \ddbar{v_1}{} \cdots \oint \ddbar{v_\ell}{}  \frac{\rf_\ell(\hu) }{v_\ell -\hu} \cdot  \frac{\ch(v_1,u)}{\left[\prod_{i=1}^{\ell-1}(v_{i}-v_{i+1})\right] \left[\prod_{i=1}^{\ell} \rF_i(v_i) \right] }.
\end{equation}
where $\ch(v,u)$ denotes the characteristic function for TASEP defined in \eqref{eq:chYprobrep}. The integration contours in the above equation are nested small circles around the origin such that $|v_1|>\cdots>|v_m|$.
\end{lm}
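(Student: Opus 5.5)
The plan is to apply Theorem \ref{prop:main} directly to the $N\times N$ determinant of Lemma \ref{lm:determinantal_mult}, and then to collapse the resulting $N$-term kernel into the $m$-term kernel $\rT$ by exploiting the block structure of the $A_i$. We take $\beta_\ell=0$ for all $\ell$ and $n=N$. We also split the contour $|w|>1$ into a small circle $\rC$ around $0$ plus the small circle $\Sigma_{-1}$ around $-1$; this is legitimate since the only finite poles of the integrand are at $0$ and $-1$. The $\rC$-part is $U(i,j)=\oint_\rC v^{i-1-j}p_i(v)f_j(v)\ddbar{v}{}$ with
\begin{equation*}
p_i(v)=(v+1)^{-A_i}e^{tv},\qquad f_j(v)=(v+1)^{y_j+j}.
\end{equation*}
Both are analytic and nonvanishing near $0$ and equal $1$ at $v=0$, so the normalization $p_i(\beta_i)=f_i(\beta_i)=1$ holds. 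The $\Sigma_{-1}$-part is $M(i,j)=\int_{\Sigma_{-1}}q_i(u)g_j(u)\ddbar{u}{}$ with
\begin{equation*}
q_i(u)=u^{i-1}(u+1)^{-A_i}e^{tu},\qquad g_j(u)=u^{-j}(u+1)^{y_j+j}.
\end{equation*}
Choosing $\lambda_j=y_j+j-1$ (which is weakly decreasing since $Y\in\mathcal{X}_N$) and $\lbd=0$, $\alpha_i=-1$ in \eqref{eq:def_ftn} gives $\ftn_j=g_j$. Theorem \ref{thm:hitting}, rewritten in the form \eqref{eq:chYprobrep} via $\hat G_k=G_k-k$ and $\cc=-1/2$, then shows that $H=-\ch$ satisfies \eqref{eq:fgHrel}. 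Theorem \ref{prop:main} therefore yields $\det(\rI+K)$ with the kernel \eqref{eq:def_Kernel_K} summed over $\ell=1,\dots,N$.

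The second step simplifies $K$ using the block structure. Since $A_i$ is constant for $k_{r-1}<i\le k_r$, we have $P_i\equiv 1$ except at the block starts $i=k_{r-1}+1$. At a block start, $P_i(v)=(v+1)^{-(A_r-A_{r-1})}$, with $A_0:=0$ so that $P_1=p_1$. Inside a block the factors are $1/(v_{i-1}-v_i)\cdot 1/v_i$ with $P_i=1$, so the $v_i$ integrals there can be evaluated successively as in Section \ref{sec:check}, by deforming the inner contours. After this, an index $\ell$ lying in block $r$ leaves only the $r$ variables attached to the block starts. Their weights accumulate to $v_s^{-(k_s-k_{s-1})}(v_s+1)^{-(A_s-A_{s-1})}$ for the completed blocks $s<r$, and to $v_r^{-(\ell-k_{r-1})}$ times the $(v_r+1)$-factor for the current block. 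Next, for fixed $r$ we sum $q_\ell(u_2)$ over $k_{r-1}<\ell\le k_r$ and apply Lemma \ref{lm:telescope} to $\sum_\ell u_2^{\ell-1}v_r^{-(\ell-k_{r-1})}$. This produces
\begin{equation*}
\frac{1}{u_2-v_r}\left[\frac{u_2^{k_r}}{v_r^{k_r-k_{r-1}}}-u_2^{k_{r-1}}\right].
\end{equation*}
Combined with $e^{tu_2}(u_2+1)^{-A_r}$ and the accumulated $v$-weights, the first term is exactly $\rf_r(\hu)/\prod_{i\le r}\rF_i(v_i)$ times $1/(u_2-v_r)$. Indeed $\rf_r/\rf_{r-1}=\rF_r$ encodes precisely the jump $k_r-k_{r-1}$ in the power of $w$ and $a_r+k_r-a_{r-1}-k_{r-1}=A_r-A_{r-1}$ in the power of $w+1$. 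Finally, the sign change from $H=-\ch$ converts $1/(u_2-v_r)$ into the $1/(v_r-\hu)$ appearing in \eqref{eq:Taround0}.

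The main obstacle is disposing of the leftover term proportional to $u_2^{k_{r-1}}/(u_2-v_r)$ in each block. I expect it to be killed by an analyticity argument: after the factor $v_r^{-(k_r-k_{r-1})}$ is removed, the integrand has no pole at $v_r=0$ inside the innermost contour. The only remaining singularity is at $v_r=v_{r-1}$, and taking that residue should yield an expression that cancels against the $r-1$ term. Alternatively one can choose the $v$-contours relative to $\hu$ so that the telescoping residues vanish, in the same way the $-1$ term was discarded in \eqref{eq:simplified_kernel2}. If this bookkeeping turns out awkward, a fallback is to first rewrite $\det[U+M]$ with row operations within each block, so that Theorem \ref{prop:main} is applied with $P_i$ nonconstant only at block starts from the outset. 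I would also check carefully that the contour ordering $|v_1|>\cdots>|v_m|$, with all $v$-contours small around $0$ and separated from $\Sigma_{-1}$, is compatible with each residue evaluation. The trace-class property needs no extra work: it is inherited from Section \ref{sec:trace_class}.
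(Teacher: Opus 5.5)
Your proposal follows the paper's proof step for step. You make the same choices in Theorem \ref{prop:main}: $\beta_i=0$, $p_i=(v+1)^{-A_i}e^{tv}$, $q_i=u^{i-1}(u+1)^{-A_i}e^{tu}$, $f_i=(v+1)^{y_i+i}$, $g_i=u^{-i}(u+1)^{y_i+i}$ and $H=-\ch$. You also make the same two observations:
\begin{itemize}
\item Since $P_i\equiv 1$ away from block starts, those $v_i$-contours can be pushed to infinity, and each such variable merges into its predecessor.
\item Within block $r$, the geometric sum gives $\frac{1}{\hu-v_r}\bigl[\hu^{k_r}v_r^{-(k_r-k_{r-1})}-\hu^{k_{r-1}}\bigr]$.
\end{itemize}
One small slip: the accumulated integrand factor for a completed block is $1/\rF_s(v_s)=v_s^{-(k_s-k_{s-1})}(v_s+1)^{+(A_s-A_{s-1})}$, with an extra $e^{-tv_1}$ when $s=1$; you wrote the $(v_s+1)$ exponent with the wrong sign.

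The real issue is the leftover term, which you call the main obstacle. The mechanism you propose for it first is wrong. After the collapse, the surviving variables keep the order $|v_1|>\cdots>|v_r|$. So $v_r$ runs over the innermost contour, and the point $v_r=v_{r-1}$ lies outside it. There is no residue there to take, and no cancellation with block $r-1$ is needed. Indeed, the first terms of block $r-1$ are exactly the $\ell=r-1$ summand of $\rT$ and must survive.

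Each leftover integral instead vanishes on its own by Cauchy's theorem. For $r\ge 2$, its $v_r$-dependence is $\frac{1}{\hu-v_r}\cdot\frac{1}{v_{r-1}-v_r}\cdot(v_r+1)^{A_r-A_{r-1}}$. For $r=1$, it is $\frac{1}{\hu-v_1}\,\ch(v_1,u)\,e^{-tv_1}(v_1+1)^{A_1}$. All of these are analytic inside the innermost small circle around $0$, because:
\begin{itemize}
\item $\hu$ lies on $\Sigma_{-1}$;
\item $v_{r-1}$ lies on a larger contour;
\item $\ch(\cdot,u)$ is analytic near $0$.
\end{itemize}
This is exactly the paper's one-line argument (``the second term vanishes due to analyticity at $v_{k_{\ell-1}+1}=0$''). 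Your first clause, and your alternative modeled on dropping the $-1$ term in \eqref{eq:simplified_kernel2}, were reaching for it. With this fix the proof is complete, and the row-operation fallback is unnecessary.
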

\begin{proof}
Recall the equal-time multi-point formula \eqref{eq:equal-timemultinAi}. Splitting $\oint_{|w|>1} = \oint_0 + \oint_{-1}$ and setting $\beta_i=0$, $p_i(v) = (v+1)^{-A_i}e^{tv}$, $q_i(u) = u^{i-1}(u+1)^{-A_i}e^{tu}$, $f_i(v) = (v+1)^{y_i+i}$, $g_i(u) = u^{-i}(u+1)^{y_i+i}$ for all $1 \le i \le N$, and $H(v,u) = -\ch(v,u)$ in Theorem \ref{prop:main}, %and using the relation \eqref{eq:ch_residue}, 
we have
\begin{equation*}
\begin{split}
\rT(u,\hu)
&= -\sum_{j=1}^N \hu^{j-1}(\hu+1)^{-A_j}e^{t\hu} \oint \ddbar{v_1}{}\cdots \oint\ddbar{v_j}{} \frac{\ch(v_1,u)}{\left[\prod_{i=1}^{j-1}(v_i-v_{i+1})\right]\cdot \left[\prod_{i=1}^j v_iP_i(v_i)\right]},
\end{split}
\end{equation*}
where $P_1(v)=p_1(v)$ and $P_i(v)=p_i(v)/p_{i-1}(v)$ for $2\le i\le N$, and the integration contours are nested small circles around the origin satisfying $|v_1|>\cdots>|v_N|$, as in Theorem \ref{prop:main}. Using \eqref{eq:defAi} and the definition of functions $P_i(v)$, we find that
\begin{equation*}
\begin{split}
\rT(u,\hu)
&= -\sum_{\ell=1}^m \sum_{j=k_{\ell-1} + 1}^{k_\ell} \hu^{j-1}(\hu+1)^{-a_\ell-k_\ell}e^{t\hu}\\
& \qquad \cdot \oint \ddbar{v_1}{}\cdots \oint\ddbar{v_j}{} \frac{\ch(v_1,u)e^{-tv_1}}{\left[\prod_{i=1}^{j-1}(v_i-v_{i+1})\right]\cdot \left[\prod_{i=1}^j v_i \cdot \prod_{i=1}^{\ell} h_{i }(v_{k_{i-1}+1})\right] },
\end{split}
\end{equation*}
where we set $a_0=k_0=0$ for convenience, and the function
\begin{equation}
    h_{i }(v):= (v+1)^{a_{i-1}+k_{i-1}-a_i+k_i}\quad \text{for}\quad 1\le i\le m.
\end{equation}
Now we apply the change of variable $j \mapsto j + k_{\ell-1}$:
\begin{equation*}
\begin{split}
\rT(u,\hu)
&= -\sum_{\ell=1}^m \sum_{j=1}^{k_\ell-k_{\ell-1} } \hu^{j+k_{\ell-1} -1}(\hu+1)^{-a_\ell-k_\ell}e^{t\hu} \\
& \qquad \cdot \oint \ddbar{v_1}{}\cdots \oint\ddbar{v_{j+k_{\ell-1} }}{} \frac{\ch(v_1,u)e^{-tv_1}}{\left[\prod_{i=1}^{j+k_{\ell-1} -1}(v_i-v_{i+1})\right]\cdot \left[\prod_{i=1}^{j+k_{\ell-1} } v_i\cdot \prod_{i=1}^{\ell} h_i(v_{k_{i-1}+1})\right] }.
\end{split}
\end{equation*}
Note that only the variables $v_{k_{i-1}+1}$ may have a pole at $-1$ for $1 \le i \le \ell$. The remaining contours can be successively deformed to infinity, collecting residues at $v_{j+1} = v_j$. We obtain
\begin{equation*}
\begin{split}
\rT(u,\hu) &=-\sum_{\ell=1}^m \sum_{j=1}^{k_\ell-k_{\ell-1} } \hu^{j+k_{\ell-1} -1}(\hu+1)^{-a_\ell-k_\ell}e^{t\hu}  \cdot\prod_{i=1}^{\ell}\oint \ddbar{v_{k_{i-1}+1}}{}\\
& \quad \frac{\ch(v_1,u)e^{-tv_1}}{\left[\prod_{i=1}^{\ell-1}(v_{k_{i-1}+1}-v_{k_{i}+1})\right] \left[\prod_{i=1}^{\ell-1 } v^{k_i-k_{i-1}}_{k_{i-1}+1}h_i(v_{k_{i-1}+1}) \right]\left[v_{k_{\ell-1}+1}^j h_{\ell}(v_{k_{\ell-1}+1})\right] }.
\end{split}
\end{equation*}
Summing over $j$, we find that
\begin{equation*}
\begin{split}
\rT(u,\hu) &=-\sum_{\ell=1}^m  \hu^{k_{\ell-1}}(\hu+1)^{-a_\ell-k_\ell}e^{t\hu}  \cdot\prod_{i=1}^{\ell}\oint \ddbar{v_{k_{i-1}+1}}{}  \frac{1}{\hu-v_{k_{\ell-1}+1} } \left(\frac{\hu^{k_{\ell} - k_{\ell-1}}}{v^{k_{\ell} - k_{\ell-1}}_{k_{\ell-1}+1}} - 1\right) \\
& \quad \cdot  \frac{\ch(v_1,u)e^{-tv_1}}{\left[\prod_{i=1}^{\ell-1}(v_{k_{i-1}+1}-v_{k_{i}+1})\right] \left[\prod_{i=1}^{\ell-1 } v^{k_i-k_{i-1}}_{k_{i-1}+1}h_{i}(v_{k_{i-1}+1}) \right]h_\ell(v_{k_{\ell-1}+1}) } \\
&= \sum_{\ell=1}^m  \hu^{k_{\ell}}(\hu+1)^{-a_\ell-k_\ell}e^{t\hu}  \cdot\prod_{i=1}^{\ell}\oint \ddbar{v_{k_{i-1}+1}}{}  \frac{1}{v_{k_{\ell-1}+1} -\hu} \\
& \quad \cdot  \frac{\ch(v_1,u)e^{-tv_1}}{\left[\prod_{i=1}^{\ell-1}(v_{k_{i-1}+1}-v_{k_{i}+1})\right] \left[\prod_{i=1}^{\ell } v^{k_i-k_{i-1}}_{k_{i-1}+1}h_i(v_{k_{i-1}+1}) \right] } \\
&\quad + \sum_{\ell=1}^m  \hu^{k_{\ell-1}}(\hu+1)^{-a_\ell-k_\ell}e^{t\hu}  \cdot\prod_{i=1}^{\ell}\oint \ddbar{v_{k_{i-1}+1}}{}  \frac{1}{\hu-v_{k_{\ell-1}+1} }  \\
& \quad \cdot  \frac{\ch(v_1,u)e^{-tv_1}}{\left[\prod_{i=1}^{\ell-1}(v_{k_{i-1}+1}-v_{k_{i}+1})\right] \left[\prod_{i=1}^{\ell-1 } v^{k_i-k_{i-1}}_{k_{i-1}+1}h_i(v_{k_{i-1}+1}) \right]h_\ell(v_{k_{\ell-1}+1})},
\end{split}
\end{equation*}
where the second term vanishes due to analyticity at $v_{k_{\ell-1}+1}=0$. Renaming the variables $v_{k_{i-1}+1}$ as $v_i$ for $1 \le i \le m$, we obtain \eqref{eq:Taround0}. Hence, the lemma is proved.
\end{proof}
The $v_i$-contours in the formula \eqref{eq:Taround0} for $\rT$ are nested small circles around the origin. We further manipulate the formula for $\rT$ so that the new contours are nested around $-1$; see formula \eqref{eq:Taround-1} below.
\begin{lm}
\label{lm:Taround-1}
Under the assumptions of Lemma \ref{lm:Taround0}, the operator $\rT$ defined in \eqref{eq:Taround0} can be rewritten as follows:
\begin{equation}
\label{eq:Taround-1}
\begin{split}
\rT(u,\hu) &= \sum_{i=1}^{m-1} \oint_0 \ddbar{v}{} \left(\prod_{\ell=i+1}^m \oint\ddbar{u_\ell}{} \right) \frac{\rf_m(\hu)}{\rf_i(v)} \frac{1}{(v-u_{i+1})(\hu-u_m)} \cdot  \frac{\ch(v,u)}{\left[\prod_{\ell=i+1}^{m-1}(u_{\ell+1}-u_{\ell})\right] \left[\prod_{\ell={i+1}}^{m} \rF_\ell(u_\ell) \right] } \\
&\qquad + \oint_0 \ddbar{v}{}  \frac{\rf_{m}(\hu)}{\rf_m(v)} \cdot  \frac{\ch(v,u)}{v -\hu},
\end{split}
\end{equation}
where the contours for $u_2,\ldots,u_m$ are nested small circles around $-1$ such that $|\hu+1|>|u_m+1|>\cdots>|u_2+1|$. The functions $\rf_i$ and $\rF_i$ are defined in \eqref{eq:defrFrf}.
\end{lm}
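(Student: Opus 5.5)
The plan is to start from the expression \eqref{eq:Taround0} and move the contours of $v_2,\ldots,v_\ell$ (every variable except $v_1$, which stays near the origin because of $\ch(v_1,u)$) off the origin. They will be pushed out to infinity, leaving only the contributions from the poles at $-1$ and at $\hu$. Recall $\rF_i(w)=w^{k_i-k_{i-1}}(w+1)^{-(a_i+k_i)+(a_{i-1}+k_{i-1})}$ for $i\ge2$. Hence $1/\rF_i(v_i)$ has a pole of order $k_i-k_{i-1}\ge1$ at $0$, may have a pole at $-1$ when $a_{i-1}+k_{i-1}>a_i+k_i$, and grows at most polynomially; since $\rF_1$ contains $e^{tw}$, the variables $v_i$ with $i\ge2$ carry no exponential factor.

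\textbf{Step 1: the innermost variable in each term.} Fix the $\ell$-th term of \eqref{eq:Taround0}. I would expand the $v$-integrals from the innermost outward, in the spirit of Lemma \ref{lem:change_order_contour}, but with annuli replaced by the two regions around $0$ and around $-1$. Concretely, deform the innermost contour $v_\ell$, a small circle around $0$, into a large circle. This picks up minus the residues at $v_\ell=\hu$ and at $v_\ell=-1$. The integral over the large circle vanishes by decay: the integrand is $O(v_\ell^{-2})$, provided $\deg$ works out, which I will check using $a_{\ell-1}+k_{\ell-1}\ge a_\ell+k_\ell$.

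\textbf{Step 2: iterate and organize.} Next deform $v_{\ell-1}$, then $v_{\ell-2}$, and so on. Each time we pick up a residue at $-1$, or at a previously moved variable now sitting on a circle around $-1$, or the cross term at $v_{i}=v_{i+1}$. I would reorganize by the largest index $i$ whose variable remains at $0$; this index becomes $v$ in \eqref{eq:Taround-1}. The residues at coincident variables $v_{j}=v_{j+1}$ telescope the products of $\rF_j$ into ratios of $\rf$'s, namely $\prod_{j\le i}\rF_j=\rf_i$. This produces the factor $1/\rf_i(v)$ in front of $\ch(v,u)$.

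\textbf{Step 3: sum over $\ell$.} The variables moved to $-1$ are renamed $u_{i+1},\ldots,u_m$. The remaining task is to sum over $\ell$ for each fixed $i$, and I expect the sum over $\ell\ge i$ to telescope. The terms with $\ell<m$ should combine with the residue at $\hu$, via $\frac{1}{v_\ell-\hu}$, to produce the single factor $\frac{\rf_m(\hu)}{\hu-u_m}$ and the chain $\prod(u_{\ell+1}-u_\ell)^{-1}$, much like the telescoping in Lemma \ref{lm:telescope}. The term $i=m$, where nothing moves, gives the last term $\oint_0 \frac{\rf_m(\hu)}{\rf_m(v)}\frac{\ch(v,u)}{v-\hu}$.

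\textbf{Main obstacle.} I expect the hardest part to be this combinatorial bookkeeping in Step 3. It requires matching the residues at $v_\ell=\hu$ from the $\ell$-th term against the integrals near $-1$ from the $(\ell+1)$-th term, so that all $\hu$-dependence collapses to $\rf_m(\hu)$. It also requires confirming the signs and the orientation change, which turns $v_i-v_{i+1}$ into $u_{\ell+1}-u_\ell$. I would first verify the case $m=2$ by hand to pin down the signs before writing the general induction on $m$.
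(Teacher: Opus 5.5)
\textbf{Steps 1--2 are sound.} In Step 1 you also cross the pole at $v_\ell=v_{\ell-1}$, which Step 2 acknowledges. It is best to keep the contributions at $-1$ and at $\hu$ together as one contour around $-1$. Carried out in the order you prescribe ($v_\ell$ first, then $v_{\ell-1},v_{\ell-2},\dots$), these steps reproduce exactly the paper's intermediate formula. That formula is a sum over $1\le i_1<\dots<i_k\le m$ with sign $(-1)^{k-1}$, in which:
\begin{itemize}
\item $v_{i_1}$ stays near $0$, carrying $\ch(v_{i_1},u)/\rf_{i_1}(v_{i_1})$;
\item each block $\{i_{\ell-1}+1,\dots,i_\ell\}$, $\ell\ge2$, has collapsed to a single variable on a contour around $-1$;
\item these contours are nested with lower-indexed blocks outside, and all of them enclose $\hu$.
\end{itemize}
The paper reaches the same expression by first applying Lemma \ref{lem:change_order_contour} around $0$ and then deforming outermost-first; your single pass is equivalent.

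\textbf{The gap is Step 3}, which is where the content of the lemma lies. What remains is not a one-dimensional telescoping in $\ell$ of the kind in Lemma \ref{lm:telescope}. For fixed $i$ and $\ell=i_k>i$ there are $2^{\ell-i-1}$ merge patterns. The nesting is the reverse of that in \eqref{eq:Taround-1}, and $\hu$ lies inside rather than outside the contours. So you cannot simply ``rename the moved variables $u_{i+1},\dots,u_m$''. Matching $\hu$-residues of the $\ell$-th term against the $(\ell+1)$-th term does not by itself account for the merged blocks.

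\textbf{The paper's fix: make $\hu$ the last link of the chain.} Set $u_{m+1}:=\hu$ and write
$\frac{\rf_{i_k}(\hu)}{v_{i_k}-\hu}=-\frac{\rf_m(u_{m+1})}{u_{m+1}-v_{i_k}}\prod_{j=i_k+1}^{m}\rF_j(u_{m+1})^{-1}$.
In words, the skipped indices $i_k+1,\dots,m$ are merged into $u_{m+1}$. Then, for fixed $i=i_1$, the sum over all subsets $\{i_2<\dots<i_k\}\subseteq\{i+1,\dots,m\}$ is exactly the expansion of Lemma \ref{lem:change_order_contour} around $-1$, with $\hu$ as the final, non-integrated variable. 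It therefore collapses to the single nested integral with $|u_{i+1}+1|<\dots<|u_m+1|<|\hu+1|$.

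\textbf{Alternative fix: reverse the order in Step 2.} Your telescoping becomes literally true if you deform $v_2$ first, then $v_3,\dots,v_\ell$.
\begin{itemize}
\item A variable can then merge only into $v_1$: once $v_j$ has gone to $-1$, $v_{j+1}$ is chained to it and not to $v_1$.
\item Each new contour around $-1$ encloses the previous ones and $\hu$.
\item The $\ell$-th term of \eqref{eq:Taround0} becomes $\sum_{i\le\ell}C_{i,\ell}$. Here $C_{i,\ell}$ has sign $(-1)^{\ell-i}$, the factor $\rf_\ell(\hu)/(u_\ell-\hu)$, increasing contours and no internal merges, and $C_{i,i}=\oint_0\frac{\rf_i(\hu)}{\rf_i(v)}\frac{\ch(v,u)}{v-\hu}\ddbar{v}{}$.
\end{itemize}
Now take the $i$-th term of \eqref{eq:Taround-1} and push the $u_m$-contour across $\hu$; the other $u_j$ have no pole at $\hu$ and can follow. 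This yields $C_{i,m}$ plus the contribution at $u_m=\hu$. Since $\rf_m/\rF_m=\rf_{m-1}$, that contribution is the same term with $m$ replaced by $m-1$. Induction gives $\sum_{\ell=i}^m C_{i,\ell}$, and summing over $i$ proves the lemma without using Lemma \ref{lem:change_order_contour} at all.
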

\begin{proof}
For convenience, we introduce a collection of nested simple closed contours around the points $-1$ and $0$. Recall the regions $\Omega_\LL$ and $\Omega_\RR$ defined in \eqref{eq:regions}.
Suppose $\Sigma_{m+1,\LL}^{\out},\ldots,\Sigma_{1,\LL}^{\out},\Sigma_{1,\LL}^{\inn},\ldots,\Sigma_{m+1,\LL}^\inn$ are $2(m+1)$ nested simple closed contours, from outside to inside, in $\Omega_\LL$ enclosing the point $-1$. Similarly,
$\Sigma_{m,\RR}^{\out},\ldots,\Sigma^{\out}_{1,\RR},\Sigma^{\inn}_{1,\RR},\ldots,\Sigma_{m,\RR}^\inn$ are $2m$ nested simple closed contours, from outside to inside, in $\Omega_\RR$ enclosing the point $0$. These contours are all counterclockwise oriented. Furthermore, assume that $\Sigma_{-1}=\Sigma^\out_{m+1,\LL}$.
For convenience, set $u_{m+1} = \hu$. 
Then we need to show that
\begin{equation}
\rT(u,u_{m+1}) = \sum_{i=1}^{m} \oint_0 \ddbar{v}{} \prod_{\ell=i+1}^m \oint_{\Sigma^\out_{\ell,\LL}}\ddbar{u_\ell}{}  \frac{\rf_{m+1}(u_{m+1})}{\rf_i(v)} \frac{1}{(v-u_{i+1})} \cdot  \frac{\ch(v,u)}{\left[\prod_{\ell=i+1}^{m}(u_{\ell+1}-u_{\ell})\right] \left[\prod_{\ell={i+1}}^{m+1} \rF_\ell(u_\ell) \right]}. 
\end{equation}
Applying Lemma \ref{lem:change_order_contour}, it is enough to show that
\begin{equation}
\label{eq:Tuum+1}
\begin{split}
\rT(u,u_{m+1})
&= \sum_{i=1}^{m}  \oint_0 \ddbar{v}{} \sum_{k=1}^{m-i+1} \sum_{i+1 \le i_1 < \cdots <i_{k} = m +1} \prod_{\ell=1}^{k-1} \oint_{\Sigma^\inn_{i_\ell,\LL}}\ddbar{u_{i_\ell}}{} \\
&\qquad \frac{\rf_{i_k}(u_{i_k})}{\rf_i(v)} \frac{1}{(v-u_{i_1})} \cdot  \frac{\ch(v,u)}{\left[\prod_{\ell=1}^{k-1}(u_{i_{\ell+1}}-u_{i_{\ell}})\right] \left[\prod_{\ell={1}}^{k} \prod_{j=i_{\ell-1}+1}^{i_\ell}\rF_j(u_{i_{\ell}}) \right] },
\end{split}
\end{equation}
where $u_{m+1} \in \Sigma^{\inn}_{m+1, \LL}$, and we set $i_0=i$.

\medskip 

Now consider the expression for $\rT$ in \eqref{eq:Taround0}. In this formula, we may assume that $v_\ell \in \Sigma^\inn_{\ell,\RR}$. We further assume that $\hu  \in \Sigma^{\inn}_{m+1, \LL}$. Applying Lemma \ref{lem:change_order_contour} once again, we find that
\begin{equation*}
\begin{split}
\rT(u,\hu) 
&= \sum_{i=1}^m \prod_{\ell=1}^i  \oint_{\Sigma^\inn_{\ell,\RR}}\ddbar{v_\ell}{}  \frac{\rf_i(\hu) }{v_i -\hu} \cdot  \frac{\ch(v_1,u)}{\left[\prod_{\ell=1}^{i-1}(v_{\ell}-v_{\ell+1})\right] \left[\prod_{\ell=1}^{i} \rF_\ell(v_\ell) \right] } \\
&= \sum_{i=1}^m \sum_{k=1}^i \sum_{1\le i_1 < \cdots < i_k = i} \prod_{\ell=1}^k \oint_{\Sigma^\out_{i_\ell,\RR}} \ddbar{v_{i_\ell}}{}  \frac{\rf_i(\hu) }{v_i - \hu} \cdot  \frac{\ch(v_{i_1},u)}{\left[\prod_{\ell=1}^{k-1}(v_{i_\ell}-v_{i_{\ell+1}})\right] \left[\prod_{\ell=1}^{k} \prod_{j=i_{\ell-1}+1}^{i_\ell} \rF_j(v_{i_\ell}) \right] } \\
&= \sum_{k=1}^m \sum_{1\le i_1 < \cdots < i_k \le m} \prod_{\ell=1}^k \oint_{\Sigma^\out_{i_\ell,\RR}} \ddbar{v_{i_\ell}}{}  \frac{\rf_{i_k}(\hu) }{v_{i_k} - \hu} \cdot  \frac{\ch(v_{i_1},u)}{\left[\prod_{\ell=1}^{k-1}(v_{i_\ell}-v_{i_{\ell+1}})\right] \left[\prod_{\ell=1}^{k} \prod_{j=i_{\ell-1}+1}^{i_\ell} \rF_j(v_{i_\ell}) \right] },
\end{split}
\end{equation*}
where we set $i_0=0$ for convenience.

\medskip 

Recall the definition of $\rF_i$ from \eqref{eq:defrFrf}, along with the assumptions that $k_{i-1} < k_i$ and $a_{i-1}+k_{i-1} \ge a_i+k_i$ for every $2 \le i \le m$. For $\ell \ne 1$, note that the only poles in the variable $v_{i_\ell}$ are at $-1$ and $0$, and that the integrand decays at least as $O(v_{i_\ell}^{-2})$. By deforming the contours ${\Sigma^\out_{i_k,\RR}}, \ldots, {\Sigma^\out_{i_2,\RR}}$ to ${\Sigma^\inn_{i_k,\LL}}, \ldots, {\Sigma^\inn_{i_2,\LL}}$, respectively, in that order, we obtain
\begin{equation*}
\begin{split}
&\rT(u,\hu) \\
&= \sum_{k=1}^m  \sum_{1\le i_1 < \cdots < i_k \le m} (-1)^{k-1} \oint_0 \ddbar{v_{i_1}}{} \prod_{\ell=2}^k \oint_{\Sigma^\inn_{i_\ell,\LL}} \ddbar{v_{i_\ell}}{}   \frac{\rf_{i_k}(\hu) }{v_{i_k} -\hu} \cdot  \frac{\ch(v_{i_1},u)}{\left[\prod_{\ell=1}^{k-1}(v_{i_\ell}-v_{i_{\ell+1}})\right] \left[\prod_{\ell=1}^{k} \prod_{j=i_{\ell-1}+1}^{i_\ell} \rF_j(v_{i_\ell}) \right] } 
\end{split}
\end{equation*}
with $i_0=0$.
We now split the sum over the indices $i_\ell$ as follows:
\begin{equation*}
\begin{split}
\rT(u,\hu) &= \sum_{k=1}^m \sum_{i_1=1}^{m-k+1} \oint_0 \ddbar{v_{i_1}}{} \sum_{i_1+1 \le i_2 < \cdots < i_k \le m}   \prod_{\ell=2}^k \oint_{\Sigma^\inn_{i_\ell,\LL}} \ddbar{v_{i_\ell}}{}   \\
& \quad \frac{\rf_{i_k}(\hu) }{\rf_{i_1}(v_{i_1})} \cdot  \frac{1 }{v_{i_k}-\hu}\frac{\ch(v_{i_1},u)}{\left[\prod_{\ell=1}^{k-1}(v_{i_{\ell+1}}-v_{i_{\ell}})\right] \left[\prod_{\ell=2}^{k} \prod_{j=i_{\ell-1}+1}^{i_\ell} \rF_j(v_{i_\ell}) \right] }. 
\end{split}
\end{equation*}
For convenience, we now set $i_{k+1}=m+1$ and $v_{m+1}=\hu$. Then we have
\begin{equation}
\label{eq:Tuvm+1}
\begin{split}
\rT(u,v_{m+1})
&=  - \sum_{k=1}^m \sum_{i_1=1}^{m-k+1} \oint_0 \ddbar{v_{i_1}}{}  \sum_{i_1+1 \le i_2 < \cdots < i_k<i_{k+1} = m+1}   \prod_{\ell=2}^k \oint_{\Sigma^\inn_{i_\ell,\LL}} \ddbar{v_{i_\ell}}{}   \\
& \quad \frac{\rf_{i_k}(v_{i_{k+1}})}{\rf_{i_1}(v_{i_1})} \cdot  \frac{\ch(v_{i_1},u)}{\left[\prod_{\ell=1}^{k}(v_{i_{\ell+1}}-v_{i_{\ell}})\right] \left[\prod_{\ell=2}^{k} \prod_{j=i_{\ell-1}+1}^{i_\ell} \rF_j(v_{i_\ell}) \right] } \\
&= -\sum_{i_1=1}^{m} \sum_{k=1}^{m-i_1+1}\oint_0 \ddbar{v_{i_1}}{}  \sum_{i_1+1 \le i_2 < \cdots < i_k<i_{k+1} = m+1}   \prod_{\ell=2}^k \oint_{\Sigma^\inn_{i_\ell,\LL}} \ddbar{v_{i_\ell}}{}   \\
& \quad \frac{\rf_{i_{k+1}}(v_{i_{k+1}})}{\rf_{i_1}(v_{i_1})} \cdot  \frac{\ch(v_{i_1},u)}{\left[\prod_{\ell=1}^{k}(v_{i_{\ell+1}}-v_{i_{\ell}})\right] \left[\prod_{\ell=2}^{k+1} \prod_{j=i_{\ell-1}+1}^{i_\ell} \rF_j(v_{i_\ell}) \right] },
\end{split}
\end{equation}
where we have interchanged the sums and used the identity $\prod_{j=i_k+1}^{i_{k+1}}\rF_j(v_{i_{k+1}}) = \rf_{i_{k+1}}(v_{i_{k+1}}) / \rf_{i_{k}}(v_{i_{k+1}})$. Recall $i_1 = i$. Relabeling $i_1$ as $i$ and $v_{i_1}$ as $v$, and relabeling
$(i_2,\ldots,i_{k+1})$ as $(i_1,\ldots,i_k)$ and
$(v_{i_2},\ldots,v_{i_{k+1}})$ as $(u_{i_1},\ldots,u_{i_k})$,
we see that \eqref{eq:Tuvm+1} is equal to \eqref{eq:Tuum+1}.
Hence, the lemma is proved.
\end{proof}

Recall that Lemma \ref{lm:Taround-1} holds when $a_{i-1}+k_{i-1} \ge a_i+k_i$ for all $2 \le i \le m$. We now show that it extends to arbitrary integers $a_1,\ldots,a_m$.

\begin{lm}
Let $m \ge 1$, and $1 \le k_1<\cdots < k_{m-1} < k_m = N$ be $m$ integers. For $a_1,\ldots,a_m \in \intZ$, we have   
\begin{equation} \label{eq:equaltimeTaround-1}
\prob_Y\left( \bigcap_{\ell=1}^m \{\rx_{k_\ell}(t) \ge a_\ell\}\right) = \det\left(\rI + \rT \right)_{L^2(\Sigma_{-1}, \rd u/2\pi \ri)},
\end{equation}
where $\rT$ is given in \eqref{eq:Taround-1}.
\end{lm}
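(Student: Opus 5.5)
The plan is to remove the ordering constraint $a_{i-1}+k_{i-1}\ge a_i+k_i$ by an analytic-continuation / polynomiality argument in the parameters, combined with the observation that the events themselves reduce to the ordered case. First I would note the probabilistic reduction. Since $\rx_{k_{i-1}}(t)>\rx_{k_i}(t)$ always, with $\rx_{k_{i-1}}(t)\ge \rx_{k_i}(t)+k_i-k_{i-1}$, the constraint $\rx_{k_i}(t)\ge a_i$ already forces $\rx_{k_{i-1}}(t)\ge a_i+k_i-k_{i-1}$. Hence if $a_{i-1}+k_{i-1}<a_i+k_i$, the event $\{\rx_{k_{i-1}}(t)\ge a_{i-1}\}$ is implied and can be dropped. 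Iterating, one may reduce to a sub-collection of indices satisfying the ordering hypothesis, or equivalently replace $a_{i-1}$ by $\max(a_{i-1}, a_i+k_i-k_{i-1})$ working from $i=m$ downward. Either way the left side of \eqref{eq:equaltimeTaround-1} equals the left side of an ordered instance, to which Lemma \ref{lm:determinantal_mult}, Lemma \ref{lm:Taround0} and Lemma \ref{lm:Taround-1} apply.

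So it suffices to show that $\det(\rI+\rT)$ with $\rT$ from \eqref{eq:Taround-1} is unchanged under the same modification. I would prove this one index at a time. Assume $a_{i-1}+k_{i-1}<a_i+k_i$ for a single $i$, and let $\tilde a$ denote the modified parameters, so that $\tilde a_{i-1}+k_{i-1}=a_i+k_i$. The cleaner route is to remove index $i-1$ entirely, i.e.\ compare with the data $(k_\ell,a_\ell)_{\ell\ne i-1}$.

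In \eqref{eq:Taround-1}, the parameter $a_{i-1}$ enters only through $\rf_{i-1}$, and hence through $\rF_{i-1}$ and $\rF_i$. Consider first the terms with $i-1\ge$ the summation index $j+1$, so that $u_{i-1},u_i$ are both integrated. Here the factor $\rF_{i-1}(u_{i-1})\rF_i(u_i)$ with $u_{i-1}$ on the inner contour has $u_{i-1}$-integrand $1/\rF_{i-1}(u_{i-1})=u_{i-1}^{-(k_{i-1}-k_{i-2})}(u_{i-1}+1)^{(a_{i-1}+k_{i-1})-(a_{i-2}+k_{i-2})}$ up to analytic factors. When $a_{i-1}+k_{i-1}$ is large enough relative to its neighbours this has no pole at $-1$, so the $u_{i-1}$-integral picks up only the residue at $u_{i-1}=u_i$ (or $u_{i-1}=u_{i-2}$). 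That residue merges $\rF_{i-1}\rF_i$ into $\rf_i/\rf_{i-2}$, which is exactly the corresponding term for the data with index $i-1$ removed. The terms with summation index $j=i-1$ (where $v$ carries $\rf_{i-1}(v)$) should combine with the $j=i-2$ terms in the same way, after evaluating the residue at $v=u_i$ or moving the $v$ contour; this is the same type of computation as in the proof of Lemma \ref{lm:Taround-1}.

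The main obstacle is this bookkeeping step. A cleaner alternative I would try first avoids it. Observe that the matrix in \eqref{eq:equal-timemultinAi} and the right side of \eqref{eq:equaltimeTaround-1} are both well defined for arbitrary $a_\ell$. Moreover, the passage from the determinant to $\det(\rI+\rT)$ via Theorem \ref{prop:main}, Lemma \ref{lm:Taround0} and the contour manipulations of Lemma \ref{lm:Taround-1} used the ordering only to control poles at $-1$ when deforming contours. With that in mind, I would redo Lemma \ref{lm:Taround0} keeping the $\Sigma^\inn_{i_\ell,\LL}$ residue contributions explicitly. Combined with the identity that the determinant in \eqref{eq:equal-timemultinAi} is invariant under the above replacement of $a_{i-1}$ (a row operation, as in the proof of Theorem \ref{thm:LPP-path-to-point}), this would conclude the proof.
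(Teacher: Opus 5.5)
Your probabilistic reduction, the plan to match $\rT$ in \eqref{eq:Taround-1} with the kernel for the data with index $i-1$ deleted, and the one-violation-at-a-time induction are exactly the paper's architecture. The gap is in the step that actually proves the kernel identity, which you set up with the wrong variable.

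You analyze the $u_{i-1}$-integral through $1/\rF_{i-1}(u_{i-1})\propto (u_{i-1}+1)^{(a_{i-1}+k_{i-1})-(a_{i-2}+k_{i-2})}$. Regularity of this at $-1$ requires $a_{i-1}+k_{i-1}\ge a_{i-2}+k_{i-2}$, and your hypothesis says nothing about that; it only says $a_{i-1}+k_{i-1}$ is \emph{small} relative to its right neighbour. Moreover, a residue at $u_{i-1}=u_{i-2}$ would merge $\rF_{i-2}\rF_{i-1}$, which deletes index $i-2$, not $i-1$.

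What $a_{i-1}+k_{i-1}<a_i+k_i$ really gives is that $1/\rF_i(u)=u^{k_{i-1}-k_i}(u+1)^{(a_i+k_i)-(a_{i-1}+k_{i-1})}$ is analytic at $u=-1$. So the argument should run as follows.
\begin{itemize}
\item In each term with summation index $j\le i-2$, shrink the $u_i$-contour inside the $u_{i-1}$-contour. The only contribution is the residue at $u_i=u_{i-1}$. It turns $\frac{1}{(u_{i+1}-u_i)(u_i-u_{i-1})\rF_{i-1}(u_{i-1})\rF_i(u_i)}$ into $\frac{\rf_{i-2}(u_{i-1})}{(u_{i+1}-u_{i-1})\rf_i(u_{i-1})}$ (with $u_{m+1}:=\hu$), which is exactly the reduced kernel's factor. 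The integral over the innermost contour vanishes because nothing singular is left inside it.
\item The term with $j=i-1$ does not ``combine'' with anything. Its $u_i$-integrand $\frac{1}{(v-u_i)(u_{i+1}-u_i)\rF_i(u_i)}$ is analytic inside the small circle around $-1$ (recall $v$ is near $0$), so this term is zero.
\item Terms with $j\ge i$ do not contain $a_{i-1}$ and are only re-indexed.
\end{itemize}

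Your fallback route also fails. Lemma \ref{lm:determinantal_mult} genuinely needs the ordering, and raising $a_{i-1}$ is not a row operation on the matrix in \eqref{eq:equal-timemultinAi}. Take $N=m=2$, $k_1=1$, $k_2=2$, $t=0$, $Y=(5,0)$, $a_1=0$, $a_2=5$. Then $\prob_Y(\rx_1(0)\ge 0,\ \rx_2(0)\ge 5)=0$. However, the matrix in \eqref{eq:equal-timemultinAi} has rows $(1,1)$ and $(1,0)$, so its determinant is $-1$; it becomes $0$ only after $a_1$ is raised to $6$.

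In addition, the proof of Lemma \ref{lm:Taround-1} uses the ordering to get the $O(v^{-2})$ decay that permits the contour deformations, not merely to control poles at $-1$. So the unordered determinant is not a usable starting point. The statement has to be obtained, as in the paper, from the ordered case via the direct kernel reduction above, together with induction on the number of indices with $a_{i-1}+k_{i-1}<a_i+k_i$.
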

\begin{proof}
Let
$I := \{2 \le i \le m : a_{i-1} + k_{i-1} < a_i + k_i\}$.
The lemma follows by induction on $|I|$. When $|I| = 0$, we have
$a_{i-1} + k_{i-1} \ge a_i + k_i$ for all $2 \le i \le m$. In this case,
\eqref{eq:equaltimeTaround-1} follows from Lemmas \ref{lm:Taround0} and
\ref{lm:Taround-1}.

\medskip
Assume that \eqref{eq:equaltimeTaround-1} holds whenever $|I| = n$, for some $n \ge 0$. We now prove it for the case $|I| = n+1$. Let $2 \le j \le m$ be such that $a_{j-1} + k_{j-1} < a_j + k_j$.

Set $(\hat{a}_1,\ldots, \hat{a}_{m-1}) = (a_1, \ldots, a_{j-2},a_j,\ldots ,a_m)$ and $(\hat{k}_1,\ldots, \hat{k}_{m-1}) = (k_1, \ldots, k_{j-2},k_j,\ldots ,k_m)$. We have
\begin{equation}
\label{eq:distred}
\begin{split}
\prob_Y\left( \bigcap_{\ell=1}^m \{\rx_{k_\ell}(t) \ge a_\ell\}\right) 
= \prob_Y\left( \bigcap_{\ell=1}^{m-1} \{\rx_{\hat{k}_\ell}(t) \ge \hat{a}_\ell\}\right) 
\end{split}
\end{equation}
Recall the formula for $\rF_j$ from \eqref{eq:defrFrf}, and note that the function $1/\rF_j(u)$ is analytic at $u=-1$. We deform the contour for $u_j$ so that it becomes the innermost contour. In doing so, we pick up the residue at $u_j=u_{j-1}$. Then \eqref{eq:Taround-1} becomes
\begin{equation}
\label{eq:Tred}
\begin{split}
\rT(u,\hu) &= \sum_{i=1}^{m-2} \oint_0 \ddbar{v}{} \left(\prod_{\ell=i+1}^{m-1} \oint\ddbar{w_\ell}{} \right) \frac{\hat{\rf}_{m-1}(\hu)}{\hat{\rf}_i(v)} \frac{1}{(v-w_{i+1})(\hu-w_{m-1})} \cdot  \frac{\ch(v,u)}{\left[\prod_{\ell=i+1}^{m-2}(w_{\ell+1}-w_{\ell})\right] \left[\prod_{\ell={i+1}}^{m-1} \hat{\rF}_\ell(w_\ell) \right] } \\
&\qquad + \oint \ddbar{v}{}  \frac{\hat{\rf}_{m-1}(\hu)}{\hat{\rf}_{m-1}(v)} \cdot  \frac{\ch(v,u)}{v -\hu},
\end{split}
\end{equation}
where $(w_2,\ldots,w_{m-1})=(u_2,\ldots,u_{j-2},u_j,\ldots,u_m)$,
$(\hat{\rf}_1,\ldots,\hat{\rf}_{m-1})=(\rf_1,\ldots,\rf_{j-2},\rf_j,\ldots,\rf_m)$,
and $\hat{\rF}_i(w)=\frac{\hat{\rf}_i(w)}{\hat{\rf}_{i-1}(w)}$, with $\hat{\rf}_0(w)=1$. By the induction hypothesis, the expressions in \eqref{eq:distred} and \eqref{eq:Tred} are equal. Hence, the lemma follows.
\end{proof}

%%%%%%%%%%%%%%%%%%%%%%%%%%%%%%%%%%%%%%%%%%%%%%%%%%%%%%%%%%%%
\subsubsection{Equivalence with the path integral formula of \cite{Matetski-Quastel-Remenik21}}

In the next section, we show that the formula $\eqref{eq:equaltimeTaround-1}$ for the equal-time multi-point distribution of TASEP agrees with the path-integral formula of \cite{Matetski-Quastel-Remenik21}. Although the proof is similar to the analogous argument for the KPZ fixed point in \cite{Liao-Liu25}, we include most of the calculations since the calculations of the TASEP formulas are of their own interest. 

\begin{prop}[Theorem 2.6 of \cite{Matetski-Quastel-Remenik21}] \label{prop:mqrpath}
Let $m \ge 1$, and $1 \le k_1 < \cdots < k_{m-1} < k_m = N$ be $m$ integers. For $a_1,\ldots,a_m \in \intZ$, we have 
\begin{equation}
\label{eq:pathintform}
\prob_Y\left( \bigcap_{\ell=1}^m \{\rx_{k_\ell}(t) \ge a_\ell\}\right) = \det\left(\rI - K_t^{(k_m)} + K_t^{(k_m)}Q^{k_1-k_m}\1_{\ge a_1}Q^{k_2-k_1}\1_{\ge a_2} \cdots Q^{k_m-k_{m-1}}\1_{\ge a_m} \right)_{L^2(\intZ)},
\end{equation}
where $K^{(n)}_t$ and $Q^m$ are defined below.
\end{prop}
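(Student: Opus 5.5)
The plan is to prove Proposition \ref{prop:mqrpath} by showing that its right-hand side equals $\det(\rI+\rT)_{L^2(\Sigma_{-1},\rd u/2\pi\ri)}$ with $\rT$ as in \eqref{eq:Taround-1}. Then \eqref{eq:equaltimeTaround-1} gives the result. This follows the strategy of \cite{Liao-Liu25} for the KPZ fixed point, transplanted to discrete TASEP.

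First I would recall the contour-integral representations from \cite{Matetski-Quastel-Remenik21}:
\[
Q^{n}(x,y)=\oint_{\Sigma_{-1}}\frac{(w+1)^{\,y-x-1}}{(2w+2)^{\,\cdots}}\,\ddbar{w}{}\quad\text{(suitably normalized)},\qquad K_t^{(N)}=\sum_{k} \Psi_k\otimes\Phi_k .
\]
Here $\Psi_k(x)$ is an integral around $0$ of $w^{k}(w+1)^{-x-k-1}e^{tw}$-type kernels. The functions $\Phi_k$ are the biorthogonal functions characterized by $\Phi_k(x)=\oint_0 \frac{(\cdots)}{(2v+2)^{\cdots}}\,\ddbar{v}{}$ together with a hitting expectation of the walk $\hat G$. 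The key observation is that $K_t^{(N)}$ has rank $N$ and factors through $\ell^2(\intZ)\to\complexC^N\to\ell^2(\intZ)$. It therefore also factors through $L^2$ of the contours around $0$ and $-1$ via generating functions $x\mapsto (2u+2)^x$.

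Second, I would write the operator inside the determinant in \eqref{eq:pathintform} as $\rI-K_t^{(N)}(\rI-Q^{k_1-k_m}\1_{\ge a_1}\cdots Q^{k_m-k_{m-1}}\1_{\ge a_m})$. I would then apply $\det(\rI+AB)=\det(\rI+BA)$, moving the $x$-summations to the inside. The $\Phi_k$ summed against $(2u+2)^x$ should reproduce the characteristic function \eqref{eq:chYprobrep}. This is where the random walk hitting expectation enters: the MQR biorthogonal functions are exactly the $\hat G$-hitting formulas, and summing over $x$ yields $\ch(v,u)$.

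Third, I would compute the products $Q^{k_{i}-k_{i-1}}\1_{\ge a_i}$ in the generating-function picture. Each indicator $\1_{\ge a_i}$ sandwiched between two $Q$-operators becomes a geometric sum in $x$. Using Lemma \ref{lm:telescope}, this sum gives a Cauchy factor $1/(u_{\ell+1}-u_\ell)$ together with the ratio $\rF_\ell(u_\ell)^{-1}$. The nested $u_\ell$ contours around $-1$ in \eqref{eq:Taround-1} then arise, with the ordering $|u_{\ell+1}+1|>|u_\ell+1|$ dictated by convergence of the geometric sums. The term with $i$ levels of indicators matches the $i$-th summand of \eqref{eq:Taround-1}. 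The $-K_t^{(N)}$ piece corresponds to the final term $\oint_0 \frac{\rf_m(\hu)}{\rf_m(v)}\frac{\ch(v,u)}{v-\hu}$, possibly after a telescoping cancellation between consecutive $i$.

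The main obstacle is the bookkeeping in this third step. The difficulty is to get the signs right, to track which part of each geometric sum survives (the $\1_{\ge a}$ cut produces a boundary term while the other end vanishes by decay), and to control the contour orderings so that each residue is picked up correctly. To handle this, I would first verify the case $m=1$ and then $m=2$ explicitly, and then run an induction on $m$ mirroring the reduction used in the proof of \eqref{eq:equaltimeTaround-1}. If needed, a final conjugation by $(2\hu+2)^{\pm\cdot}$ would match the normalizations.
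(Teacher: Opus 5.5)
Your overall architecture matches the paper's. You show that the operator in \eqref{eq:pathintform} gives $\det(\rI+\rT)$ with $\rT$ from \eqref{eq:Taround-1}, using contour-integral forms of $Q^{k}$, the characteristic function \eqref{eq:chYprobrep}, Cauchy factors from geometric sums, and $\det(\rI+AB)=\det(\rI+BA)$, and you then invoke \eqref{eq:equaltimeTaround-1}. Biorthogonal functions are not needed. With $K_t^{(N)}=\mathcal{S}^*_{-t,-N}\bar{\mathcal{S}}^{\mathrm{epi}(Y)}_{-t,N}$ as in the statement, $\ch(v,u)$ appears when you sum over the starting point $y$ of $\hat G$ in $\sum_y\mathcal{S}_{-t,-N}(y,x_1)\bar{\mathcal{S}}^{\mathrm{epi}(Y)}_{-t,N}(y,x_2)$. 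However, the step you defer as bookkeeping is where the one real idea is needed, and as you describe it, it fails.

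\emph{The direct geometric sum diverges.} Take the junction $K_t^{(N)}Q^{k_1-k_m}\1_{\ge a_1}Q^{k_2-k_1}$.
\begin{itemize}
\item In $K_t^{(N)}Q^{k_1-N}(x_1,y)$ the $y$-dependence is $(2(v+1))^{y}$, with $v$ on a contour around $0$.
\item $Q^{k_2-k_1}(y,x_2)$ carries $(2(u_2+1))^{-y}$, with $u_2$ on a contour around $-1$ that must exclude the pole at $0$.
\item The $v$-contour crosses $(0,\infty)$ and the $u_2$-contour crosses $(-1,0)$, so $|(v+1)/(u_2+1)|<1$ cannot hold for all pairs.
\end{itemize}
So the sum over $y\ge a_1$ diverges: there is no decay at $+\infty$ at this junction. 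For instance, with $k_1=1$, $k_2=N=2$ one gets $K_t^{(N)}Q^{-1}(x_1,y)=c(x_1)2^{y}$ while $Q^{1}(y,x_2)=2^{x_2-y}$ for $y>x_2$, so the summand does not depend on $y$.

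\emph{The term-by-term matching is also off.} The unexpanded operator has only two pieces: $-K_t^{(N)}$ and one product containing all $m$ indicators. Nothing in it matches the $m$ summands of $\rT$ term by term, and $-K_t^{(N)}$ itself does not match the last summand.

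\emph{The missing device.} Write $\1_{\ge a_i}=1-\1_{<a_i}$ for $i=1,\ldots,m$ in turn, and use the semigroup property $Q^{k}Q^{l}=Q^{k+l}$, including negative powers. The bare $-K_t^{(N)}$ then cancels, and you get \eqref{eq:2tonexpansion}:
$-K_t^{(N)}\1_{<a_m}-\sum_{i=1}^{m-1}K_t^{(N)}Q^{k_i-k_m}\1_{<a_i}Q^{k_{i+1}-k_i}\1_{\ge a_{i+1}}\cdots Q^{k_m-k_{m-1}}\1_{\ge a_m}$.
In this form every sum converges and the matching works:
\begin{itemize}
\item The sum over $y<a_i$ converges for $|v+1|>|u_{i+1}+1|$ and produces $1/(v-u_{i+1})$.
\item The sums over $y\ge a_\ell$ between two $Q$'s converge on nested contours around $-1$ and produce $1/(u_{\ell+1}-u_\ell)$ and $\rF_\ell(u_\ell)^{-1}$.
\item The $i$-th term matches the $i$-th summand of \eqref{eq:Taround-1}, and $-K_t^{(N)}\1_{<a_m}$ (not $-K_t^{(N)}$) matches the last one.
\item In the final Sylvester step, expanding $1/(v-\hu)$ over $x<0$ and $1/(\hu-u_m)$ over $x\ge 0$ reproduces exactly the indicators $\1_{x_2<a_m}$ and $\1_{x_2\ge a_m}$ after the shift by $a_m$.
\end{itemize}

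\emph{The proposed induction does not work.} Your induction on $m$ cannot mirror the proof of \eqref{eq:equaltimeTaround-1}. That reduction only drops a constraint implied by another one (when $a_{j-1}+k_{j-1}<a_j+k_j$), so it gives no inductive step for general parameters. The paper instead handles all $m$ at once from the expansion above.
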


Recall the geometric random walk $(\hat G_k)_{k\ge 0}$ introduced at the beginning of this subsection (see the discussions before Lemma \ref{lm:Taround0}). The definition of $K^{(n)}_t$ is given by (\cite[Theorem 2.6]{Matetski-Quastel-Remenik21}):
\begin{equation}
\label{eq:defKtn}
K^{(n)}_t= \mathcal{S}^*_{-t,-n} \bar{\mathcal{S}}^{\mathrm{epi}(Y)}_{-t, n},
\end{equation}
where
\begin{equation}
\label{eq:defmcSmcSbar}
\begin{split}
\mathcal{S}_{-t,-n}(x_1,x_2) = \oint_0 \ddbar{v}{} \frac{(1-v)^n e^{t(v - 1/2)} }{2^{x_2-x_1}v^{n+1+x_2-x_1}}, \quad
\bar{\mathcal{S}}_{-t,n}(x_1,x_2) =  \oint_0 \ddbar{v}{}  \frac{(1-v)^{x_2-x_1+n-1} e^{t(v - 1/2)} }{2^{x_1-x_2}v^{n}},
\end{split}
\end{equation}
and
\begin{equation}
\label{eq:SepiY}
\bar{\mathcal{S}}^{\mathrm{epi}(Y)}_{-t, n}(x_1,x_2) = \E_{\hat G_0=x_1} \left[ \bar{\mathcal{S}}_{-t,n-\tau}(\hat G_\tau,x_2) \1_{\tau < n} \right].
\end{equation}
For $m \ge 1$, $Q^m$ is defined as
\begin{equation}
\label{eq:Qm-mdef}
\begin{split}
Q^{m}(x,y) := \frac{1}{2^{x-y}}\binom{x-y-1}{m-1} \1_{x-y\ge m}, \quad
Q^{-m}(x,y) := (-1)^{y-x+m}2^{y-x}\binom{m}{y-x} \1_{y-x\le m}.
\end{split}
\end{equation}
We now express the path-integral kernel in terms of contour integrals. The result is summarized in the following proposition. The proof is similar to that of Proposition 6.4 in \cite{Liao-Liu25}.
\begin{prop}
Assume the same notation and conventions as in Proposition \ref{prop:mqrpath}. Let
\begin{equation*}
\rS := - K_t^{(k_m)} + K_t^{(k_m)}Q^{k_1-k_m}\1_{\ge a_1}Q^{k_2-k_1}\1_{\ge a_2} \cdots Q^{k_m-k_{m-1}}\1_{\ge a_m}.
\end{equation*}
Then we have the following contour integral representation for the kernel of $\rS$:
\begin{equation}
\label{eq:kernelScontour}
\begin{split}
\rS(x_1,x_2) &=  \oint \ddbar{u}{} \oint_0 \ddbar{v}{} \frac{2^{-x_1} (u+1)^{-x_1+a_m-1} }{2^{-x_2}(v+1)^{-x_2+a_m}} \cdot \frac{\rf_m(u)}{\rf_m(v)}\cdot \ch(v,u) \, \1_{x_2<a_m} \\
&\quad + \sum_{i=1}^{m-1}  \oint \ddbar{u}{} \oint_0 \ddbar{v}{} \left(\prod_{\ell={i+1}}^m\oint \ddbar{u_\ell}{}\right)  \frac{2^{-x_1} (u+1)^{-x_1+a_m-1}}{2^{-x_2} (u_m+1)^{-x_2+a_m}} \frac{\rf_m(u)}{ \rf_i(v)} \cdot \ch(v,u) \\
&\qquad \cdot \frac{1}{(v-u_{i+1})\prod_{\ell=i+1}^{m-1}(u_{\ell+1}-u_\ell)} \frac{1}{\prod_{\ell=i+1}^{m}\rF_\ell(u_\ell)} \, \1_{x_2\ge a_m}.
\end{split}
\end{equation}
where the contours for $u,u_2,\ldots,u_m$ are nested small circles around $-1$ such that $|u+1|>|u_m+1|>\cdots>|u_2+1|$. The functions $\rf_i$ and $\rF_i$ are defined in \eqref{eq:defrFrf}.
\end{prop}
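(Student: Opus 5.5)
We will compute the kernel of $\rS$ one factor at a time, writing each operator as a contour integral and collapsing the compositions with geometric sums (Lemma \ref{lm:telescope}). The plan follows the proof of Proposition 6.4 in \cite{Liao-Liu25}.

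\textbf{Step 1: contour forms of the building blocks.} We first write $Q^{m}$ and $Q^{-m}$ in \eqref{eq:Qm-mdef} as contour integrals around $-1$. Explicitly,
\[
Q^{m}(x,y)=\oint_{-1}\ddbar{w}{}\,\frac{2^{y-x}}{w^{m}(w+1)^{x-y-m+1}}\cdot(\text{normalization}),
\]
with analogous formulas for $Q^{-m}$ (a polynomial integrand, so only the pole at $-1$ matters). One checks that $Q^{k}Q^{l}=Q^{k+l}$ at the level of integrands, since the $x$-sum is geometric. Next we rewrite $\mathcal{S}_{-t,-n}$ and $\bar{\mathcal{S}}_{-t,n}$ from \eqref{eq:defmcSmcSbar} using the substitution $v\mapsto$ an affine image, so that the factor $(1-v)$ becomes $(u+1)$ or $(v+1)$ in the variables of the paper. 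After this, $e^{tw}$, $w^{k}$ and $(w+1)^{-a-k}$ assemble into $\rf_\ell$.

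\textbf{Step 2: identify $\ch$ inside $K_t^{(k_m)}$.} Insert \eqref{eq:SepiY} into \eqref{eq:defKtn} and sum over the intermediate lattice variable. The factor $\prod(u+1)^{\cdot}$ coming from $\mathcal{S}^*$, combined with the factor $(2v+2)^{-\hat G_\tau-1}(-v/(v+1))^{\tau}$ coming from $\bar{\mathcal{S}}_{-t,k_m-\tau}(\hat G_\tau,\cdot)$, produces exactly the summand of \eqref{eq:chYprobrep}. Hence
\[
K_t^{(k_m)}(x_1,x_2)=\oint\ddbar{u}{}\oint_0\ddbar{v}{}\,\frac{2^{-x_1}(u+1)^{-x_1-1}}{2^{-x_2}(v+1)^{-x_2}}\,\frac{e^{tu}u^{N}}{e^{tv}v^{N}}\,\ch(v,u),
\]
up to the precise power shifts. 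Absolute convergence of the $z$-sum is needed to interchange summation and integration; it follows from the choice of small circles, as in Section \ref{sec:well-definedness-hitting}.

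\textbf{Step 3: the path part.} We then compose $K_t^{(k_m)}Q^{k_1-k_m}\1_{\ge a_1}Q^{k_2-k_1}\cdots\1_{\ge a_m}$ successively. Each projection $\1_{\ge a_\ell}$ followed by $Q^{k_{\ell+1}-k_\ell}$ is a sum over $y\ge a_\ell$ of a geometric term. Lemma \ref{lm:telescope} evaluates it as
\[
\frac{1}{u_{\ell+1}-u_\ell}\,\Bigl(\frac{u_\ell+1}{u_{\ell+1}+1}\Bigr)^{a_\ell}
\]
times the integrands, which introduces a new variable $u_{\ell+1}$ and the Cauchy-type factor $(u_{\ell+1}-u_\ell)^{-1}$. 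The contours must be nested around $-1$ with $|u_{\ell+1}+1|>|u_\ell+1|$, which forces the ordering stated in the proposition. Each such sum gives two terms: one boundary term, and one that ``skips'' index $\ell$, the latter arising from the pole $u_\ell=u_{\ell+1}$ or the pole at $v$. Organizing all terms by the first index $i$ at which the path attaches to the $v$-variable gives the $i$-sum with the factor $1/(v-u_{i+1})$ and $\rf_m(u)/\rf_i(v)$. The products of powers then assemble into $\prod\rF_\ell(u_\ell)^{-1}$.

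\textbf{Step 4: the indicator split.} Finally, subtract $K_t^{(k_m)}$. The full sums over $x_2$ cancel against $-K_t^{(k_m)}$ on $\{x_2\ge a_m\}$ when collapsing the last step $\1_{\ge a_m}$, which leaves the first term restricted to $x_2<a_m$ and the path terms restricted to $x_2\ge a_m$.

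I expect the main obstacle to be Step 3. The hard part is bookkeeping the residues generated when each geometric sum is collapsed, together with keeping the signs and nesting of the contours consistent, so that the result matches the $i$-indexed form exactly. I would handle this by induction on $m$, mirroring the structure of Lemma \ref{lm:Taround-1}.
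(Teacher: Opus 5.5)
There is a genuine gap in Step~3, at the first link of the chain. After $K_t^{(k_m)}Q^{k_1-k_m}$, the $y$-dependence is $2^{y}(v+1)^{y}$, with $v$ on a small circle about $0$. The factor $Q^{k_2-k_1}(y,\cdot)$ contributes $2^{-y}(u_2+1)^{-y}$, with $u_2$ on a small circle about $-1$. So the sum over $y\ge a_1$ is a geometric series with ratio $(v+1)/(u_2+1)$ of modulus $>1$. It diverges, and Lemma~\ref{lm:telescope} cannot be applied.

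Re-nesting does not cure this. The $v$-contour must surround $0$ (the pole of $1/\rf_1$), so $\max|v+1|>1$. A contour around $-1$ on which $|u_2+1|>\max|v+1|$ must then also surround $0$, where $1/\rF_2(u_2)$ has a pole of order $k_2-k_1$.

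Your account of where the $i$-sum comes from is also not what happens. You describe each geometric sum as producing a boundary term plus a ``skip'' term from a pole. In fact, for $\ell\ge 2$ the links $\1_{\ge a_\ell}$ between $u_\ell$ and $u_{\ell+1}$ converge under the nesting. Each gives a single nested integral with the factor $1/(u_{\ell+1}-u_\ell)$, and no extra terms.

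The missing idea is an operator-level telescoping, done before any contour computation. Write $\1_{\ge a_i}=1-\1_{<a_i}$, and absorb the $1$ using the semigroup property $Q^{k_i-k_m}Q^{k_{i+1}-k_i}=Q^{k_{i+1}-k_m}$. This is a finite sum, since negative powers of $Q$ are finitely supported. Iterating from $i=1$ to $m$, and using $K_t^{(k_m)}\1_{\ge a_m}-K_t^{(k_m)}=-K_t^{(k_m)}\1_{<a_m}$, gives
\[
\rS=-K_t^{(k_m)}\1_{<a_m}-\sum_{i=1}^{m-1}K_t^{(k_m)}Q^{k_i-k_m}\1_{<a_i}Q^{k_{i+1}-k_i}\1_{\ge a_{i+1}}\cdots Q^{k_m-k_{m-1}}\1_{\ge a_m}.
\]
Each term now has exactly one $v$--$u$ link, namely a sum over $y<a_i$. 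It converges precisely because $|u_{i+1}+1|<|v+1|$, and it yields $1/(v-u_{i+1})$.

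The factor $K_t^{(k_m)}Q^{k_i-k_m}$ just shifts the powers $v^{-N}(v+1)^{x_2+N}$ to $v^{-k_i}(v+1)^{x_2+k_i}$, up to a sign, which produces $\rf_i(v)$. The first term is directly the $\1_{x_2<a_m}$ term. With this decomposition, your Steps~1--2 and the $u$-chain computation finish the proof, with no residue bookkeeping or induction.

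Without it, Step~3 does not go through, and Step~4 has nothing to cancel against. Note also that in Step~4, $x_2$ is the free output variable, not a summation variable.
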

\begin{proof}
Note that, writing $\1_{\ge a_1} = 1 - \1_{< a_1}$, we have
\begin{equation*}
\begin{split}
&K_t^{(k_m)}Q^{k_1-k_m}\1_{\ge a_1}Q^{k_2-k_1}\1_{\ge a_2} \cdots Q^{k_m-k_{m-1}}\1_{\ge a_m} \\
&= K_t^{(k_m)}Q^{k_2-k_m}\1_{\ge a_2} \cdots Q^{k_m-k_{m-1}}\1_{\ge a_m} - K_t^{(k_m)}Q^{k_1-k_m}\1_{< a_1}Q^{k_2-k_1}\1_{\ge a_2} \cdots Q^{k_m-k_{m-1}}\1_{\ge a_m},
\end{split}
\end{equation*}
where we used the semigroup property of $Q^{m}$, namely that $Q^m Q^n = Q^{m+n}$, $m,n \in \intZ$. 

Repeating this argument for $K_t^{(k_m)}Q^{k_i-k_m}\1_{\ge a_i} \cdots Q^{k_m-k_{m-1}}\1_{\ge a_m}$ with $i=2,\ldots, m$, we see that
\begin{equation}
\label{eq:2tonexpansion}
\begin{split}
&- K_t^{(k_m)} +K_t^{(k_m)}Q^{k_1-k_m}\1_{\ge a_1}Q^{k_2-k_1}\1_{\ge a_2} \cdots Q^{k_m-k_{m-1}}\1_{\ge a_m} \\
&=  -K_t^{(k_m)} \1_{< a_m} - \sum_{i=1}^{m-1} K_t^{(k_m)}Q^{k_i-k_m}\1_{< a_i}Q^{k_{i+1}-k_i}\1_{\ge a_{i+1}} \cdots Q^{k_m-k_{m-1}}\1_{\ge a_m}.
\end{split}
\end{equation}
From the definition \eqref{eq:defmcSmcSbar}, we have
\begin{equation}
\begin{split}
\mathcal{S}_{-t,-n}(x_1,x_2) = \oint_{-1} \ddbar{u}{} \frac{(-u)^n e^{t(u + 1/2)} }{2^{x_2-x_1}(u+1)^{n+1+x_2-x_1}},\quad
\bar{\mathcal{S}}_{-t,n}(x_1,x_2) =  -\oint_0 \ddbar{v}{}  \frac{(v+1)^{x_2-x_1+n-1} e^{-t(v + 1/2)} }{2^{x_1-x_2}(-v)^{n}}.
\end{split}
\end{equation}
Using \eqref{eq:defKtn} and \eqref{eq:chYprobrep}, we find that
\begin{equation*}
\begin{split}
K^{(N)}_t (x_1,x_2)
&= \sum_{y \in \intZ} \mathcal{S}_{-t,-N}(y,x_1)  \bar{\mathcal{S}}^{\mathrm{epi}(Y)}_{-t, N}(y,x_2) \\
&= -\sum_{y \in \intZ} \oint_{-1} \ddbar{u}{} \frac{(-u)^N e^{t(u + 1/2)} }{2^{x_1-y}(u+1)^{N+1+x_1-y}} \cdot  \E_{\hat G_0=y} \left[ \oint_0 \ddbar{v}{}  \frac{(v+1)^{x_2-\hat G_\tau+N-\tau-1} e^{-t(v + 1/2)} }{2^{\hat G_\tau-x_2}(-v)^{N-\tau}} \1_{\tau<N} \right] \\
&= - \oint_{-1} \ddbar{u}{} \oint_0 \ddbar{v}{} \frac{2^{-x_1}u^N (u+1)^{-x_1-N-1}e^{tu} }{2^{-x_2}v^{N}(v+1)^{-x_2-N} e^{tv}} \cdot \ch(v,u).
\end{split}
\end{equation*}
Note that $k_i< k_m=N$. Using \eqref{eq:Qm-mdef}, we have
\begin{equation}
\label{eq:KtNQexp}
\begin{split}
K^{(N)}_t Q^{k_i-N}(x_1,x_2) &= \sum_{y\in \intZ}K^{(N)}_t(x_1,y)Q^{k_i-N}(y,x_2) \\
&=(-1)^{N-k_i+1}\oint_{-1} \ddbar{u}{} \oint_0 \ddbar{v}{} \frac{2^{-x_1}u^N (u+1)^{-x_1-N-1}e^{tu} }{2^{-x_2}v^{k_i}(v+1)^{-x_2-k_i} e^{tv}} \cdot \ch(v,u) \\
&= (-1)^{N-k_i+1}\oint_{-1} \ddbar{u}{} \oint_0 \ddbar{v}{} \frac{2^{-x_1}(u+1)^{-x_1+a_m-1} \rf_m(u)}{2^{-x_2}(v+1)^{-x_2+a_i}  \rf_i(v)} \cdot \ch(v,u)
\end{split}
\end{equation}
for $1 \le i \le m-1$, where $\rf_i$ is defined in \eqref{eq:defrFrf}. On the other hand, for $m \ge 1$, the kernel $Q^{m}$ in \eqref{eq:Qm-mdef} can be expressed as
\begin{equation*}
\begin{split}
Q^{m}(x,y) 
&= \frac{(-1)^{m}}{2^{x-y}}\oint_{-1} \frac{(u+1)^{m-x+y-1}}{u^m} \ddbar{u}{}.
\end{split}
\end{equation*}
Thus, the convolution $Q^{k_2-k_1}\1_{\ge a_2}Q^{k_3-k_2}$ has the following kernel:
\begin{equation*}
\begin{split}
&Q^{k_2-k_1}\1_{\ge a_2}Q^{k_3-k_2}(x_1,x_2) \\
&= \frac{(-1)^{k_3-k_1}}{2^{x_1-x_2}}\oint_{-1} \ddbar{u_2}{} \sum_{y= a_2}^\infty \oint_{-1}\ddbar{u_3}{}  \frac{(u_2+1)^{k_2-k_1-x_1+y-1}}{u_2^{k_2-k_1}}  \frac{(u_3+1)^{k_3-k_2-y+x_2-1}}{u_3^{k_3-k_2}}  \\
&=  \frac{(-1)^{k_3-k_1}}{2^{x_1-x_2}}\oint_{-1} \ddbar{u_2}{} \oint_{-1}\ddbar{u_3}{}  \frac{(u_2+1)^{k_2-k_1-x_1+a_2-1}}{u_2^{k_2-k_1}}  \frac{(u_3+1)^{k_3-k_2-a_2+x_2}}{u_3^{k_3-k_2}} \frac{1}{u_3-u_2} \\
&= \frac{(-1)^{k_3-k_1}}{2^{x_1-x_2}}\oint_{-1} \ddbar{u_2}{} \oint_{-1}\ddbar{u_3}{}   \frac{(u_2+1)^{a_1-x_1-1}(u_3+1)^{x_2-a_3}}{u_3-u_2} \frac{1}{\rF_2(u_2)\rF_3(u_3)},
\end{split}
\end{equation*}
where $|u_2+1| < |u_3+1|$ and $\rF_i$ is defined in \eqref{eq:defrFrf}. Similarly, for any $1 \le i \le m-1$ we have
\begin{equation}
\label{eq:Q1Qconv}
\begin{split}
&Q^{k_{i+1}-k_i}\1_{\ge a_{i+1}} \cdots 1_{\ge a_{m-1}}Q^{k_m-k_{m-1}}(x_1,x_2) \\
&= \frac{(-1)^{N-k_i}}{2^{x_1-x_2}} \left(\prod_{\ell={i+1}}^m\oint_{-1} \ddbar{u_\ell}{} \right) \frac{(u_{i+1}+1)^{a_i-x_1-1}(u_m+1)^{x_2-a_m}}{\prod_{\ell=i+1}^{m-1}(u_{\ell+1}-u_\ell)} \frac{1}{\prod_{\ell=i+1}^{m}\rF_\ell(u_\ell)},
\end{split}
\end{equation}
where $|u_2+1| < \cdots < |u_m+1|$. Without loss of generality, assume that $|v+1| > |u_m+1|$. Using \eqref{eq:KtNQexp} and \eqref{eq:Q1Qconv}, we have
\begin{equation}
\label{eq:KtNQ1Q}
\begin{split}
&K^{(N)}_t Q^{k_i-N}\1_{< a_{i}}Q^{k_{i+1}-k_i}\1_{\ge a_{i+1}} \cdots 1_{\ge a_{m-1}}Q^{k_m-k_{m-1}}(x_1,x_2) \\
&=- \oint_{-1} \ddbar{u}{} \oint_0 \ddbar{v}{}\sum_{y=-\infty}^{a_i-1} \left(\prod_{\ell={i+1}}^m\oint_{-1} \ddbar{u_\ell}{}\right)  \frac{2^{-x_1} (u+1)^{-x_1+a_m-1} \rf_m(u)}{2^{-y}(v+1)^{-y+a_i} \rf_i(v)} \cdot \ch(v,u) \\
&\qquad \cdot \frac{1}{2^{y-x_2}}\frac{(u_{i+1}+1)^{a_i-y-1}(u_m+1)^{x_2-a_m}}{\prod_{\ell=i+1}^{m-1}(u_{\ell+1}-u_\ell)} \frac{1}{\prod_{\ell=i+1}^{m}\rF_\ell(u_\ell)} \\
&=- \oint_{-1} \ddbar{u}{} \oint_0 \ddbar{v}{} \left(\prod_{\ell={i+1}}^m\oint_{-1} \ddbar{u_\ell}{}\right)  \frac{2^{-x_1} (u+1)^{-x_1+a_m-1}}{2^{-x_2} (u_m+1)^{-x_2+a_m}} \frac{\rf_m(u)}{ \rf_i(v)} \cdot \ch(v,u) \\
&\qquad \cdot \frac{1}{(v-u_{i+1})\prod_{\ell=i+1}^{m-1}(u_{\ell+1}-u_\ell)} \frac{1}{\prod_{\ell=i+1}^{m}\rF_\ell(u_\ell)}.
\end{split}
\end{equation}
Combining \eqref{eq:2tonexpansion} and \eqref{eq:KtNQ1Q}, we obtain the contour representation \eqref{eq:kernelScontour} for $\rS$.
\end{proof}

Finally, we establish the equivalence by appropriately rewriting $\rT$ in \eqref{eq:Taround-1} to match $\rS$ in \eqref{eq:kernelScontour}. We denote the $m$ terms on the right-hand side of \eqref{eq:Taround-1} as $\rT_1+\cdots+\rT_m$. Using
the simple series expansion (for $v$ sufficiently close to the origin, and $|\hat u+1|>|u_m+1|$)
\begin{equation*}
\frac{1}{v-\hu}=\sum_{x=-\infty}^{-1} \frac{(\hu+1)^{-x-1}}{(v+1)^{-x}}, \qquad \frac{1}{\hu-u_m}=\sum_{x=0}^\infty \frac{(\hu+1)^{-x-1}}{(u_m+1)^{-x}},
\end{equation*}
we can write
\begin{equation*}
\rT_i(u,\hu) = \sum_{x \in \intZ} A_i(u,x) B(x,\hu),
\end{equation*}
where
\begin{equation*}
\begin{split}
&A_i(u,x_1) \\
&= 
\frac{\1_{x_1\ge0}}{2^{-x_1}}\oint_0 \ddbar{v}{} \left(\prod_{\ell=i+1}^m \oint\ddbar{u_\ell}{} \right) \frac{1}{\rf_i(v)} \frac{1}{(v-u_{i+1})(u_m+1)^{-x_1}} \cdot  \frac{\ch(v,u)}{\left[\prod_{\ell=i+1}^{m-1}(u_{\ell+1}-u_{\ell})\right] \left[\prod_{\ell={i+1}}^{m} \rF_\ell(u_\ell) \right] }, 
\end{split}
\end{equation*}
for $1\le i\le m-1$, and 
\begin{equation*}
\begin{split}
    A_m(u,x_1)&=\frac{\1_{x_1<0}}{2^{-x_1}}\oint_0 \ddbar{v}{}  \frac{\ch(v,u)}{\rf_m(v)(v+1)^{-x_1}},\\
    B(x_2,\hu)& = 2^{-x_2}\rf_{m}(\hu)(\hu+1)^{-x_2-1}.
\end{split}
\end{equation*}
By Sylvester's identity, we have
\begin{equation*}
\det(\rI + \rT) = \det\left(\rI + \left[\sum_{i=1}^mA_i\right]B\right) = \det\left(\rI + B\left[\sum_{i=1}^mA_i\right]\right) = \det\left(\rI + \sum_{i=1}^m\tilde \rS_i\right),
\end{equation*}
where $\sum_{i=1}^m\tilde \rS_i (x_1,x_2) = \rS(x_1+a_m, x_2+a_m)$. Hence, the equal-time multipoint distribution formula \eqref{eq:equaltimeTaround-1} for TASEP agrees with the path-integral formula \eqref{eq:pathintform} of \cite{Matetski-Quastel-Remenik21}.

\def\cydot{\leavevmode\raise.4ex\hbox{.}}

\end{document}